\newtheorem{theorem}{Theorem}[section] 
\newtheorem{lemma}[theorem]{Lemma}     
\newtheorem{corollary}[theorem]{Corollary}
\newtheorem{proposition}[theorem]{Proposition}
\newenvironment{proofof}[1]{\normalsize {\noindent \it Proof of #1}:}{{\hfill $\Box$}}
\newcommand{\R}{{\mathbb R}}
\newcommand{\N}{{\mathbb N}}
\newcommand{\C}{{\mathbb C}}
\newcommand{\CG}{{\mathbb C}G}
\newcommand{\Factors}{{\rm Fact}}
\newcommand{\FF}{{\rm F}}
\newcommand{\PP}{\mathcal{P}}
\newcommand{\D}{\mathcal{D}}
\newcommand{\LD}{{\rm LD}}
\newcommand{\RD}{{\rm RD}}
\newcommand{\lexle}{<_{\rm lex}}
\newcommand{\sgn}{{\rm Sign}}
\newcommand{\p}[1]{{\rm pre}[#1]}
\newcommand{\s}[1]{{\rm suf}[#1]}
\newcommand{\f}[1]{{\rm f}[#1]}
\renewcommand{\l}[1]{{\rm l}[#1]}
\newcommand{\DA}[1]{\mathrm{DA}(#1)}
\newenvironment{mylist}{\begin{list}{}{
\setlength{\parskip}{0mm}
\setlength{\topsep}{1mm}
\setlength{\parsep}{0mm}
\setlength{\itemsep}{0.5mm}
\setlength{\labelwidth}{7mm}
\setlength{\labelsep}{3mm}
\setlength{\itemindent}{0mm}
\setlength{\leftmargin}{12mm}
\setlength{\listparindent}{6mm}
}}{\end{list}}
\title{Rapid decay and Baum-Connes for large type Artin groups}
\author{Laura Ciobanu, Derek F. Holt and Sarah Rees}
\date{}
\begin{document}
\maketitle
\begin{abstract}
We prove that many Artin groups of large type satisfy the rapid decay
property, including all those of extra-large type. For many of these,
including all 3-generator groups of extra-large type, a result of
Lafforgue applies to show that the groups satisfy the Baum-Connes
conjecture without coefficients. 

Our proof of rapid decay combines elementary analysis with
combinatorial techniques, and relies on properties of geodesic
words in Artin groups of large type that were observed in \cite{HoltRees}
by two of the authors of this current article.\\

\noindent 2000 Mathematics Subject Classification: 20E06, 43A15, 46L99.

\noindent Key words: rapid decay, Artin groups
\end{abstract}

\section{Introduction}
\label{intro}
An Artin group $G$ is defined to be a finitely generated group with
presentation \[ \langle x_1,\ldots ,x_n \mid {}_{m_{ij}}(x_i,x_j)=
{}_{m_{ij}}(x_j,x_i)\quad\hbox{\rm for each}\quad i \neq j\quad\hbox{with}\quad m_{ij}< \infty \rangle, \]
where the integers $m_{ij}$ are the entries in a Coxeter matrix
(a symmetric $n \times n$ matrix over
$\N \cup \{\infty\}$, with $m_{ii}=1$, and $m_{ij} \geq 2$ for all $i \neq j$),
and,
for generators $x,y$ and $m \in \N$, the word ${}_m(x,y)$ is defined to be the
product of $m$ alternating $x$'s and $y$'s that starts with $x$. 
The associated Coxeter group is defined by adding the relations $x_i^2=1$
for all $i$.

A Coxeter matrix can be specified using an undirected labelled graph $\Gamma$
with $n$ vertices and an edge labelled $m_{ij}$ between distinct vertices $i$
and $j$ whenever $m_{ij} < \infty$, but no edge between $i$ and $j$ if
$m_{ij}$ is infinite. (We note that an alternative and widely used convention
deletes instead edges labelled 2 and leaves those with infinite labels.) We
denote the Artin group defined in this way by $G(\Gamma)$.

The Artin group
$G$ is said to be of {\em large type} if $m_{ij} \ge 3$ for all $i \neq j$,
and of {\em extra-large type} if $m_{ij} \ge 4$ for all $i \ne j$.
The concepts of large and extra-large type for Artin groups were introduced
by Appel and Schupp
\cite{AppelSchupp,Appel}, who used small cancellation arguments to prove 
solubility of the word and conjugacy problems, the embedding of parabolic
subgroups, the freeness of the subgroups $\langle x_i^2 \mid 1 \leq i \leq n
\rangle$, and more, for these groups.
An Artin group is said to be of {\em dihedral type} if 2-generated, of
{\em spherical type} if the associated Coxeter group is finite, or of {\em
right-angled type} if
$m_{ij} \in \{2,\infty\}$ for all $i,j$.

The purpose of this paper is to prove the following two results.

\begin{theorem}\label{main_rd}
Let $G=G(\Gamma)$ be an Artin group of large type for which $\Gamma$ has no
triangles with edge labels $3,3,m$ with $3 \le m < \infty$.
Then $G$ satisfies the rapid decay property.
In particular, all Artin groups of extra-large type satisfy this property.
\end{theorem}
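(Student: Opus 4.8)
The plan is to verify property RD directly, via the standard reformulation as a family of Haagerup-type inequalities, feeding into it the structural description of geodesic words in large-type Artin groups from \cite{HoltRees}. Write $X=\{x_1,\dots,x_n\}$ for the generators, $|\cdot|$ for the associated word length, $S_r=\{g\in G:|g|=r\}$ for the spheres, and $\lambda$ for the left regular representation on $\ell^2(G)$. It suffices to produce a polynomial $P$ with $\|\lambda(f)\|\le P(r)\,\|f\|_2$ for every $r$ and every $f\in\C G$ supported on $S_r$; and by the usual orthogonality bookkeeping — decomposing an arbitrary $g\in\ell^2(G)$ over spheres, grouping the spheres into $O(r)$ classes so that within each class the convolutions $f*g_s$ have pairwise disjoint supports, and observing that $f*g_s$ meets only $2\min(r,s)+1$ spheres — this reduces to an estimate $\|(f*g)|_{S_t}\|_2\le P(r)\,\|f\|_2\,\|g\|_2$ for $f$ supported on $S_r$ and $g$ on $S_s$. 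Since $(f*g)|_{S_t}(w)=\sum_{u}f(u)\,g(u^{-1}w)$, the sum over $u\in S_r$ with $u^{-1}w\in S_s$, everything rests on understanding, for $w\in S_t$, the factorisations $w=uv$ with $|u|=r$, $|v|=s$. When $t=r+s$ these are geodesic factorisations; when $t<r+s$ there is cancellation of length $j=(r+s-t)/2$, and — exactly as for free groups — the count of such factorisations is then exponential in $j$, so a naive Cauchy--Schwarz is useless and the factorisations must first be organised into polynomially many well-behaved ``types''.

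Supplying that organisation is the combinatorial heart, and where \cite{HoltRees} enters. The facts used are: the language of geodesic words of $G$ over $X$ is regular; every geodesic word has a canonical decomposition into maximal syllables, each lying in a dihedral parabolic $\langle x_i,x_j\rangle$, with syllable transitions governed by a finite automaton; and any two geodesic words for the same element are joined by a bounded-length sequence of elementary rewrites, each replacing a factor inside a single dihedral parabolic by an equal-length factor there. From this we extract, first, that for $t=r+s$ the number of geodesic factorisations $w=uv$ with $|u|=r$ is polynomially bounded in $r$: once a geodesic word for $w$ is fixed, the splitting point lies at a syllable boundary or inside one syllable, and the rewrites displace it only within a bounded window inside that syllable, while $w$ has at most $t$ syllables. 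Secondly, for $t<r+s$, using the dihedral case as a base together with the syllable picture, a factorisation $w=uv$ with $|u|=r$, $|v|=s$ is pinned down by the datum $(\,j,\ \text{syllable position where }u\text{ first meets the cancelled part},\ c\,)$, where the cancelled element $c$ has length $j$ and is confined to a single dihedral parabolic up to bounded ambiguity; the first two entries take only $O(rs)$ values, and these are the required types. It is precisely here that the absence from $\Gamma$ of a $3,3,m$ triangle ($3\le m<\infty$) is used: in its presence the dihedral rewrites across the two label-$3$ edges can be chained through the label-$m$ edge, producing families of geodesic words for one element whose size and internal combinatorics escape the bounded-window control, so that the factorisation count no longer stratifies into boundedly many clean types. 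Extra-large type forces all labels $\ge 4$, hence no such triangle, which gives the final clause of the theorem.

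The remaining step is the elementary analysis, run in the style of Haagerup's free-group argument. For fixed $r,s,t$, split $(f*g)|_{S_t}(w)$ over the polynomially many types; inside a fixed type the inner sum is over the cancelled element $c$ of fixed length $j$ ranging in one dihedral parabolic, and Cauchy--Schwarz bounds it against $\ell^2$ masses of $f$ and $g$ that are genuinely recovered exactly because $c$ is confined to that subgroup — the point the exponential naive count would have wrecked. Squaring, summing over $w\in S_t$, then over the types (one polynomial factor), gives the sphere estimate $\|(f*g)|_{S_t}\|_2\le P(r)\|f\|_2\|g\|_2$, and the orthogonality bookkeeping of the first paragraph then completes the verification of property RD.

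The main obstacle is the second half of the combinatorial step: turning the syllable-plus-rewriting description of geodesics into the statement that all length-$r$-by-length-$s$ factorisations of a given element fall into $O(rs)$ types, each carrying a clean convolution estimate. Here the geometry of large-type Artin groups is genuinely harder than that of free or hyperbolic groups — cancellation is not a single meeting point but an interaction spread over several dihedral parabolics — and this is exactly the point at which the hypothesis on $3,3,m$ triangles cannot be removed by the present method.
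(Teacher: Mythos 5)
There is a genuine gap, and it sits at the very centre of your combinatorial step. You assert that for a geodesic splitting ($t=r+s$) the number of factorisations $w=uv$ with $|u|=r$ is polynomially bounded, because different geodesic representatives of $w$ are related by rewrites that only move the splitting point within a ``bounded window'' of one syllable. This is false. Take $g=\Delta_{ij}^N$, a power of the Garside element of a dihedral parabolic $G(i,j)$ (such elements occur inside every large-type Artin group with $m_{ij}<\infty$). Every positive element of $G(i,j)$ of length $r\le N$ is a left divisor of $g$, all of these give geodesic factorisations, and their number grows exponentially in $r$; correspondingly, two geodesic representatives of $g$ are not related by boundedly many dihedral rewrites, and the set of elements represented by their length-$r$ prefixes is not confined to any bounded window. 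This is exactly why the paper does \emph{not} count all geodesic factorisations: it introduces a restricted set $\PP$ of \emph{permissible} factorisations (for signed elements of a dihedral subgroup, those that are not $\Delta$-decreasing, together with those in which one factor has at most two syllables), proves the polynomial bound D1 only for $\PP$, and then needs a second condition D2 — established by a merging/compression process producing triples $(f_1,\Delta_{ij}^r,f_2)$ — to show that \emph{every} factorisation of $g$, permissible or not, can be routed through permissible factorisations of $g_1$ and $g_2$ with a controlled middle piece that is a power of some $\Delta_{ij}$. Your proposal contains no mechanism for taming the exponential family of factorisations passing through powers of $\Delta_{ij}$, which is the main difficulty the theorem has to overcome; the Cauchy--Schwarz endgame you describe only works after that reduction has been carried out.

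Two further points. First, your picture of the non-geodesic case, with a single cancelled element $c$ of length $j$ ``confined to one dihedral parabolic up to bounded ambiguity'', is too close to the free-group situation: in the actual argument the pieces stripped from $g_1$ and $g_2$ need not lie in any single parabolic (only their product is forced to be $\Delta_{ij}^r$), a genuinely non-trivial middle element survives rather than pure cancellation, and bounding the number of resulting triples requires reversing the compression process and re-using D1 — none of which is supplied by the type datum you propose. Second, the $(3,3,m)$-hypothesis enters your argument only as an informal remark about chained rewrites, whereas the proof needs it at a precise and provable point: it is what guarantees (Proposition~\ref{onetail}) that the letter $a$ with $\LD_{ij}(g)=_G u a^r$ is the same for all geodesic representatives of $g$, a uniqueness statement that underpins both the inductive verification of D1 for the multi-generator group and the count of the exceptional triples in D2, and which genuinely fails for $(3,3,m)$ triangles. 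Without formulating and proving a statement of this kind, the stratification into ``polynomially many clean types'' that your argument presupposes cannot be established.
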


\begin{corollary}\label{main_bc}
Let $G=G(\Gamma)$ be an Artin group satisfying the hypotheses of
Theorem~\ref{main_rd}.
Then $G$ satisfies the Baum-Connes conjecture without coefficients whenever one
of the following holds.
\begin{enumerate}
\item $G$ is 3-generated.
\item $\Gamma$ is triangle-free.
\item The edges of $\Gamma$ can be oriented in such a way that
the oriented graph contains no subgraph of either of the two types shown.

\begin{picture}(200,110)(0,-10)
\put(0,0){\circle*{10}}
\put(0,0){\line(1,0){100}}
\put(50,0){\vector(1,0){0}}
\put(0,0){\line(2,3){50}}
\put(25,37.5){\vector(2,3){0}}
\put(100,0){\circle*{10}}
\put(100,0){\line(-2,3){50}}
\put(75,37.5){\vector(2,-3){0}}
\put(50,75){\circle*{10}}
\put(130,0){\circle*{10}}
\put(130,0){\line(1,0){75}}
\put(167.5,0){\vector(-1,0){0}}
\put(130,0){\line(0,1){75}}
\put(130,37.5){\vector(0,-1){0}}
\put(205,0){\circle*{10}}
\put(205,0){\line(0,1){75}}
\put(205,37.5){\vector(0,1){0}}
\put(130,75){\circle*{10}}
\put(130,75){\line(1,0){75}}
\put(167.5,75){\vector(1,0){0}}
\put(205,75){\circle*{10}}
\end{picture}
\end{enumerate}

In particular, any large type Artin group $G(\Gamma)$  for which $\Gamma$
is triangle-free satisfies the Baum-Connes conjecture without coefficients.
\end{corollary}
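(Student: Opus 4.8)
The plan is to combine Theorem~\ref{main_rd} with a theorem of Lafforgue. By Theorem~\ref{main_rd}, under the hypotheses of the Corollary the group $G$ has property RD. Lafforgue proved that a finitely generated group which has property RD and which acts properly, isometrically and cocompactly on a weakly geodesic, strongly bolic metric space of bounded coarse geometry satisfies the Baum--Connes conjecture without coefficients. A complete CAT(0) space is weakly geodesic and, by a theorem of Kasparov and Skandalis, strongly bolic; and a locally finite polyhedral complex with only finitely many isometry types of cells on which $G$ acts cocompactly automatically has bounded coarse geometry. So it suffices, in each of the three listed cases, to produce a proper, cocompact, isometric action of $G$ on such a CAT(0) complex.

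In case (1) I would appeal to the work of Brady and McCammond on three-generator Artin groups of large type, which shows that, for a three-generator group, the hypothesis of Theorem~\ref{main_rd} --- that $\Gamma$ has no triangle with labels $3,3,m$, $3 \le m < \infty$ --- is exactly the condition under which $G$ acts freely and cocompactly by isometries on a CAT(0) piecewise-Euclidean $2$-complex (a suitably metrized complex built from the defining presentation); freeness gives properness. Since a three-generator group of extra-large type has no edge labelled $3$ at all, this hypothesis holds vacuously for such groups, which is the ``in particular'' clause of the abstract.

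In cases (2) and (3) I would instead produce a proper, cocompact, isometric action of $G$ on a CAT(0) cube complex. Triangle-freeness of $\Gamma$, and more generally the orientability condition of case (3), are the combinatorial hypotheses under which such a cubulation can be carried out: the two displayed oriented subgraphs are precisely the local configurations that would force a vertex link in the resulting complex to contain an embedded cycle that is too short, violating Gromov's link condition for non-positive curvature (the triangle ruling out such a cycle through a triangle of $\Gamma$, the $4$-cycle one through a square of $\Gamma$). The final sentence of the Corollary is then immediate: a triangle-free $\Gamma$ has no triangle with labels $3,3,m$ either, so Theorem~\ref{main_rd} applies, and $G$ is covered by case (2).

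The operator-algebraic half of the argument is essentially a citation once the geometry is in place, so the main obstacle is geometric, and there is a real subtlety here. The most natural candidate space, the Deligne complex of $G$, is $2$-dimensional when $G$ has large type and is known to be CAT(0) in the Moussong metric (Charney and Davis); but the natural $G$-action on it, though cocompact, is \emph{not} proper, since its vertex stabilizers are the infinite dihedral-type parabolic subgroups. One therefore cannot simply invoke CAT(0)-ness of the Deligne complex, and the work is to replace it, in each case, by the CAT(0) complexes above and to check carefully both (i) that the chosen piecewise-Euclidean metric really is CAT(0) --- a link/girth computation, which is exactly where the three hypotheses on $\Gamma$ enter --- and (ii) that the action is genuinely proper and cocompact, so that Lafforgue's bolicity and bounded-coarse-geometry requirements are satisfied.
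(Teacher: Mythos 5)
Your reduction of the problem to producing a proper, cocompact, isometric action on a CAT(0) space, combined with property RD and Lafforgue's theorem, is the right overall shape, and your treatment of case (1) is essentially the paper's: the paper also invokes \cite{BradyMcCammond} (and then \cite[Corollary 0.4]{Lafforgue}, rather than the bolicity formulation, but that is only a difference in how Lafforgue is quoted). One inaccuracy there: the $(3,3,m)$-hypothesis is \emph{not} the condition for the Brady--McCammond CAT(0) structure in the 3-generator case -- their construction works for all 3-generator Artin groups of large type -- it is only needed for the rapid decay half (Theorem~\ref{main_rd}); so you should not present it as ``exactly the condition'' for the existence of the action.

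The genuine gap is in cases (2) and (3). You propose a proper cocompact action on a CAT(0) \emph{cube} complex and assert, without any construction or citation, that triangle-freeness (resp.\ the orientation condition) is ``the combinatorial hypothesis under which such a cubulation can be carried out,'' interpreting the two forbidden oriented subgraphs as link conditions in a cube complex. That is not what these conditions are, and the claim is unsupported and almost certainly false in this generality: cocompact cubulation of Artin groups is known to be very restrictive, and large type Artin groups with triangle-free defining graph (e.g.\ a square with all labels at least $3$) are not covered by any cubulation result you could cite. The paper instead handles all three cases uniformly via \cite{BradyMcCammond}: one passes to a modified presentation $I_\Gamma$ with extra generators in which every relator has length $3$, takes the triangular presentation complex $K_\Gamma$, and metrizes its $2$-cells piecewise-Euclideanly (equilateral triangles in cases (1) and (3), isosceles right-angled triangles in case (2)); the hypotheses (1)--(3) are precisely the hypotheses of Theorems 4, 6 and 7 of \cite{BradyMcCammond}, which guarantee that every closed path in the vertex link has length at least $2\pi$, so $K_\Gamma$ is non-positively curved, and the same link computation in the universal cover $\tilde{K}_\Gamma$ together with the Cartan--Hadamard theorem makes $\tilde{K}_\Gamma$ CAT(0); the free, cocompact action of $G$ on $\tilde{K}_\Gamma$ then feeds into Lafforgue's result. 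In particular the two displayed oriented subgraphs are the obstruction appearing in Brady--McCammond's link analysis of the triangular complex, not a cube-complex condition. Without replacing your cubulation claim by this (or some other genuine) construction, cases (2) and (3) -- and hence the final ``in particular'' statement, which rests on case (2) -- are not proved.
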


Following Jolissaint \cite{J90}, we say that a finitely generated group $G$
has the rapid decay property (RD) if
the operator norm $||.||_*$ for the group algebra $\CG$ is bounded by a constant multiple
of the Sobolev norm $||.||_{s,r,\ell}$, a norm that is a variant of the $l^2$ norm
weighted by a length function for the group.

More precisely, rapid decay holds for $G$ if there are constants $C,r$ and a
length function $\ell$ on $G$ such that
for any $\phi,\psi \in \C{G}$,
\[ ||\phi||_* := \sup_{\psi \in \CG}\frac{|| \phi*\psi||_2}{||\psi||_2} \leq C||\phi||_{2,r,\ell}.\] 
Here,
$\phi*\psi$ denotes the convolution of $\phi$ and $\psi$,
$||.||_2$ is the
standard $l^2$ norm, and $ ||.||_{2,r,\ell}$ the Sobolev norm of order $r$, with respect to the length function $\ell$, that is
\begin{eqnarray*}
\phi*\psi(g)&=&\sum_{h \in G}\phi(h)\psi(h^{-1}g),\\
||\psi||_2 &=& \sqrt{\sum_{g \in G}|\psi(g)|^2}\\
||\phi||_{2,r,\ell}&=& \sqrt{ \sum_{g \in G} |\phi(g)|^2 (1 + \ell(g))^{2r}}.
\end{eqnarray*}
We call a function $\ell:G \rightarrow \R$ a length function for $G$ if it satisfies
\[ \ell(1_G)=0,\quad\ell(g^{-1})=\ell(g),\quad
\ell(gh) \leq \ell(g)+ \ell(h), \quad \forall g,h \in G. \]
An account of the rapid decay property is given in \cite{CR05}.

The main interest in the rapid decay property for a group $G$ stems
from Lafforgue's result \cite{Lafforgue} that
the combination
of rapid decay with an
appropriate action of $G$
implies that the Baum-Connes conjecture without coefficients holds for $G$. 
That conjecture relates the $K$-theory of
the reduced $C^*$-algebra
$C^*_r(G)$ to the $K^G$-homology of the classifying space $\underline{E}G$
for proper $G$-actions, claiming
that the assembly maps
\[ \mu^G_i:RK^G_i(\underline{E}G)\rightarrow K_i(C^*_r(G)),\quad
i=0,1,\]
are isomorphisms.
This in turn implies the Novikov conjecture and (when $G$ is torsion-free)
the Kaplansky-Kadison idempotent conjecture; see \cite{V02}.

The conjecture as stated above is commonly referred to as the Baum-Connes
conjecture
`without coefficients' in order to distinguish it from a more general
conjecture, with coefficients from a $C^*$-algebra admitting an action of $G$,
to which counter-examples were claimed in \cite{HigsonLafforgueSkandalis}.

Braid groups (Artin groups of type $A_n$) are shown to satisfy rapid
decay in \cite{BehrstockMinsky}, where the 
result is proved for mapping class groups; braid groups are proved to satisfy Baum-Connes in
\cite[Corollary 14]{Schick}. We note that since Artin groups of type $B_n$
and $D_n$ are split extensions of free groups by braid groups
\cite{CrispParis}, it follows immediately from \cite[Corollary 14]{Schick}
that those spherical type
Artin groups also satisfy Baum-Connes (but we do not know if the result holds
for the remaining spherical type groups).
Right-angled Artin groups are proved to satisfy both rapid decay and Baum-Connes through their
action on CAT(0) cube complexes \cite{CR05}; alternative proofs of their rapid decay follow
from their small cancellation properties \cite{Ong}, or the fact that they
are graph products of infinite cyclic groups \cite{CHR}. Dihedral Artin groups
are easily proved to satisfy rapid decay since they 
are virtually direct products of free and cyclic groups. For the same
reason they
satisfy the Haagerup property (see \cite{CherixEtAl}),
which implies Baum-Connes \cite{HigsonKasparov}. We are not aware
of any other classes of Artin groups known to satisfy either rapid decay or
Baum-Connes prior to our results. 

We shall call the hypothesis in Theorem~\ref{main_rd} that there no
triangles in $\Gamma$ with edge labels $3,3,m$ the {\em $(3,3,m)$-hypothesis}.
We shall prove rapid decay for Artin groups of large type that
satisfy this hypothesis, using
as our length function the word length metric over the standard generating set;
that is, for each element $g$ we shall define $\ell(g)$ to be the length of
the shortest word over the standard generators that represents $g$.

Our proof makes critical use of the results and techniques developed in
the earlier paper \cite{HoltRees} by the second and third authors, in which
the sets of geodesics and shortlex minimal geodesics of Artin groups of
large type in their standard presentations are studied. It is proved
there that those sets of geodesics are regular, and that the groups are
shortlex automatic. Furthermore, a method was developed for rewriting
arbitary words in the group generators to their shortlex normal forms.
In Sections~\ref{dihedral} and~\ref{largetype} of this paper,
we shall recall some of these results and techniques, and develop them further,
proving that certain conditions hold for factorisations of geodesics.
In particular, our proof of Theorem  \ref{main_rd} depends essentially on
Proposition \ref{onetail}, which turns out to be false in all Artin groups of
large type that do not satisfy the $(3,3,m)$-hypothesis, so we are currently
unable to dispense with this hypothesis.

We shall prove that certain conditions hold for the factorisation of
geodesics, and derive rapid decay in groups for which these conditions hold.
For many of those groups we can build on work of \cite{BradyMcCammond} to
construct an action of
the group that allows the application of Lafforgue's result to deduce Baum-Connes.

Our strategy to prove rapid decay by examining the
factorisation of geodesics was also used by
Jolissaint \cite{J90} and de la Harpe \cite{H88} to prove rapid decay
for word hyperbolic groups, extending Haagerup's
proof for free groups \cite{Haagerup}; Drutu and Sapir used similar
techniques to prove rapid decay for a group
hyperbolic relative to a family of subgroups with the property
\cite{DS05}.
Other authors have used more geometric techniques \cite{Chatterji,CR05}.

In Section~\ref{RDtoBC} we explain how Corollary~\ref{main_bc} can be deduced
from Theorem~\ref{main_rd}.
Then Section~\ref{reformulation} shows how rapid
decay can be deduced from a pair of combinatorial conditions D1 and D2
relating to the factorisation of geodesics in the group. Section~\ref{notation}
introduces the notation we need for the remainder of the article. Section~\ref{dihedral} examines dihedral Artin groups, first recalling from \cite{HoltRees}
some technical results on the structure of geodesics, and then using these
to deduce the conditions D1 and D2 for these groups. Section~\ref{largetype}
extends the results of Section~\ref{dihedral} to deduce the same conditions
for the large type groups considered in Theorem~\ref{main_rd}.

\section{Deducing Baum-Connes from rapid decay}
\label{RDtoBC}
Any discrete group that
acts continuously,
isometrically, properly and co-compactly on a CAT(0) metric space is in
the class ${\mathcal C}'$ defined by Lafforgue in \cite{Lafforgue};
hence by \cite[Corollary 0.4]{Lafforgue},
for such a group the Baum-Connes conjecture is a consequence of rapid decay.

We shall deduce Corollary~\ref{main_bc} from Theorem~\ref{main_rd} using
results of \cite{BradyMcCammond} to construct an appropriate CAT(0)
action for a large class of Artin groups of large type.

A CAT(0) space is defined to be a metric space in which distances across a triangle
with geodesic sides of lengths $p,q,r$ are always bounded above by distances
between equivalent points in a Euclidean triangle with the same side lengths.
A space is locally-CAT(0) or equivalently non-positively curved if
any point has a neighbourhood that is CAT(0).
By the Cartan-Hadamard theorem of \cite{Ballmann2}, a simply connected locally-CAT(0) space is actually CAT(0).

Brady and McCammond define non-positively
curved presentation complexes for non-standard presentations of
3-generator Artin groups of large
type (and some others) in \cite{BradyMcCammond}. We shall see that
the actions of the groups on the universal covers of these
complexes satisfy the conditions we need.

Theorems 4, 6  and 7 of that paper consider the three cases of our theorem.
In each of the three cases we define a complex $K_\Gamma$, the presentation
complex for a presentation $I_\Gamma$ of the Artin group $G(\Gamma)$
in which all relators have length 3, 
formed from the standard presentation by the addition of some generators.
The presentation complex $K_\Gamma$ has
a single 0-cell $v_0$, a 1-cell for each generator of $I_\Gamma$, and a
triangular 2-cell for each relator of $I_\Gamma$.
We define $L_\Gamma$ to be the link of $v_0$;
this is the intersection with the surrounding complex
of a small sphere centred on $v_0$,
and can be viewed as a graph 
to which each edge of $K$ contributes two vertices, and each corner of a
2-cell contributes an edge.
$\tilde{K_\Gamma}$ is the universal cover of $K_\Gamma$;
this is simply connected,
and also has $L_\Gamma$ as the link of each 0-cell.

In each of the three cases of our theorem, it is proved
in \cite{BradyMcCammond} that some specification of
edge lengths of the triangles which are the 2-cells of $K_\Gamma$
extends to a piecewise Euclidean metric on $K_\Gamma$;
in the first and third cases of the theorem all triangles are made
equilateral with side length 1, and in the second case all triangles are
made isosceles and right-angled with their shorter sides of length 1.
The piecewise Euclidean metric induces a metric on the link $L_\Gamma$ of $v_0$,
where the length of each edge is equal to the angle at the corner through
$v_0$ of the corresponding 2-cell.
The metric can also be lifted to the cover
$\tilde{K_\Gamma}$.

An action of $G$ by isometries on $\tilde{K}_\Gamma$ is inherited from
the left regular action of $G$ on its Cayley graph (the 1-skeleton
of $\tilde{K}_\Gamma$), in which the vertex $h$
is mapped by $g$ to the vertex $gh$.
The action is free, so clearly it is both proper
and continuous. The quotient, $K_\Gamma$, of $\tilde{K_\Gamma}$ by $G$ is certainly compact.
In order to apply Lafforgue's result,
we need simply to verify that $\tilde{K_\Gamma}$ equipped with this metric
is CAT(0).

That $K_\Gamma$ is non-positively curved (locally-CAT(0)) is proved in \cite{BradyMcCammond}. In each of the three cases the length of a closed path
in the link of a vertex is seen to be at least $2\pi$; hence
\cite[Theorem 15]{Ballmann} applies to show that $K_\Gamma$ is non-positively curved.
Exactly the same
argument can be applied to the link of a vertex in $\tilde{K_\Gamma}$ to deduce
that the simply connected cover $\tilde{K_\Gamma}$ is also non-positively curved, and hence by the Cartan-Hadamard
theorem of \cite{Ballmann2}, $\tilde{K_\Gamma}$ is CAT(0). Baum-Connes
in each of these cases now follows by \cite[Corollary 0.4]{Lafforgue}.

\section{Reformulation of the rapid decay condition}
\label{reformulation}
Let $\ell$ be a length function on a group $G$.
Given $k\in \N$, we define $C_k$
to be the set of
elements of $G$ with $\ell(g)=k$.
We write $\chi_k$ for the characteristic function on $C_k$, 
and for $\phi \in \CG$, we write $\phi_k$ for the pointwise product
$\phi.\chi_k$.
(Generally, in this article we use a subscript $k$ to indicate that a function
has support on $C_k$.)

It is proved by Jolissaint \cite[Proposition 1.2.6]{J90} that rapid decay for $G$ is equivalent to the following condition $(*)$:
\begin{eqnarray*}
 \forall \phi,\psi \in \CG,k,l,m \in \N,\\
|k-l|\leq m \leq k + l,&\Rightarrow&
||(\phi_k * \psi_l)_m ||_2 \leq  ||\phi_k||_{2,r,\ell} || \psi_l||_2,\\
\hbox{\rm otherwise}&&
||(\phi_k * \psi_l)_m ||_2 =0\end{eqnarray*}
We observe that it follows from the properties of
a  length function 
that the norm $||(\phi_k * \psi_l)_m ||_2$ is zero for $m$ outside
the range $[|k-l|,k+l]$. Hence we shall verify rapid decay by verifying
the following condition $(**)$:

there exists a polynomial $P(x)$ such that:
\begin{eqnarray*}
 \forall \phi,\psi \in \CG,k,l,m \in \N,\\
|k-l|\leq m \leq k + l,&\Rightarrow&
||(\phi_k * \psi_l)_m ||_2 \leq   P(k)||\phi_k||_2 || \psi_l||_2.
\end{eqnarray*}
We can clearly assume that the coefficients of such a polynomial $P$ are
non-negative, and hence that $P$ is an increasing function of $x$, and we
shall assume throughout this paper that all polynomials that arise have this
property.

\section{Geodesic factorisation}
For $g \in G$ we call a decomposition of $g$ as a product $g_1\cdots g_k$
a {\em geodesic factorisation} of $g$ if 
$\sum \ell(g_i)=\ell(g)$, and call the elements $g_i$ {\em divisors} of $g$.
In particular $g_1$ is called a {\em left divisor} and $g_k$ is called a
{\em right divisor}.
Given $g\in C_{k+l}$
we define \begin{eqnarray*}
\Factors_{k,l}(g)&:=& \{ (g_1,g_2): g_1 \in C_k,g_2 \in C_l: g_1g_2 =_G g \} \\
\FF_{k,l}&:= & \sup_{g \in C_{k+l}} |\Factors_{k,l}(g)|\\
\end{eqnarray*}
Let $\PP$ be a subset of $G^2$ and, for $g \in G$, let
$\PP(g) = \{(g_1,g_2) \in \PP : g_1g_2=g\}$.
We shall refer to the factorisations of $g$ in $\PP(g)$ as the
{\em permissible} factorisations of $g$.
We define
\begin{eqnarray*}
\Factors_{\PP,k,l}(g)&:=&
\{ (g_1,g_2): g_1 \in C_k,g_2 \in C_l, (g_1,g_2) \in \PP(g) \} \\
\FF_{\PP,k,l}&:= & \sup_{g \in C_{k+l}} |\Factors_{\PP,k,l}(g)|\\
\end{eqnarray*}
We shall verify the condition $(**)$ above for Artin
groups satisfying the hypotheses of Theorem~\ref{main_rd},
and hence verify rapid decay,
by finding a suitable set $\PP$ of permissible factorisations
for which the size of both $\Factors_{\PP,k,l}(g)$ and a related
set are polynomially bounded.

We define two conditions D1, D2 that we shall require to hold on a 
suitable subset $\PP$ of $G^2$.
\begin{description}
\item[D1] $\FF_{\PP,k,l}$ is bounded above by $P_1(\min(k,l))$ for some
polynomial $P_1(x)$.

\item[D2] 
For each $g \in G$, each $k,l \geq 0$,
there is a subset $S(g,k,l)$ of $G^3$ 
as follows.

For each decomposition of $g$ as a product
$g_1g_2$ with $g_1 \in C_k$, $g_2 \in C_l$,
$S(g,k,l)$ contains a triple $(f_1,\hat{g},f_2)$,
for which
$g=f_1\hat{g}f_2$, and $\hat{g}=h_1h_2$, where
$(f_1,h_1) \in \Factors_{\PP,k-p_1,p_1}(g_1)$ and
$(h_2,f_2) \in \Factors_{\PP,p_2,l-p_2}(g_2)$,
for some $p_1,p_2 \le K\min(k,l)$, for some global constant $K$.

Furthermore, there are polynomials $P_2(x), P_3(x)$ such that
\begin{description}
\item[(a)] for all $g,k,l$, $|S(g,k,l)| \leq P_2(\min(k,l))$,
\item[(b)] $|T(k,l)| \le P_3(\min(k,l))$, where
$T(k,l) := \{ \hat{g}: \exists g,\, (f_1,\hat{g},f_2) \in S(g,k,l) \}$.
\end{description}
\end{description}
Figure~\ref{fig1} illustrates the condition D2.

\setlength{\unitlength}{0.65pt}
\begin{figure}
\begin{center}
{\large
\begin{picture}(500,260)(-250,-110)
\put(-250,0){\circle*{10}}
\put(0,-40){\circle*{10}}
\put(250,0){\circle*{10}}
\qbezier(-250,0)(0,50)(250,0) \put(0,30){$g$}
\qbezier(-250,0)(-125,0)(0,-40) \put(-140,-20){$g_1$}
\qbezier(250,0)(125,0)(0,-40) \put(125,-20){$g_2$}
\put(-100,-60){\circle*{10}}
\put(100,-60){\circle*{10}}
\qbezier(-250,0)(-175,-25)(-100,-60) \put(-160,-50){$f_1$}
\qbezier(250,0)(175,-25)(100,-60) \put(150,-50){$f_2$}
\qbezier(-100,-60)(0,-25)(100,-60) \put(0,-60){$\hat{g}$}
\put(-80,-45){$h_1$} \put(70,-45){$h_2$}
\end{picture}
}
\caption{\label{fig1} Condition D2}
\end{center}
\end{figure}
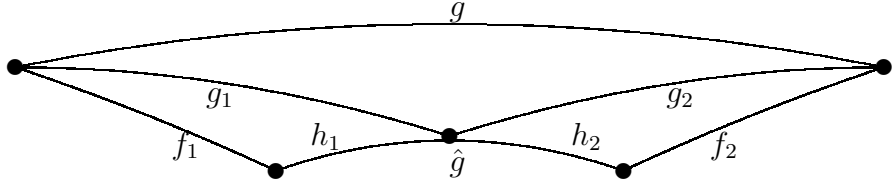

\begin{theorem}
\label{D1D2_RD}
Let $G$ be a group and $\PP$ a subset of $G^2$ for which the conditions D1 and D2 hold.
Then $G$ satisfies rapid decay with respect to the word length metric.
\end{theorem}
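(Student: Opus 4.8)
The plan is to verify condition $(**)$ directly, estimating $||(\phi_k * \psi_l)_m||_2$ for arbitrary $\phi_k$ supported on $C_k$ and $\psi_l$ supported on $C_l$, where without loss of generality $k \le l$. The overall idea is that the convolution $(\phi_k*\psi_l)_m(g)$ is a sum over factorisations $g = g_1 g_2$ with $g_1 \in C_k$, $g_2 \in C_l$; condition D2 lets us re-route each such factorisation through a ``central piece'' $\hat g$ and two ``tails'' $f_1, f_2$, so that $g_1 = f_1 h_1$, $g_2 = h_2 f_2$, and $\hat g = h_1 h_2$ with $(f_1,h_1)$ and $(h_2,f_2)$ being $\PP$-factorisations. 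I would first fix $g \in C_m$ and, using the map $g_1 g_2 \mapsto (f_1,\hat g, f_2) \in S(g,k,l)$ guaranteed by D2, group the terms of $(\phi_k*\psi_l)_m(g)$ according to which triple in $S(g,k,l)$ they map to. For a fixed triple $(f_1,\hat g, f_2)$, the inner sum runs over the ways of writing $g_1 = f_1 h_1$ and $g_2 = h_2 f_2$ with $\hat g = h_1 h_2$; writing $h_1 \in C_{p_1}$, the value contributed is $\sum_{h_1} \phi_k(f_1 h_1)\psi_l(h_1^{-1}\hat g f_2)$, which by Cauchy--Schwarz is bounded by $\bigl(\sum_{h_1}|\phi_k(f_1 h_1)|^2\bigr)^{1/2}\bigl(\sum_{h_1}|\psi_l(h_2 f_2)|^2\bigr)^{1/2}$, each factor being a partial $\ell^2$-mass of $\phi_k$ resp. $\psi_l$.

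Next I would sum over $g \in C_m$ and over the triples. The key combinatorial inputs are: D2(a), that $|S(g,k,l)| \le P_2(k)$, which bounds the number of triples per $g$ and lets me apply Cauchy--Schwarz again in the outer sum (turning $|\sum|^2$ into $P_2(k)\sum|\cdot|^2$); D2(b), that the set $T(k,l)$ of central pieces $\hat g$ appearing has size $\le P_3(k)$, which is what prevents the $f_1,\hat g,f_2$ indexing from overcounting the $\ell^2$-mass of $\phi_k$ and $\psi_l$ — for a fixed $f_1$ and fixed $\hat g$, the products $f_1 h_1$ with $h_1$ a left-divisor of $\hat g$ in the relevant length class all land among the group elements at distance $k$ from $1$, and as $\hat g$ ranges over $T(k,l)$ and $f_1$ over its (at most $P_2$-many, via $S$) possibilities, each element of $C_k$ is hit a controlled number of times; similarly for $\psi_l$ via $f_2$ and $\hat g$; and D1 bounds, for a fixed $\hat g \in T(k,l)$, the number of its $\PP$-factorisations $(h_1,h_2) \in \Factors_{\PP,p_1,p_2}(\hat g)$ by $P_1(\min(p_1,p_2)) \le P_1(Kk)$, since $p_1,p_2 \le K\min(k,l) = Kk$. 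Assembling these, $||(\phi_k*\psi_l)_m||_2^2$ is bounded by a product of a polynomial in $k$ with $||\phi_k||_2^2\,||\psi_l||_2^2$, giving $(**)$ with $P(x)$ the square root of that polynomial, and hence rapid decay by the equivalence recalled in Section~\ref{reformulation}.

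The bookkeeping has to be done carefully because the indexing data $(f_1, h_1, h_2, f_2)$ is redundant in several ways and a naive application of Cauchy--Schwarz would lose a factor growing with $\min(k,l)$ in the wrong place — so the main obstacle is organising the double Cauchy--Schwarz (once in the $h_1$-sum inside a fixed triple, once across triples and across $g$) so that each of D1, D2(a) and D2(b) is used exactly where needed and the $\ell^2$-norms of $\phi_k$ and $\psi_l$ each appear only to the first power (squared). Concretely, I expect the cleanest route is: bound $|(\phi_k*\psi_l)_m(g)|^2 \le P_2(k)\sum_{(f_1,\hat g,f_2)\in S(g,k,l)} \bigl(\sum_{h_1}|\phi_k(f_1h_1)|^2\bigr)\bigl(\sum_{h_1}|\psi_l(h_2f_2)|^2\bigr)$, then on the second factor use D1 to note the $h_1$-sum has at most $P_1(Kk)$ terms and crudely bound each $|\psi_l(h_2 f_2)|^2 \le ||\psi_l||_2^2$ is too lossy — instead I would instead sum over $g$ first, fix $\hat g \in T(k,l)$ and a left-tail $f_1$, observe $\sum_{g}\sum_{h_1}|\phi_k(f_1 h_1)|^2 \le P_1(Kk)\,||\phi_k||_2^2$ after noting each value $\phi_k(f_1h_1)$ recurs boundedly often as $g$ varies with $(f_1,\hat g)$ fixed, and symmetrically for $\psi_l$, so that the $T(k,l)$-sum contributes a factor $P_3(k)$. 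Combining the per-$g$ factor $P_2(k)$, the $T$-sum factor $P_3(k)$, the D1 factor $P_1(Kk)$ and the re-routing overhead yields the desired polynomial $P(k)$, and I would present the constants explicitly rather than tracking them symbolically.
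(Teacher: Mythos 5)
Your overall route is the paper's: expand $(\phi_k*\psi_l)_m(g)$ over factorisations, re-route each $(g_1,g_2)$ through a triple $(f_1,\hat g,f_2)\in S(g,k,l)$ supplied by D2, apply Cauchy--Schwarz once inside a fixed triple (over $h_1$) and once across triples (using D2(a), $|S(g,k,l)|\le P_2(\min(k,l))$), and then replace the sum over $g\in C_m$ and triples by a sum over $\hat g\in T(k,l)$ and independent tails $f_1,f_2$ --- legitimate because $g=f_1\hat g f_2$ is determined by the triple --- which is where D2(b) contributes the factor $P_3$. This is exactly the published argument, with your quantities $\sum_{h_1}|\phi_k(f_1h_1)|^2$ appearing there as $\phi^{(p_1)}_{\PP,k-p_1}(f_1)^2$.

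However, both places where you specify how D1 enters are misstated, and as written those steps fail. First, D1 does not bound ``the number of $\PP$-factorisations $(h_1,h_2)\in\Factors_{\PP,p_1,p_2}(\hat g)$'': D2 only asserts $(f_1,h_1)\in\Factors_{\PP,k-p_1,p_1}(g_1)$ and $(h_2,f_2)\in\Factors_{\PP,p_2,l-p_2}(g_2)$; the pair $(h_1,h_2)$ need not be a permissible, or even geodesic, factorisation of $\hat g$, so D1 says nothing about it (and no such bound is needed). Second, in your concrete accounting you fix $(f_1,\hat g)$ and claim $\sum_{g}\sum_{h_1}|\phi_k(f_1h_1)|^2\le P_1(Kk)\,||\phi_k||_2^2$ because ``$\phi_k(f_1h_1)$ recurs boundedly often as $g$ varies'': with $(f_1,\hat g)$ fixed the inner sum does not depend on $g$ at all, and the number of admissible $g$ (equivalently of $f_2$) is not controlled by anything; that sum over $g$, i.e.\ over $f_2$, must instead be paired with the $\psi_l$-factor through the product structure $\sum_{\hat g\in T(k,l)}\bigl(\sum_{f_1}\sum_{h_1}|\phi_k(f_1h_1)|^2\bigr)\bigl(\sum_{f_2}\sum_{h_2}|\psi_l(h_2f_2)|^2\bigr)$. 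The correct role of D1 is to bound the multiplicity in the $f_1$-sum (and symmetrically the $f_2$-sum): for a fixed $p_1$, a given $g_1\in C_k$ arises as $f_1h_1$ with $(f_1,h_1)\in\PP$, $f_1\in C_{k-p_1}$, $h_1\in C_{p_1}$ at most $\FF_{\PP,k-p_1,p_1}\le P_1(\min(k-p_1,p_1))\le P_1(K\min(k,l))$ times --- this is Lemma~\ref{relate_l2_norms} combined with D1 --- and summing over the at most $K\min(k,l)+1$ possible values of $p_1$ (and of $p_2$) costs only a further polynomial factor. With these two corrections your plan closes up and coincides with the paper's proof.
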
 

Before launching into the proof of Theorem~\ref{D1D2_RD} we prove a few
technical results.  
Suppose first that $\phi_{k}$ is a function with support on $C_{k}$.
Then for any $p\ge 0$, we can define functions
$\phi^{(p)}_{\PP,k-p}$ and ${}^{(p)}\phi_{\PP,k-p}$ with support on $C_{k-p}$ by
\begin{eqnarray*}
\phi^{(p)}_{\PP,k-p}(g) &=& \sqrt{\sum_{h \in C_{p}, (g,h) \in \PP} | \phi_k(gh)|^2}, \quad\hbox{\rm if}\quad g \in C_{k-p}\\
&=& 0\quad\hbox{\rm otherwise}\\
{}^{(p)}\phi_{\PP, k-p}(g) &=& \sqrt{\sum_{h \in C_{p}, (h,g) \in \PP} | \phi_k(hg)|^2}, \quad\hbox{\rm if}\quad g \in C_{k-p}\\
&=& 0\quad\hbox{\rm otherwise.}
\end{eqnarray*}
%
\begin{lemma}
\label{relate_l2_norms}
\begin{eqnarray*}
 ||\phi^{(p)}_{\PP,k-p}||_2^2 &\leq& \FF_{\PP,k-p,p}||\phi_{k}||_2^2 \\
 ||{}^{(p)}\phi_{\PP,k-p}||_2^2 &\leq& \FF_{\PP,p,k-p}||\phi_{k}||_2^2 \end{eqnarray*}
\end{lemma}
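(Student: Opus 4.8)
The plan is to prove the first inequality by expanding the definition of $||\phi^{(p)}_{\PP,k-p}||_2^2$ and interchanging the order of summation; the second inequality follows by the symmetric argument. First I would write
\[
||\phi^{(p)}_{\PP,k-p}||_2^2 \;=\; \sum_{g \in C_{k-p}} \bigl(\phi^{(p)}_{\PP,k-p}(g)\bigr)^2 \;=\; \sum_{g \in C_{k-p}} \;\sum_{\substack{h \in C_p \\ (g,h) \in \PP}} |\phi_k(gh)|^2,
\]
the square root disappearing against the square. Now I would re-index this double sum by the group element $f := gh \in C_k$: each pair $(g,h)$ with $g \in C_{k-p}$, $h \in C_p$, $(g,h) \in \PP$ and $gh = f$ is exactly an element of $\Factors_{\PP,k-p,p}(f)$, so the term $|\phi_k(f)|^2$ appears in the rearranged sum exactly $|\Factors_{\PP,k-p,p}(f)|$ times. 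Hence
\[
||\phi^{(p)}_{\PP,k-p}||_2^2 \;=\; \sum_{f \in C_k} |\Factors_{\PP,k-p,p}(f)|\, |\phi_k(f)|^2 \;\le\; \sup_{f \in C_k}|\Factors_{\PP,k-p,p}(f)| \cdot \sum_{f \in C_k}|\phi_k(f)|^2 \;=\; \FF_{\PP,k-p,p}\,||\phi_k||_2^2,
\]
using the definition $\FF_{\PP,k-p,p} = \sup_{f \in C_k}|\Factors_{\PP,k-p,p}(f)|$ and the fact that $\phi_k$ is supported on $C_k$. The second inequality is obtained identically, replacing the factorisation $f = gh$ with $g$ on the left by $f = hg$ with $g$ on the right, so that the relevant count becomes $|\Factors_{\PP,p,k-p}(f)|$.

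There is essentially no obstacle here: the only point requiring a moment's care is the book-keeping in the re-indexing step — checking that summing first over $g \in C_{k-p}$ and then over admissible $h$, versus summing over $f \in C_k$ and then over factorisations, genuinely produces the same multiset of terms $|\phi_k(f)|^2$ (in particular that every relevant $f$ does lie in $C_k$, which holds because $(g,h)\in\PP$ forces $g\in C_{k-p}$, $h\in C_p$ and hence contributes a length-$k$ element). Everything else is the elementary inequality $\sum a_f |\phi_k(f)|^2 \le (\sup_f a_f)\sum|\phi_k(f)|^2$ for non-negative weights $a_f$.
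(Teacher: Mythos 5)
Your argument is correct and is essentially the paper's own proof: expand the squared norm as a double sum, observe that each $f\in C_k$ occurs with multiplicity $|\Factors_{\PP,k-p,p}(f)|\le \FF_{\PP,k-p,p}$, and bound accordingly. The only cosmetic remark is that your claim that $(g,h)\in\PP$ forces $gh\in C_k$ is not needed (and not guaranteed for an abstract $\PP\subseteq G^2$): terms with $gh\notin C_k$ simply vanish because $\phi_k$ is supported on $C_k$, so the re-indexing goes through regardless.
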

\begin{proof}
\begin{eqnarray*}
||\phi^{(p)}_{\PP,k-p}||_2^2 &=& \sum_{g \in C_{k-p}}\sum_{h \in C_{p}, (g,h)
\in \PP}|\phi_k(gh)|^2 \\ & \leq & \FF_{\PP,k-p,p} \sum_{g_1 \in C_{k}}
|\phi_k(g_1)|^2\\ &=& \FF_{\PP,k-p,p} ||\phi_k||_2^2\\ \end{eqnarray*}
The second inequality follows similarly.
\end{proof}
We shall make frequent use of the following, which
is an easy application of the Cauchy-Schwarz inequality.
\begin{lemma}
\label{squares}
For $a_i \in \C$, $n \in \N$,
\[ |\sum_{i=1}^n a_i|^2 \leq n\sum_{i=1}^n |a_i|^2\]
\end{lemma}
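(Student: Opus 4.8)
The plan is to deduce this immediately from the Cauchy--Schwarz inequality in $\C^n$. First I would introduce the vectors $u = (1,1,\ldots,1)$ and $v = (a_1,a_2,\ldots,a_n)$ in $\C^n$, so that $\sum_{i=1}^n a_i$ is the ordinary pairing of $u$ (which has real entries) with $v$. Applying Cauchy--Schwarz to this pairing gives $\bigl|\sum_{i=1}^n a_i\bigr|^2 \le \|u\|_2^2\,\|v\|_2^2 = n\sum_{i=1}^n |a_i|^2$, which is exactly the claimed inequality.

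If one prefers an argument that does not invoke Cauchy--Schwarz directly, I would instead expand $\bigl|\sum_i a_i\bigr|^2 = \sum_{i,j} a_i\overline{a_j}$, apply the triangle inequality to get $\bigl|\sum_i a_i\bigr|^2 \le \sum_{i,j}|a_i||a_j|$, and bound each term by $|a_i||a_j| \le \tfrac12(|a_i|^2 + |a_j|^2)$ (the AM--GM inequality for the nonnegative reals $|a_i|^2$ and $|a_j|^2$). Summing over all $n^2$ ordered pairs $(i,j)$ yields $\sum_{i,j}|a_i||a_j| \le \tfrac12\bigl(n\sum_i|a_i|^2 + n\sum_j|a_j|^2\bigr) = n\sum_i|a_i|^2$, completing the proof.

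There is essentially no obstacle here: the statement is a textbook instance of Cauchy--Schwarz, and the only small point requiring care is that the $a_i$ are complex rather than real, which is handled either by pairing with the real vector $u$ above or by passing to moduli in the expansion. For completeness one can note that equality holds exactly when all the $a_i$ coincide, though this refinement is not needed in what follows.
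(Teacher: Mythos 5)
Your proof is correct and uses exactly the approach the paper intends: the paper states this lemma as ``an easy application of the Cauchy--Schwarz inequality'' and gives no further proof, and your pairing of $(a_1,\ldots,a_n)$ with the all-ones vector is precisely that application. The alternative expansion via $|a_i||a_j|\le\tfrac12(|a_i|^2+|a_j|^2)$ is a fine elementary substitute but adds nothing needed here.
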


We are now ready to prove the theorem.

\begin{proofof}{Theorem~\ref{D1D2_RD}}
Suppose that D1 and D2 hold, 
and let $K,P_1(x),P_2(x),P_3(x)$ and the sets
$\PP$, $S(g,k,l)$ and $T(k,l)$ be as specified by those conditions.

Choose $k,l,m$ such that $|k-l| \leq m \leq k+l$, and
define $\hat{k}:= \min(k,l)$.
Let $g\in G$ with $\ell(g)=m$.
Then
\[ |(\phi_k * \psi_l)(g)|
= |\sum_{\scriptsize\begin{array}{c} 
g_1\in C_k,g_2\in C_l\\ g=g_1g_2\end{array}}\phi_k(g_1)\psi_l(g_2)|
\leq  \sum_{\scriptsize\begin{array}{c} 
g_1\in C_k,g_2\in C_l\\ g=g_1g_2\end{array}}|\phi_k(g_1)\psi_l(g_2)|\]
Now the condition D2 defines an injection from
$\Factors_{k,l}(g)$ to $G^5 \times \N^2$
that maps $(g_1,g_2)$ to $(f_1,\hat{g},f_2,h_1,h_2,p_1,p_2)$,
where $h_1 \in C_{p_1}$, $h_2 \in C_{p_2}$, $(f_1,h_1),(h_2,f_2) \in \PP$
and $h_1h_2=\hat{g}$. We define 
\[ H(f_1,\hat{g},f_2,p_1,p_2) := \{h_1\in C_{p_1}: h_2:= h_1^{-1}\hat{g} \in C_{p_2}, (f_1,h_1),(h_2,f_2) \in \PP\}. \]
So the right hand side of the above inequality is bounded above by:
\[ 
\sum_{p_1,p_2=0}^{K\hat{k}} \, \sum_{ (f_1,\hat{g},f_2) \in S(g,k,l)} 
\sum_{
\scriptsize\begin{array}{c}h_1 \in H(f_1,\hat{g},f_2,p_1,p_2)\\
h_2:= h_1^{-1}\hat{g}\end{array}}
|\phi_k(f_1h_1)\psi_l(h_2f_2)|\]
Note that this summation is over a set that is
possibly larger than the image of that injection,
and hence we have an upper bound rather than equality.

Now by the Cauchy-Schwartz inequality, this sum is at most
\begin{eqnarray*}
&&
\sum_{p_1,p_2=0}^{K\hat{k}} \, \sum_{ (f_1,\hat{g},f_2) \in S(g,k,l)} 
\sqrt{ \sum_{
\scriptsize\begin{array}{c} h_1 \in C_{p_1}\\(f_1,h_1)\in \PP\end{array}}
|\phi_k(f_1h_1)|^2}
\sqrt{\sum_{
\scriptsize\begin{array}{c} h_2 \in C_{p_2}\\(h_2,f_2)\in \PP\end{array}}
|\psi_l(h_2f_2)|^2}\\
&=&
\sum_{p_1,p_2=0}^{K\hat{k}} \, \sum_{ (f_1,\hat{g},f_2) \in S(g,k,l)} 
\phi^{(p_1)}_{\PP,k-p_1}(f_1)
\quad{}^{(p_2)}\psi_{\PP,l-p_2}(f_2)\\
\end{eqnarray*}
where $\phi^{(p)}_{\PP,k-p}$ and ${}^{(p)}\psi_{\PP,l-p}$ are as defined above.

So now, we have
\begin{eqnarray*}
||(\phi_k * \psi_l)_m||_2^2 &=& \sum_{g \in C_m}|\phi_k * \psi_l(g)|^2\\ 
&\le&  \sum_{g \in C_m}|\sum_{p_1,p_2=0}^{K\hat{k}} 
 \sum_{
(f_1,\hat{g},f_2) \in S(g,k,l)} 
  \phi^{(p_1)}_{\PP,k-p_1}(f_1) \quad{}^{(p_2)}\psi_{\PP,l-p_2}(f_2)|^2
\end{eqnarray*}
Using Lemma~\ref{squares} twice we see that
\begin{eqnarray*}
&&  \sum_{g \in C_m}|\sum_{p_1,p_2=0}^{K\hat{k}} 
 \sum_{
(f_1,\hat{g},f_2) \in S(g,k,l)} 
  \phi^{(p_1)}_{\PP,k-p_1}(f_1) \quad{}^{(p_2)}\psi_{\PP,l-p_2}(f_2)|^2\\
&\le&  (K\hat{k}+1)^2 \sum_{g \in C_m}\sum_{p_1,p_2=0}^{K\hat{k}}
| \sum_{
(f_1,\hat{g},f_2) \in S(g,k,l)} 
  \phi^{(p_1)}_{\PP,k-p_1}(f_1) \quad{}^{(p_2)}\psi_{\PP,l-p_2}(f_2)|^2\\
&\leq& (K\hat{k}+1)^2 \sum_{p_1,p_2=0}^{K\hat{k}}
   \sum_{g \in C_m}  |S(g,k,l)|  \sum_{
(f_1,\hat{g},f_2) \in S(g,k,l)} 
 |\phi^{(p_1)}_{\PP,k-p_1}(f_1) \quad{}^{(p_2)}\psi_{\PP,l-p_2}(f_2)|^2\\
&\leq& (K\hat{k}+1)^2 P_2(\hat{k}) \sum_{p_1,p_2=0}^{K\hat{k}}
\sum_{g \in C_m}  \sum_{
(f_1,\hat{g},f_2) \in S(g,k,l)} 
 |\phi^{(p_1)}_{\PP,k-p_1}(f_1) \quad{}^{(p_2)}\psi_{\PP,l-p_2}(f_2)|^2
\end{eqnarray*}
The last sum is bounded above by
\[(K\hat{k}+1)^2 P_2(\hat{k})  \sum_{p_1,p_2=0}^{K\hat{k}}
  \sum_{\hat{g} \in T(k,l)} \left(\sum_{f_1 \in C_{k-p_1}} |\phi^{(p_1)}_{\PP,k-p_1}(f_1)|^2 \quad
   \sum_{f_2 \in C_{l-p_2}} |^{(p_2)}\psi_{\PP,l-p_2}(f_2)|^2 \right)\]
which is a sum of the same terms but over a possibly larger set.
We bound the last sum above by
\begin{eqnarray*}
&&  (K\hat{k}+1)^2 P_2(\hat{k}) P_3(\hat{k})
\sum_{p_1,p_2=0}^{K\hat{k}}
|\phi^{(p_1)}_{\PP,k-p_1}||_2^2 ||{}^{(p_2)}\psi_{\PP,l-p_2}||_2^2\\
&\leq&  
 (K\hat{k}+1)^2P_2(\hat{k})P_3(\hat{k})
  \sum_{p_1,p_2=0}^{K\hat{k}}
\FF_{\PP,k-p_1,p_1}\FF_{\PP,p_2,l-p_2} ||\phi_k||_2^2 ||\psi_l||_2^2\\
&\leq&
 (K\hat{k}+1)^2P_2(\hat{k})P_3(\hat{k})
  \sum_{p_1,p_2=0}^{K\hat{k}}
P_1(\min(k-p_1,p_1))P_1(\min(p_2,l-p_2)) ||\phi_k||_2^2 ||\psi_l||_2^2,
\end{eqnarray*}
using Lemma~\ref{relate_l2_norms} to relate the $l^2$-norms.
Since $\min(k-p_1,p_1)$ and $\min(p_2,l-p_2)$ above are bounded by $K\hat{k}$,
condition $(**)$ now follows easily.
\end{proofof}

\section{Notation for Artin groups}
\label{notation}
Our Artin groups will be defined in their standard presentations
\[ \langle x_1,\ldots ,x_n \mid {}_{m_{ij}}(x_i,x_j)=
{}_{m_{ij}}(x_j,x_i)\quad\hbox{\rm for each}\quad i \neq j \rangle. \]

We prove rapid decay for Artin groups satisfying the hypotheses of
Theorem~\ref{main_rd} with respect to the word length metric,
by verifying the conditions D1 and D2 above.
In order to do that we need to examine the structure of geodesics in
these groups; we build on the results of \cite{HoltRees}.

Given an Artin group $G$ as before we let $X$ be the set of generators
in the standard presentation and let $A$ be the set $X \cup X^{-1}$;
we call the elements of $A$
{\em letters}.
We shall generally use symbols like $x,y,z,t$ for generators in $X$ and
$a,b$ for letters in $A$.
A letter is {\em positive} if it is a generator, {\em negative} otherwise.
For each $x \in X$, we define $x$ to be the
{\em name} of the two letters $x$ and $x^{-1}$.
We say that a word $w \in A^*$ {\em involves} the generator $x$ if
$w$ contains a letter with name $x$, and we call $w$ a
{\em 2-generator} word if it involves exactly two of the generators.
Words in $A^*$ will be denoted by $u,v,w$ (possibly with subscripts) or
$\alpha,\beta,\gamma,\eta, \xi$. (Roughly speaking, the difference is that $u,v,w$
will be used for interesting subwords of a specified word, and the Greek
letters for subwords in which we are not interested.)
A {\em positive word} is one in $X^*$ and a {\em negative word} one in $(X^{-1})^*$; otherwise it is {\em unsigned}.
For $u,v \in A^*$, $u=v$ denotes equality as words, whereas $u=_Gv$ denotes
equality within the Artin group. The length of the word $w$ is denoted by $|w|$,
while as above $|w|_G$ denotes the length of a geodesic representative
and, for $g \in G$, $|g|$ denotes the length of a geodesic word representing
$g$.  A {\em syllable} in a word is a subword that is a power of a single
generator, and is not a subword of a higher power.

For any distinct letters $a$ and $b$ and a positive integer $r$, we define
alternating products ${}_r(a,b)$ and $(b,a)_r$. The product
${}_r(a,b)$, is defined, as it was earlier (over generators), to be the
word of length $r$ of alternating $a$'s and $b$'s starting
with $a$, while $(b,a)_r$ is defined to be the word of length $r$ of
alternating $a$'s and $b$'s ending
with $a$.
For example, $_6(a,b) = ababab = (a,b)_6$, $_5(a,b) = ababa = 
(b,a)_5$.
We define both $_0(a,b)$ and $(b,a)_0$ to be the empty word.
For any nonempty word $w$, we define $\f{w} $ and $\l{w} $ to be
respectively the first and last letter of $w$, and $\p{w}$  and $\s{w}$ to be
the maximal proper prefix and suffix of $w$. So $w=\p{w} \l{w} =\f{w} \s{w} $.

\section{Dihedral Artin groups}
\label{dihedral}
In this section, we prove that dihedral Artin groups of large type satisfy RD
by verifying the properties D1 and D2 in these groups.
As mentioned in the introduction, the fact that these groups satisfy
RD can be deduced immediately from results already in the literature, but the
techniques that we develop here are needed in the proof of Theorem
\ref{main_rd} in Section~\ref{largetype}.

\subsection{The structure of geodesics in dihedral Artin groups}
\label{sec:dihedral_geodesics}
We need some results on the structure 
of geodesics
from \cite{MairesseMatheus,HoltRees}.
We summarise what we need, and refer to those articles for details.

Let \[ \DA{m} = \langle x_1,x_2 \mid {}_m(x_1,x_2) = {}_m(x_2,x_1) \rangle \] be a
2-generator (dihedral) Artin group with $2 \le m < \infty$.
Conjugation by the Garside element \[ \Delta:={}_m(x_1,x_2)=_{\DA{m}}{}_m(x_2,x_1)\]
induces a permutation $\delta$ of order 2 or 1 (depending on the parity of $m$) on the
letters in $A$, and hence an automorphism $\delta$ of order 2 or 1 of the
free monoid $A^*$.

Let $w$ be a freely reduced word over $A=\{x_1,x_2,x_1^{-1},x_2^{-1}\}$.
Then we define $p(w)$ to be the minimum of $m$ and the length of
the longest subword of $w$ of alternating $x_1$'s and $x_2$'s (that is the
length of the longest subword of $w$ of the form $_r(x_1,x_2)$ or $_r(x_2,x_1)$).
Similarly, we define $n(w)$ to be the minimum of $m$ and the length of
the longest subword of $w$ of alternating $x_1^{-1}$'s and $x_2^{-1}$'s.
It is proved in \cite[Proposition 4.3]{MairesseMatheus} that $w$ is geodesic in $\DA{m}$ if and only if
$p(w) +n(w) \leq m$;
if $p(w)+n(w) < m$, then $w$ is the unique geodesic representative of the group
element it defines, but if $p(w)+n(w)=m$ then there are other representatives.
Note that a non-geodesic word is always unsigned. 

For example, suppose that $m=3$, so
$\DA{m}= \langle x_1,x_2 \mid x_1x_2x_1=x_2x_1x_2 \rangle.$
Then $w:=x_1x_2x_1$ and $w':=x_2x_1x_2$ are two geodesic representatives of
the same element with $p(w)=p(w')=3,n(w)=n(w')=0$, and
$w=x_1x_2^2x_1^{-1}$ and 
$w'=x_2^{-1}x_1^2x_2$ are two geodesic representatives of the same element
with $p(w)=p(w')=2,n(w)=n(w')=1$. In fact all four words are examples of {\em
critical words}, as defined in \cite{HoltRees}.

As in \cite{HoltRees},
we define a freely reduced word $w$ with $p(w)+n(w)=m$ to be critical if
one of the following holds, where $\xi$ is a subword with the obvious restrictions.
\begin{description}
\item[(i)] $w$ is unsigned of one of the two forms
${}_p(x,y)\xi (z^{-1},t^{-1})_n$ or
${}_n(x^{-1},y^{-1})\xi (z,t)_p$, with
$\{x,y\} = \{z,t\}=\{x_1,x_2\}$,
\item[(ii)] $w$ is positive of one of the two forms
${}_m(x,y) \xi$ or $\xi (x,y)_m$, and only the one positive alternating
subword of length $m$,
\item[(iii)] $w$ is negative of one of the two forms
${}_m(x^{-1},y^{-1}) \xi$ or $\xi (x^{-1},y^{-1})_m$, and only
the one negative alternating
subword of length $m$.
\end{description}

An involution $\tau$ on the set of critical words swaps critical words that
represent the same element.
We define $\tau$ by
\begin{eqnarray*}
{}_p(x,y)\,\xi\,(z^{-1},t^{-1})_n &\leftrightarrow^\tau&
{}_n(y^{-1},x^{-1})\,\delta(\xi )\,(t,z)_p,\\
{}_m(x,y)&\leftrightarrow^\tau& {}_m(y,x),\\
{}_m(x^{-1},y^{-1})&\leftrightarrow^\tau& {}_m(y^{-1},x^{-1})\\
{}_m(x,y)\,\xi &\leftrightarrow^\tau& \delta(\xi)\,(z,t)_m,
\quad\hbox{\rm where}\quad z=\l{\xi},\,\{x,y\}=\{z,t\},\\ 
{}_m(x^{-1},y^{-1})\,\xi &\leftrightarrow^\tau& \delta(\xi)\,(z^{-1},t^{-1})_m,
\quad\hbox{\rm where}\quad z=\l{\xi}^{-1},\,\{x,y\}=\{z,t\}.
\end{eqnarray*}

Where a critical word $w'$ occurs as a subword of a word $w$, we call the
substitution of the subword $w'$ by $\tau(w')$ a $\tau$-move on $w$.

The following lemma is proved in \cite[Lemma 2.3]{HoltRees}.

\begin{lemma}\label{geodlem}
Suppose that $w \in A^*$ is geodesic and $a \in A$.
If $wa$ is non-geodesic, then either $\l{w}=a^{-1}$
or $w$ has a critical suffix $v$ such that $\l{\tau(v)}=a^{-1}$.
Similarly, if $aw$ is non-geodesic, then either $\f{w}=a^{-1}$ or $w$ has
a critical prefix $v$ such that  $\f{\tau(v)}=a^{-1}$.
\end{lemma}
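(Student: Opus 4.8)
The plan is to prove Lemma~\ref{geodlem} by induction on $|w|$, analysing how the quantities $p(w)$ and $n(w)$ change when a letter is appended. The key observation, coming from \cite[Proposition 4.3]{MairesseMatheus}, is that $w$ is geodesic iff $p(w)+n(w)\le m$, so $wa$ being non-geodesic means $w$ is geodesic but $p(wa)+n(wa)=m+1$. Since appending a single letter can increase $p$ or $n$ by at most $1$, and cannot increase both, we must have either $p(w)+n(w)=m$ with the appended letter extending the active alternating subword, or a more delicate situation where a ``hidden'' longer alternating run is created by the new letter. I would treat the $wa$ case in full and note that the $aw$ case follows by the left-right symmetry of the defining relation (equivalently, by applying the result to the reverse word, noting that reversal preserves geodesics and maps critical suffixes to critical prefixes).

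First I would dispose of the trivial case: if $\l{w}=a^{-1}$ then $wa$ is not even freely reduced, and we are done (this is the first alternative in the statement). So assume $w$ is freely reduced, $\l{w}\ne a^{-1}$, and $wa$ is freely reduced but non-geodesic. Since $w$ is geodesic, $p(w)+n(w)\le m$; since $wa$ is not, $p(wa)+n(wa)\ge m+1$, forcing $p(w)+n(w)=m$ and exactly one of $p,n$ to jump by $1$ when $a$ is appended. Say WLOG $a$ is positive (the negative case is symmetric under $\delta$), so $p(wa)=p(w)+1$ and $n(wa)=n(w)$; then $a$ together with a maximal positive alternating suffix of $w$ of length $p(w)$ forms a positive alternating subword of length $p(w)+1$. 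The heart of the argument is to locate inside $w$ a critical suffix $v$ and show $\l{\tau(v)}=a^{-1}$.

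The main work, and the step I expect to be the obstacle, is identifying the critical suffix and verifying the $\tau$-image ends in $a^{-1}$. I would argue that because $p(w)+n(w)=m$, the suffix of $w$ that ``realises'' the short side (here the positive alternating run of length $p(w)$ reaching to $\l{w}$) extends leftward to a subword $v$ matching one of the critical templates (i)--(iii): the positive run of length $p(w)$ abuts an interface with a negative alternating run of length $n(w)=m-p(w)$ on its left (case (i)), or $p(w)=m$ (case (ii)), or $n(w)=m$ in the mirrored situation (case (iii)). One checks $v$ is freely reduced with $p(v)+n(v)=m$ and satisfies the relevant extra condition (e.g.\ ``only one alternating subword of length $m$''), so $v$ is critical; then one reads off $\tau(v)$ from the explicit table. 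For template (i), $\tau$ sends ${}_p(x,y)\,\xi\,(z^{-1},t^{-1})_n$ to ${}_n(y^{-1},x^{-1})\,\delta(\xi)\,(t,z)_p$, whose last letter is determined by the parity and the identity $\{x,y\}=\{z,t\}=\{x_1,x_2\}$; the point is that appending $a$ to $v$ would extend its positive run to length $p+1$, and correspondingly $\tau(v)$ has a positive run of length $p$ reaching its last letter, so appending $a$ to $\tau(v)$ would push $p(\tau(v))+n(\tau(v))$ to $m+1$ as well — but since $\tau(v)=_Gv$ and $v$ is geodesic, $\tau(v)$ is geodesic, which is only consistent with $\l{\tau(v)}=a^{-1}$ (the free-reduction escape). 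This final ``consistency forces cancellation'' deduction, together with the bookkeeping across all the sub-cases of which side is long and whether $a$ is positive or negative, is where the care is needed; the other templates are handled by the same principle applied to their $\tau$-rows in the table.

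Finally I would remark that the second sentence of the lemma, concerning $aw$, is obtained by reversing words: if $\overline{w}$ denotes the reverse of $w$, then $p$ and $n$ are reversal-invariant, so $\overline{w}$ is geodesic iff $w$ is, the critical templates are closed under reversal, and $\tau$ commutes appropriately with reversal; applying the already-proved first statement to $\overline{w}$ and $a$ and reversing back yields the claim about prefixes of $w$.
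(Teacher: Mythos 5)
This lemma is not proved in the present paper at all: it is quoted verbatim from \cite[Lemma 2.3]{HoltRees}, so there is no in-paper proof to compare with, and your argument has to stand on its own. Its core is sound for the \emph{unsigned} case: if $a$ is positive, $wa$ freely reduced and non-geodesic, then the Mairesse--Math\'eus criterion forces $p(w)+n(w)=m$ with $0<p(w),n(w)<m$, the maximal positive alternating subword of length $p$ is a suffix of $w$ extended by $a$, and the suffix $v$ of $w$ beginning at a maximal negative alternating subword of length $n$ matches template (i). Your ``consistency forces cancellation'' step then really does work there, because $\tau(v)$ ends in a negative run of length $n\ge 1$, so $p(\tau(v)a)+n(\tau(v)a)=p+n=m$; since $\tau(v)a=_G va$ is non-geodesic, the criterion leaves free reduction at the junction as the only possibility, i.e.\ $\l{\tau(v)}=a^{-1}$. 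The reduction of the $aw$ statement to the $wa$ statement by word reversal is also fine.

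The genuine gap is your final sentence claiming that ``the other templates are handled by the same principle.'' They are not. In the signed cases (templates (ii) and (iii)) $w$ is itself a positive or negative word, say negative with $n(w)=m$, and $a$ is positive. Then for \emph{any} negative geodesic word $u$ one has $p(ua)+n(ua)=1+m>m$, whether or not $\l{u}=a^{-1}$, so the non-geodesicity of $\tau(v)a$ is perfectly consistent with it being freely reduced: the criterion gives you nothing, and the ``free-reduction escape'' deduction collapses. In these cases you must argue directly: (a) exhibit a suffix $v$ of $w$ that genuinely matches a signed critical template, including the condition that it contain only one alternating subword of length $m$ --- this requires a careful choice (for example with $m=3$ and $w=x_1^{-1}x_2^{-1}x_1^{-1}x_2^{-1}$ the correct suffix is the alternating suffix of length exactly $3$, not the suffix starting at the beginning of the maximal run); and (b) read off from the explicit $\tau$-table that for such $v$ the last letter of $\tau(v)$ is the letter of the same sign as $\l{v}$ but with the \emph{other} name, which equals $a^{-1}$ precisely because $wa$ was assumed freely reduced (two letters of the same sign are equal iff they have the same name). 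Without this separate argument --- which is exactly the bookkeeping you deferred --- the proof covers only the unsigned case and is incomplete.
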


It is proved in \cite{HoltRees} that,
whenever $w$ is a freely reduced word that is not minimal
under the shortlex ordering, $w$ has a factorisation as $w_1w_2w_3$,
where $w_2$ is critical and
either $w_1\tau(w_2)w_3 \lexle w$ or $w_1\tau(w_2)w_3$ is not freely
reduced. In that case, we call the $\tau$-move on $w$ that replaces $w_2$ by
$\tau(w_2)$ together with any subsequent free reduction within $w_1\tau(w_2)w_3$ a
{\rm critical reduction} of $w$.
It follows from \cite[Theorem 2.4]{HoltRees} that
a succession of critical reductions reduces any word to its shortlex
minimal representative. Further, any two geodesic representatives are related by a 
sequence of $\tau$-moves.
In particular this implies that two representatives of the same group element
must either both be signed or both unsigned.

In order to deal with the reduction of non-geodesic words, we extend the
concept of $\tau$-moves.

A freely reduced  subword $u$ of a word $w$ is said to
be {\em over-critical} if it is has either of the forms
\[ {}_p(x,y)\xi (z^{-1},t^{-1})_n \quad{\rm or}\quad
 {}_n(x^{-1},y^{-1})\xi (z,t)_p, \]
with $p,n \le m$, $p+n > m$ and $\{x,y\} = \{z,t\}=\{x_1,x_2\}$,
and the extra condition that, if $p<m$, then $_p(x,y)$ or $(z,t)_p$
is a maximal positive alternating subword of $w$ and if $n<m$ then
$(z^{-1},t^{-1})_n$ or $_n(x^{-1},y^{-1})$ is a maximal negative alternating
subword
of $w$.

Note that we do not require that $p=p(w)$ or $n=n(w)$. But since
the conditions on $p,n$ force $p(w)+n(w)>m$, over-critical words
are necessarily non-geodesic.

We define $\tau$ on over-critical words by
\begin{eqnarray*}
\tau({}_p(x,y)\,\xi\,(z^{-1},t^{-1})_n)
&:=& {}_{m-p}(y^{-1},x^{-1})\,\delta(\xi )\,(t,z)_{m-n},\\
\tau({}_n(x^{-1},y^{-1})\,\xi\,(z,t)_p) 
&:=& {}_{m-n}(y,x)\,\delta(\xi )\,(t^{-1},z^{-1})_{m-p}.
\end{eqnarray*}
We refer to these as length reducing $\tau$-moves.
Note that any such move can also be achieved by an ordinary $\tau$-move
followed by free reduction.
So it follows from Lemma~\ref{geodlem} that any word can be reduced to a
geodesic by a sequence of length reducing $\tau$-moves and free reduction.
We call the move {\em positive} if $p=m$, {\em negative} if $n=m$ and
{\em unsigned} otherwise. (So if $p=n=m$ it is both positive and negative, but
we won't need to use that case.)

\begin{lemma}\label{deltafree}
Let $w$ be freely reduced with $0<p(w),n(w)<m$. Then $w$ can be
reduced to a geodesic using a sequence of unsigned length reducing
$\tau$-moves alone (i.e. with no free reduction).
\end{lemma}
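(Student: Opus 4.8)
The plan is to induct on a suitable complexity measure, and the key realization is that a length reducing $\tau$-move is, by construction, achievable as an ordinary $\tau$-move followed by free reduction; what must be controlled is that when $0<p(w),n(w)<m$ we never need the \emph{positive} or \emph{negative} variants (which would correspond to $p=m$ or $n=m$) and that the free reduction step can always be absorbed into a subsequent unsigned $\tau$-move, or avoided entirely. So first I would set up notation: write $w$ freely reduced with $0<p(w)<m$ and $0<n(w)<m$; by Mairesse--Matheus $w$ is geodesic iff $p(w)+n(w)\le m$, so if $w$ is already geodesic there is nothing to prove. Otherwise $p(w)+n(w)>m$, and I claim $w$ contains an unsigned over-critical subword. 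Indeed, $w$ has a maximal positive alternating subword of some length $p'\le m$ and a maximal negative one of length $n'\le m$ with $p'+n'>m$ (one can take $p'=p(w)$, $n'=n(w)$); since $w$ is freely reduced and unsigned, some maximal positive alternating block and some maximal negative alternating block must be adjacent (separated only by a subword $\xi$ of the appropriate restricted form — in fact the word between two ``extremal'' such blocks is forced to alternate in a controlled way), giving a subword of the form ${}_{p'}(x,y)\,\xi\,(z^{-1},t^{-1})_n'$ or its mirror, which is exactly an unsigned over-critical subword.

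Next I would perform the unsigned length reducing $\tau$-move on that over-critical subword. The move ${}_p(x,y)\,\xi\,(z^{-1},t^{-1})_n \mapsto {}_{m-p}(y^{-1},x^{-1})\,\delta(\xi)\,(t,z)_{m-n}$ strictly decreases $|w|$ by $2(p+n-m)>0$, since $|{}_{m-p}(\cdots)| + |\delta(\xi)| + |(\cdots)_{m-n}| = (m-p) + |\xi| + (m-n) = |w| - 2(p+n-m)$. The one subtlety is free reduction: after substitution the new word must still be freely reduced for the induction to carry through cleanly as a sequence of $\tau$-moves alone. Here I would check that the move, applied to a maximal over-critical subword, does not create a cancelling pair at either junction — the letter $\f{\delta(\xi)}$ cannot cancel against $\l{{}_{m-p}(y^{-1},x^{-1})}=x^{-1}$ or $y^{-1}$ because of the maximality conditions built into the definition of over-critical (the alternating blocks are required to be maximal in $w$, forcing the adjacent letter of $\xi$ to have a name outside $\{x_1,x_2\}$... wait, in the dihedral case there are only two generators, so instead the constraint is on sign and on which generator, and one argues that $\delta$ applied to $\xi$ flips signs appropriately so that no cancellation occurs). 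I would verify this case-by-case on the two forms of over-critical word; this is the step I expect to be the main obstacle, since it is exactly where one must exploit the precise definitions of $\delta$, of maximal alternating subword, and of the restrictions on $\xi$, rather than appealing to a soft argument.

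Finally I would close the induction: the new word $w'$ is freely reduced with $|w'|<|w|$, and I must check $0<p(w'),n(w')<m$ still holds (or that $w'$ is already geodesic, in which case we stop). The move replaces a positive block of length $p$ by a negative block of length $m-p$ and a negative block of length $n$ by a positive block of length $m-n$, with $0<m-p<m$ and $0<m-n<m$ since $0<p,n<m$; one checks that no \emph{longer} monochromatic alternating block is created elsewhere (again using maximality), so $p(w'),n(w')$ remain in the open interval $(0,m)$, or the word has become geodesic. By induction on $|w|$, finitely many unsigned length reducing $\tau$-moves reduce $w$ to a geodesic, with no free reduction ever required. I would remark that the hypothesis $0<p(w),n(w)<m$ is genuinely used twice — once to locate an unsigned (as opposed to positive or negative) over-critical subword, and once to guarantee the output stays in the same regime — which is why the lemma is stated with strict inequalities.
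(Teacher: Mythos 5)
Your argument is correct and its engine is the same as the paper's: iterate unsigned length reducing $\tau$-moves, using exactly the two facts the paper's (two-line) proof asserts, namely that such a move leaves the word freely reduced and does not increase $p(w)$ or $n(w)$, so the process stays in the regime $0<p,n<m$ and terminates by length. The one genuine difference is how the move is produced: the paper invokes Lemma~\ref{geodlem} (a critical suffix of the shortest non-geodesic prefix, whose $\tau$-image ends in the cancelling letter), whereas you exhibit an unsigned over-critical subword directly from a longest positive and a longest negative alternating block. That step is fine, and in fact easier than you make it: the definition of over-critical places no restriction whatever on $\xi$, so you may simply take the two extremal blocks (of lengths $p(w)$ and $n(w)$, automatically maximal since both are $<m$) together with \emph{everything} between them as $\xi$ -- no adjacency and no "controlled alternation" of the intervening word is needed, because the identity ${}_p(x,y)\,\xi\,(z^{-1},t^{-1})_n =_G {}_{m-p}(y^{-1},x^{-1})\,\delta(\xi)\,(t,z)_{m-n}$ rests only on $\Delta\xi=_G\delta(\xi)\Delta$ and holds for arbitrary $\xi$. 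The junction analysis you flag as the main obstacle is precisely the fact the paper itself states without proof, and it does go through: freely-reducedness of $w$ together with maximality of the end blocks pins down the letters adjacent to the replaced subword, and a short parity check on $m$, $p$, $n$ (tracking whether $\delta$ swaps the names) shows that at each of the four junctions the new letters neither cancel nor extend an alternating subword; combined with $m-p<n(w)$ and $m-n<p(w)$ this gives both preservation of free reduction and non-increase of $p$ and $n$, closing your induction.
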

\begin{proof} This follows from lemma~\ref{geodlem} together with the fact that applying an unsigned length
reducing $\tau$-move to $w$ results in a freely reduced word, and does not
increase $p(w)$ or $n(w)$.
\end{proof}

We shall also need the following lemma when we come to verify D2 for Artin
groups of extra large type.

\begin{lemma}\label{nclemma1}
Let $g,h \in DA(m)$.
If $gh \in \langle x_i \rangle$, then
$g$ and $h$ have geodesic factorisations $x_i^sw$ and $w^{-1}x_i^t$,
for some element $w$ and integers $s,t$.
\end{lemma}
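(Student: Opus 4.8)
The plan is to argue directly from the structure of geodesics in $\DA{m}$ recalled above. Suppose $gh \in \langle x_i\rangle$, say $gh =_G x_i^r$ for some $r \in \Z$. Pick geodesic words $u$ representing $g$ and $v$ representing $h$; then $uv$ represents $x_i^r$. The key observation is that $x_i^r$ is a signed element (positive if $r\ge 0$, negative if $r\le 0$), and since any two representatives of the same group element are either both signed or both unsigned (a consequence of \cite[Theorem 2.4]{HoltRees} recalled just before Lemma~\ref{deltafree}), every representative of $x_i^r$ is signed. After free reduction, $uv$ reduces to $x_i^r$; I would first handle the free-cancellation bookkeeping. Write $u = u'\alpha$ and $v = \alpha^{-1}v'$ where $\alpha$ is the maximal common cancelling block, so $u'v'$ is freely reduced and equal in the group to $x_i^r$. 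Since $u$ is geodesic, $u'$ is geodesic (a prefix of a geodesic) with right divisor $\alpha^{-1}$ removed — more precisely $g = \overline{u'}\,\overline{\alpha}$ is a geodesic factorisation with $\overline{\alpha}$ the element of $\alpha$, and similarly $h = \overline{\alpha}^{-1}\overline{v'}$. So setting $w := \overline{\alpha}^{-1}$ (i.e. $w^{-1} = \overline\alpha$) we would already be done if $\overline{u'} \in \langle x_i\rangle$ and $\overline{v'}\in\langle x_i\rangle$; the content is to show that the freely reduced word $u'v'$ representing a power of $x_i$ forces $u'$ and $v'$ individually to be powers of $x_i$ (as group elements), after possibly enlarging the cancelling block.

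The heart of the argument is the following claim: if $z$ is a freely reduced word with $z =_G x_i^r$, then $z$ can be split as $z = z_1 x_i^r z_1^{-1}$ as a \emph{word} up to the $\tau$-equivalence on geodesics — i.e. any geodesic for a power of $x_i$ has, after applying $\tau$-moves, the form prefix/power-of-$x_i$/inverse-of-prefix. To see this I would use that $z$ is geodesic (it is freely reduced and, being a representative of the signed element $x_i^r$, has $p(z)+n(z)\le m$ only in the trivial way — in fact since $x_i^r$ involves only one generator and any alternating subword of length $2$ would involve the other generator, the reductions that matter are simple). Concretely: among all geodesic representatives of $x_i^r$, consider one, say $z$; by \cite[Theorem 2.4]{HoltRees} it is related to the shortlex normal form by $\tau$-moves, and the shortlex normal form of $x_i^r$ is just $x_i^r$ itself. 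Then I would track the $\tau$-moves backwards. Since $\tau$-moves on words that are \emph{not} $2$-generator in the relevant syllables act only locally and preserve the "$x_i^s$-prefix / $x_i^t$-suffix" shape, an induction on the number of $\tau$-moves relating $z$ to $x_i^r$ shows $z$ has a geodesic factorisation $x_i^s\,w$ and a corresponding $w^{-1}x_i^{r-s}$ structure. Applying this to $u$ and $v$ and matching the split across the $g,h$ boundary gives the statement.

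The main obstacle I anticipate is exactly this tracking of $\tau$-moves: a $\tau$-move can act across the nominal boundary between the "$x_i$-part" and the rest, and in the critical/over-critical forms the definition of $\tau$ mixes in $\delta$, so one must check that when the word is (group-)equal to a pure power of $x_i$ the only $\tau$-moves available are the degenerate ones of type (ii)/(iii) with $\xi$ empty, or type (i) with matched ends — i.e. that a genuine "mixing" $\tau$-move would produce an unsigned word, contradicting that every representative of $x_i^r$ is signed. Once that dichotomy is pinned down, the induction is routine. An alternative, possibly cleaner route avoiding explicit $\tau$-bookkeeping: use that $\langle x_i\rangle$ is a parabolic subgroup of $\DA{m}$ and invoke the known normal-form / subword structure for such subgroups (the "$\pi$-normal form" or the fact, going back to Appel–Schupp and used throughout \cite{HoltRees}, that geodesic representatives of elements of a parabolic are supported, up to geodesic equivalence, on that parabolic together with a conjugating tail). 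I would present the $\tau$-move version as primary since it keeps the proof self-contained within the machinery already recalled, and only fall back on the parabolic-subgroup fact if the case analysis of $\tau$-moves becomes unwieldy.
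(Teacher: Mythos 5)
Your argument rests on a key claim that is either false or vacuous, and the step where the real content lies is deferred rather than proved. You assert that a freely reduced word $z$ with $z =_G x_i^r$ is geodesic ("has $p(z)+n(z)\le m$ only in the trivial way"); this is false: in $\DA{3}$ the freely reduced word $x_1x_2x_1x_2^{-1}x_1^{-1}$ represents $x_2$, and $x_1x_2x_1x_2^{-1}x_1^{-1}x_2^{-1}$ represents the identity. If instead you restrict the claim to geodesic representatives of $x_i^r$, it becomes vacuous: since $p+n=1<m$ for a power of a generator, $x_i^r$ is its own unique geodesic representative, and the claim then says nothing about $u'$ and $v'$, which is where the difficulty sits. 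The word $u'v'$ in your set-up is freely reduced but typically non-geodesic (that is the whole point: $|gh|$ may be far smaller than $|g|+|h|$), so the reduction from $u'v'$ to $x_i^r$ proceeds by length reducing $\tau$-moves and free cancellation, and the step you call routine --- showing that these moves can be organised so that the surviving power of $x_i$ is flanked on the left by a divisor of $g$ and on the right by a divisor of $h$ whose complementary parts are inverse to one another, i.e.\ matching the split across the $g,h$ boundary with a single common $w$ --- is precisely the statement of the lemma and is never carried out. Your proposed way around the ``mixing move'' obstacle also fails: the signed/unsigned dichotomy quoted from the paper applies only to \emph{geodesic} representatives of an element (every non-geodesic word is unsigned), so a mixing move that produces an unsigned intermediate word yields no contradiction. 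The parabolic-subgroup fallback is only gestured at, and in any case concerns representatives of $x_i^r$ (which are unique) rather than the factorisations of $g$ and $h$ that the lemma requires.

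For comparison, the paper's proof works entirely with group elements and avoids word-level bookkeeping. Writing $gh = a^k$ with $a = x_i^{\pm 1}$ and $k\ge 0$, so that $g = a^kh^{-1}$: either $|ah^{-1}| = 1 + |h|$, in which case $|g| = k + |h|$ and one may take $w = h^{-1}$ and $t=0$; or $ah^{-1}$ is not a length-increasing product, in which case Lemma~\ref{geodlem} shows $a^{-1}$ is a left divisor of $h^{-1}$, and stripping off the maximal power, $h^{-1} = a^{-l}w$, yields geodesic factorisations $g = a^{k-l}w$ and $h = w^{-1}a^{l}$. An argument at this level --- divisors of elements plus Lemma~\ref{geodlem}, rather than tracking reduction sequences on concatenated geodesic words --- is what your sketch would need to become a proof.
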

\begin{proof} 
The result is trivial if either $g$ or $h$ (and hence both) is in
$\langle x_i\rangle$, so we suppose not.

Suppose that $g = a^k h^{-1}$ with $a = x_i^{\pm 1}$ and $k \ge 0$.
Suppose first that $|ah^{-1}| = 1 + |h|$. Then $|g| = k + |h|$ and the result
is clear. Otherwise, by Lemma~\ref{geodlem}, $a^{-1}$ is a left divisor of
$h^{-1}$. Let $l$ be maximal such that $a^{-l}$ is a left divisor of $h^{-1}$,
and let $h^{-1} = a^{-l} w$.
Then $a^{k-l} w$ is a geodesic factorisation of $g$, and the
result follows.
\end{proof}

\subsection{Verifying D1 and D2 for dihedral Artin groups}
\label{sec:dihedral_D1D2}
Our aim in this section is to verify that the properties D1 and D2 hold in any
dihedral Artin group $G$.
In fact, it can be shown that the  kernel of the natural homomorphism of
$\DA{m}$ onto the dihedral group of order $2m$ in which the images of $x_1$ and
$x_2$ have order 2 is a direct product of an infinite cyclic group
and a free group of rank $m-1$. The fact that $\DA{m}$ has rapid decay then
follows from \cite[Propositions 2.1.5, 2.1.9]{J90}, so we do not need
to verify D1 and D2 in order to prove rapid decay in these groups.  However we need
the properties here in order to prove that corresponding properties hold for
Artin groups satisfying the hypotheses of Theorem~\ref{main_rd}.

We shall assume throughout this section that $m \ge 3$.
We assume also that $k \leq l$, and deduce D1 and D2
in that case; the case $k \geq l$ then follows immediately by symmetry.

We first need to define our set $\PP$ of permissible geodesic factorisations of
elements $g \in G$. For unsigned elements $g$, we define $\PP(g)$ to be the set
all geodesic factorisations of $g$; that is,
\[ \PP(g) := \{(g_1,g_2) \in G^2 : g_1g_2=g,\,|g_1| + |g_2| = |g|\}.\]
If $m$ is infinite (that is, the 2-generator group is free) then for any
element $g$ we define $\PP(g)$  to be its set of geodesic factorisations.
From now on we shall assume that $m < \infty$.

Unfortunately, for $m<\infty$,
if we adopt the above definition of $\PP(g)$ for signed elements $g$, then D1
does not hold, so we are forced to use a more restrictive definition, which
significantly increases the technical complications in the proofs.

For a positive (respectively negative) element $g$, we define $d(g)$ to be
the maximal $k$ such that $\Delta^k$ (respectively $\Delta^{-k}$) is a
divisor of $g$. Then we call a geodesic factorisation $g_1g_2 = g$ of $g$ 
{\em $\Delta$-decreasing} if $d(g_1) + d(g_2) < d(g)$.
We define $\PP(g) := \PP^1(g) \cup \PP^2(g)$, where $\PP^1(g)$ is the
set of factorisations $g_1g_2$ for which at least one of $g_1,g_2$ is
represented by geodesic words with at most two syllables
(i.e. either $g_1$ or $g_2$ equals $a^sb^t$ with $a,b \in A$,
$s,t \ge 0$), and $\PP^2(g)$ is the set of geodesic factorisations of
$g$ that are not $\Delta$-decreasing.  That is,
$$\PP^2(g) := \{(g_1,g_2) \in G^2 : g_1g_2=g,\,|g_1| + |g_2| = |g|, \,
d(g_1)+d(g_2)=d(g) \}.$$
(In fact we could omit the factorisations in $\PP^1$ and still obtain
D1 and D2 in the dihedral case, but we will need them in the next section in
order to prove the conditions when there are more than two generators;
that proof is easier if we include those
factorisations already for the dihedral case.)
We say that a left (respectively right) divisor $h$ of $g$ lies in
$\PP^i_l(g)$ (respectively $\PP^i_r(g)$) for $i=1$ or 2 if
$(h,h^{-1}g) \in \PP^i(g)$ (respectively $(gh^{-1},h) \in \PP^i(g)$),
and let $\PP_l(g) := \PP^1_l(g) \cup \PP^2_l(g)$ and
$\PP_r(g) := \PP^1_r(g) \cup \PP^2_r(g)$.

Our first aim is to prove Property D1.
In order to do that we need to examine the set of geodesic words 
representing a given element.
Let $w$ be a geodesic word with $p(w)+n(w)=m$;
we write $p:= p(w), n:= n(w)$. We have seen that
$w$ can be reduced to its shortlex normal form by a sequence of lex-reducing
critical reductions. It follows that if $w'$ is another geodesic word with
$w=_G w'$, then $w$ can be transformed to $w'$ by a sequence of $\tau$-moves.

First suppose that $p,n > 0$.
The word $w$ has the form $\eta_0w_1\eta_1 \cdots w_s \eta_s$,
where each $w_i$ has the form $(x,y)_p$ or $(x^{-1},y^{-1})_m$ with
$\{x,y\} = \{x_1,x_2\}$, and $p(\eta_i)<p$, $m(\eta_i)<m$ for all $i$.
A $\tau$-move has the effect of changing the signs of $w_i$, $w_j$ for some
$i<j$ for which $w_i,w_j$ have opposite signs, and replacing the subword
between them by its image under $\delta$.  Denote this $\tau$-move by $(i,j)$.
So $w' = \eta'_0w'_1\eta'_1 \cdots w'_s \eta'_s$, where each
$\eta'_r = \eta_r$ or $\delta(\eta_r)$, and all of the subwords $w_r,w'_r$
are maximal alternating.
But note that although $|\eta'_r|=|\eta_r|$, it is not necessarily
true that $|w_r|=|w'_r|$.

\begin{lemma}
\label{unsigned_lem} Let $w,w'$ be geodesic words as above.
If, for some r with $1 \le r \le s$, the number of positive
$w_i$ with $1 \le i \le r$ is equal to the number of positive $w'_i$
 with $1 \le r \le s$, then $\eta'_0w'_1\eta'_1 \cdots w'_r =_G
\eta_0w_1\eta_1 \cdots w_r$ and $\eta_r=\eta'_r$.
\end{lemma}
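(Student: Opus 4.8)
The plan is to exploit the fact, recalled just before the lemma, that since $w=_Gw'$ and both are geodesic, $w'$ can be obtained from $w$ by a finite sequence of $\tau$-moves; write $w=u_0\to u_1\to\cdots\to u_N=w'$, each step a single $\tau$-move. Because $p(w),n(w)>0$, no $u_t$ has a positive or negative alternating subword of length $m$, so each move is of type (i): it acts on two alternating blocks at positions $i<j$ of opposite sign, switches the sign (and swaps the two generator names) of each, replaces the subword strictly between them by its $\delta$-image, and changes nothing before the $i$-th block or after the $j$-th. As recorded in the discussion before the lemma, every $u_t$ again has a block decomposition of the same shape — the same number $s$ of alternating blocks, the $a$-th connector lying in $\{\eta_a,\delta(\eta_a)\}$, the $a$-th block of length $p$ or $n$ according to its sign — so the index $r$ selects a well-defined block and connector in every $u_t$. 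Write $P_r(u_t)\in\DA{m}$ for the prefix $\eta_0w_1\eta_1\cdots w_r$ of $u_t$ (ending at the $r$-th block), and $P_r:=P_r(u_0)=P_r(w)$.

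I would then classify each move in the sequence, relative to $r$. Call a move on blocks $i<j$ \emph{straddling} if $i\le r<j$. A non-straddling move ($j\le r$ or $i>r$) neither touches the connector at position $r$ nor alters the number of positive blocks among positions $1,\dots,r$, and it fixes $P_r(\cdot)$ as a group element: when $j\le r$ it is a $\tau$-move on a critical subword lying inside the prefix and $\tau$ preserves group elements; when $i>r$ the prefix is untouched. A straddling move flips the sign of exactly one block with index $\le r$, namely the $i$-th, so it changes the positive-block count among positions $1,\dots,r$ by $+1$ or $-1$; and, since $i\le r\le j-1$, it replaces the connector at position $r$ by its $\delta$-image and applies $\delta$ to the whole part of the prefix after the $i$-th block.

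The heart of the argument is the effect of a straddling move on $P_r$ as a group element. With the prefix written as $\rho\,v\,\mu$ — $v$ the $i$-th block, $\rho$ the part before it, $\mu$ the part after it — the move replaces it by $\rho\,\tau(v)\,\delta(\mu)$, where (mild abuse) $\tau(v)$ is the opposite-sign block produced from $v$ by the type-(i) formula, of length $n$ if $v$ has length $p$ and conversely. A short computation from the defining relation shows that one of $\tau(v)^{-1}v$, $v^{-1}\tau(v)$ is a positive alternating word of length $p+n=m$ in $x_1,x_2$, hence equals $\Delta$; so $\tau(v)=_G v\Delta^{-1}$ when $v$ is positive and $\tau(v)=_G v\Delta$ when $v$ is negative. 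Since $\delta(\cdot)$ is conjugation by $\Delta$ and $\delta^2=\mathrm{id}$ (so $\Delta^2$ is central), it follows that $\rho\,\tau(v)\,\delta(\mu)=_G\rho\,v\,\mu\,\Delta^{\varepsilon}$, where $\varepsilon\in\{+1,-1\}$ is precisely the change this straddling move makes to the positive-block count among positions $1,\dots,r$. Thus each straddling move multiplies $P_r(\cdot)$ on the right by $\Delta^{\varepsilon}$ with $\varepsilon$ equal to its effect on the count, and each non-straddling move multiplies it by $\Delta^0$.

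To finish: the non-straddling moves leave the positive-block count at $r$ fixed, so its total change over the whole sequence is $\sum\varepsilon$ taken over the straddling moves, and by hypothesis this is $0$. Therefore $P_r(w')=_GP_r(w)\cdot\Delta^0$, i.e. $\eta'_0w'_1\eta'_1\cdots w'_r=_G\eta_0w_1\eta_1\cdots w_r$, the first assertion. Writing $q^{+}$, $q^{-}$ for the numbers of straddling moves with $\varepsilon=+1$, $\varepsilon=-1$, the vanishing of the net change gives $q^{+}=q^{-}$, so the total number $q^{+}+q^{-}$ of straddling moves is even; since each of them applies $\delta$ to the connector at position $r$, since $\delta^2=\mathrm{id}$, and since no other move disturbs that connector, $\eta_r=\eta'_r$. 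I expect the main effort to be the block-level identity $\tau(v)=_Gv\Delta^{\mp1}$ (a direct but slightly delicate manipulation of alternating words via the defining relation), together with the — more bookkeeping than depth — verification that the block-and-connector indexing is genuinely stable along the whole sequence, so that "position $r$" is unambiguous throughout.
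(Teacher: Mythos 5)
Your argument is correct, but it takes a genuinely different route from the paper's. The paper proves the lemma by induction on $r$: the first block $w'_1$ can only be $w_1$ or its opposite-sign partner, and when the signs disagree the hypothesis supplies a block $w'_i$ with $2\le i\le r$ of sign opposite to $w'_1$, so that performing the move $(1,i)$ on $w'$ realigns the first blocks without changing the group element of the prefix or $\eta_r$; induction then finishes. You instead run a conservation argument along the whole $\tau$-move sequence: non-straddling moves fix the prefix element, the connector $\eta_r$ and the positive-block count, while each straddling move multiplies the prefix on the right by $\Delta^{\varepsilon}$ (via the block identity ${}_n(y^{-1},x^{-1})=_G{}_p(x,y)\Delta^{-1}$ together with $\Delta^{\pm1}\delta(\mu)=_G\mu\Delta^{\pm1}$, which uses centrality of $\Delta^2$), applies $\delta$ to $\eta_r$, and changes the count by exactly $\varepsilon$; equality of the counts then forces the accumulated power of $\Delta$ to vanish and the number of straddling moves to be even, giving both conclusions. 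The two points you flag as needing care are indeed fine: the block identity is exactly the computation you sketch (the inverse of the new block times the old block is a positive alternating word of length $p+n=m$, hence equals $\Delta$ in $G$), and the stability of the block-and-connector indexing along the sequence is what the paragraph preceding the lemma asserts, since every intermediate word is geodesic and unsigned, so $p(u_t)=p$, $n(u_t)=n$ persist and every critical subword is of type (i) aligned with block boundaries. Your route costs more bookkeeping than the paper's short induction, but it yields a sharper statement: it shows $\eta'_0w'_1\cdots w'_r=_G(\eta_0w_1\cdots w_r)\Delta^{d}$, where $d$ is the difference of the positive-block counts, with the lemma as the case $d=0$.
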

\begin{proof}  This is by induction on $r$.
Since none of the $\tau$-moves changes $\eta_0$, we have $\eta_0=\eta'_0$, and
so the result is true for $r=0$.

So suppose that $r \ge 1$.
Since $w_1$ can only be changed by a transformation $(1,i)$ with $1<i$,
there are only two possible $w_1'$, one of which is $w_1$ and the
other a word of opposite sign to $w_1$.
So if $\sgn(w_1) = \sgn(w'_1)$, then $w_1=w_1'$ and the result follows by
induction. Otherwise, we have $\{w_1,w_1'\} =
\{ {}_p(x,y),\,{}_m(y^{-1},x^{-1}) \}$.
From the hypothesis, there must be both positive and negative $w_i$
with $i\le r$, so there is a $w_i'$ with $2 \le i \le r$ and
$\sgn(w_i') = -\sgn(w_1')$. We can do the move $(1,i)$ to $w'$ giving $w''$.
Such a move does not change the group element
$\eta'_0w'_1\eta'_1 \cdots w'_r$ or $\eta'_r$, but it does change
$w_1'$ back to $w_1$,
so the result follows by induction applied to $w, w''$.
\end{proof}

For a positive element $g$ (the negative case is similar), let $r=d(g)$.
Then, for a geodesic factorisation $g = g_1g_2$ in $\PP^2(g)$, we have
$d(g_1) = s$ and $d(g_2) = r-s$ for some $0 \le s \le r$, and then
$g_1$ has $\Delta^s$ as a left divisor and $g_2$ has $\Delta^{r-s}$ as
a right divisor.

\begin{lemma}
\label{signed_lem} Let $w,w'$ be positive geodesic words representing
$g \in G$ such that $w$ and $w'$ both have $\Delta^s$ as a prefix and
$\Delta^{r-s}$ as suffix for some $s$, where $r = d(g)$. Then $w=w'$.
\end{lemma}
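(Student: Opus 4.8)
The plan is to strip off the common Garside prefix and suffix and reduce to the unsigned (or trivial) case. Write $g = \Delta^s h \Delta^{r-s}$ where, by hypothesis, $w = \Delta^s u \Delta^{r-s}$ and $w' = \Delta^s u' \Delta^{r-s}$ as words, with $u, u'$ positive geodesic words representing the common element $h := \Delta^{-s} g \Delta^{-(r-s)}$. It suffices to prove $u = u'$. First I would argue that $d(h) = 0$: indeed $\Delta^s$ is the maximal power of $\Delta$ dividing $g$ on the left (since $r = d(g)$ and $\Delta^s$ is a prefix of a \emph{geodesic} word for $g$, a longer power would contradict either maximality of $s$ via the factorisation or geodesy), and similarly $\Delta^{r-s}$ is the maximal power dividing on the right; since $\Delta$ is central up to the automorphism $\delta$, any $\Delta$ left- or right-dividing $h$ would lift to a larger power dividing $g$, a contradiction. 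So $h$ is a positive element with $d(h) = 0$, equivalently $p(u) = p(u') < m$ (a positive word $v$ has $\Delta$ as a divisor precisely when $p(v) = m$, using the $\tau$-moves of type (ii)).

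Now since $u$ and $u'$ are positive with $p(u), p(u') < m$, we have $p(u) + n(u) = p(u) < m$ and likewise for $u'$; by the Mairesse–Matheus criterion quoted before Lemma~\ref{geodlem}, a freely reduced word $v$ with $p(v) + n(v) < m$ is the \emph{unique} geodesic representative of the element it defines. Hence $u = u'$ as words, which is what we wanted.

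The one gap to fill carefully is the claim $d(h) = 0$, i.e. that peeling off exactly $\Delta^s$ on the left and $\Delta^{r-s}$ on the right leaves no further $\Delta$. The cleanest route uses the structure of geodesics: any positive geodesic word $w$ for $g$ factors as $\eta_0 w_1 \eta_1 \cdots w_s \eta_s$ with each $w_i$ a maximal alternating subword of length $p$ or $m$ and each $\eta_i$ having $p(\eta_i) < p$, so that $d(g)$ equals the number of $w_i$ of length $m$, which is an invariant of $g$ by Lemma~\ref{unsigned_lem} applied with the full range. If $w$ has $\Delta^s$ as a prefix and $\Delta^{r-s}$ as a suffix and $r = d(g)$, then the remaining middle word $u$ contains no alternating subword of length $m$ beyond those already accounted for, forcing $p(u) < m$; I would make this precise by counting the maximal alternating blocks of length $m$ in $w$ versus those in the prefix $\Delta^s$ and suffix $\Delta^{r-s}$ separately, using that $\Delta = {}_m(x_1,x_2)$ is itself a single maximal alternating block of length $m$. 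I expect this bookkeeping — matching up maximal alternating subwords across the boundaries $\Delta^s \mid u \mid \Delta^{r-s}$, and in particular ruling out that $u$ begins or ends with a long alternating run that merges with the adjacent $\Delta$ to create a hidden extra Garside factor — to be the main obstacle; everything after $d(h) = 0$ is immediate from uniqueness of subcritical geodesics.
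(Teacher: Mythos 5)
Your proposal is correct and follows essentially the same route as the paper: strip off the common prefix $\Delta^s$ and suffix $\Delta^{r-s}$, note that the middle element has $d$ equal to $0$ since any further $\Delta$-divisor would lift (via $\delta$-twisting) to $\Delta^{r+1}$ dividing $g$, and conclude from the Mairesse--Matheus criterion that a positive word with $p<m$ is the unique geodesic representative of its element. The word-level bookkeeping you flag as the ``main obstacle'' is unnecessary: divisibility by $\Delta$ is a property of the group element, not of how alternating blocks sit in the word, so the element-level lifting argument you already gave closes the point, exactly as the paper treats it.
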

\begin{proof} We have $w = \Delta^s \eta  \Delta^{r-s}$,
$w' = \Delta^s \eta'  \Delta^{r-s}$ with $\eta =_G \eta'$. Since $d(g)=r$,
we have $d(\eta) = d(\eta')=0$ so by~\cite[Proposition 4.3]{MairesseMatheus}
$\eta_G$ has
a unique geodesic representative, and hence $\eta=\eta'$ and $w=w'$.
\end{proof}

\begin{lemma}\label{p1count}
Let $g \in G$ be a positive element and let $l \ge 0$. Then, for $a,b \in A$
with $\{a,b\} = \{x_1,x_2\}$, the number of right divisors of $g$ of the form
$a^sb^t$ with $s+t=l$ is at most $d(g)+1$.
\end{lemma}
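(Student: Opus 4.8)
The plan is to show that the right divisors of $g$ of the form $a^sb^t$ with $s+t=l$ are essentially controlled by the power of $\Delta$ at the right end, so that there are at most $d(g)+1$ of them. First I would fix notation: write $r := d(g)$, and suppose $h = a^sb^t$ (with $\{a,b\}=\{x_1,x_2\}$, $s,t\ge 0$, $s+t=l$) is a right divisor of $g$, so $g = g' h$ with $|g'|+|h|=|g|$. Since $g$ is positive, every geodesic representative of $g$ is positive (a geodesic word equivalent to a positive word cannot be unsigned, by the remarks following Lemma~\ref{geodlem} on critical reductions and $\tau$-moves preserving signedness), and hence $g'$ and $h=a^sb^t$ are both positive, so $a,b \in X$ and $h = x_i^s x_j^t$ with $\{i,j\}=\{1,2\}$.

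The key step is to argue that two such right divisors $h_1 = x_i^{s_1}x_j^{t_1}$ and $h_2 = x_i^{s_2}x_j^{t_2}$ with the same $l$ and the same ordered pair $(i,j)$, but with $t_1 < t_2$, force the two corresponding geodesic representatives of $g$ to differ in the power of $\Delta$ that sits at their right-hand end. Concretely, I would pass to geodesic words $w_1, w_2$ for $g$ ending in $h_1, h_2$ respectively. Both are positive geodesic words representing $g$, so by the discussion before Lemma~\ref{unsigned_lem} they are related by a sequence of $\tau$-moves, which here (in the purely positive case, where $n(w)=0$) amount to replacing a maximal alternating block ${}_m(x,y)$ by ${}_m(y,x)$ — equivalently, sliding a copy of $\Delta$ past an adjacent syllable. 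The point is that a suffix of the form $x_i^s x_j^t$ with $t$ syllable-letters of $x_j$ at the very end "absorbs" $\min(t, ?)$ worth of a trailing $\Delta$: more precisely, I would show that if $w$ is a positive geodesic word for $g$ with suffix $x_i^s x_j^t$, then the maximal trailing power $\Delta^c$ dividing $g$ on the right that is "visible" at the end of $w$ is determined by $\min(s,t)$ and the parities involved, and that as $(s,t)$ ranges over pairs with $s+t=l$ that actually occur as right divisors, the associated value of $d$-at-the-right-end takes at most $d(g)+1$ distinct values, namely $0,1,\dots,d(g)$. Combined with Lemma~\ref{signed_lem} — which says that a positive geodesic word is uniquely determined by its leading and trailing powers of $\Delta$ (here applied with the trailing power being whatever is forced and the leading power absorbing the rest) — this pins down $w$, and hence $h$, once that $d$-value is fixed. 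A cleaner way to phrase the same idea: I would set up a map from the set of such right divisors to $\{0,1,\dots,d(g)\}$ recording, for the right divisor $h$, the integer $d(gh^{-1})$ together with enough bookkeeping, and show it is injective; since the target has $d(g)+1$ elements, we are done.

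The main obstacle I anticipate is the bookkeeping around the two-syllable suffix $x_i^s x_j^t$ versus the alternating structure of $\Delta = {}_m(x_1,x_2)$: a word like $x_i^s x_j^t$ only has an alternating subword of length $2$ (at the junction), so when $t$ (or $s$) is large the suffix overlaps $\Delta$ only in a short alternating segment, and I must be careful about how much of a trailing $\Delta^c$ can actually be rewritten to end in $x_j^t$. The case analysis splits according to whether the last syllable has exponent $1$ or $\ge 2$, and according to the parity of $m$ (which governs whether $\delta$ is trivial), and these interact with which of $x_1,x_2$ ends $\Delta$. I expect this to be the delicate part; everything else (positivity of geodesic representatives, uniqueness from Lemma~\ref{signed_lem}, and the final counting) is routine once that structural claim about trailing $\Delta$-powers is established.
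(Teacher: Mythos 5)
You have left the heart of the lemma unproved, and the specific injection you propose cannot work. The whole content of the statement is the ``structural claim about trailing $\Delta$-powers'' that you explicitly defer, and the invariant you name --- sending a right divisor $h=a^sb^t$ of length $l$ to $d(gh^{-1})$, valued in $\{0,1,\dots,d(g)\}$ --- is simply not injective. Take $m=3$, $a=x_1$, $b=x_2$ and $g=\Delta^2$, so $d(g)=2$ and $|g|=6$. Then $a^2$, $ab$ and $b^2$ are all right divisors of $g$ of length $2$, since $ga^{-2}=_G baab$, $g(ab)^{-1}=_G abab$ and $gb^{-2}=_G ab^2a$ are geodesic factorisations; but $d(ga^{-2})=d(gb^{-2})=0$ while $d(g(ab)^{-1})=1$, so $a^2$ and $b^2$ are not separated by your invariant. (The same collapse already occurs for $g=\Delta$ and $l=1$: both $x_1$ and $x_2$ are right divisors and both quotients have $d=0$.) Moreover, in the $\Delta^2$ example the bound $d(g)+1=3$ is attained, so any injection into $\{0,\dots,d(g)\}$ would have to be a bijection; no ``bookkeeping'' inside the proof can repair a map that genuinely takes the same value twice, and enlarging the target with extra coordinates destroys the count. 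The surrounding remarks (positivity of all geodesic representatives of a positive element, the appeal to Lemma~\ref{signed_lem}) are correct but do not supply the missing step, so the proposal has a genuine gap exactly where the work has to be done.

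For comparison, the paper's proof avoids any divisor-by-divisor invariant. It inducts on $l$: if $a^l$ (respectively $b^l$) is not a right divisor of $g$, then every divisor of the required form ends in $b$ (respectively begins with $a$) and the case $l-1$ applies to $gb^{-1}$ (respectively to $g$). In the remaining case both $a^l$ and $b^l$ are right divisors, and the key point is that $g$ cannot have both $a^{d+1}$ and $b^{d+1}$ as right divisors, where $d=d(g)$: writing $g=g'\Delta^d$ with $d(g')=0$ and analysing the positive elements $\Delta^d a^{-d}$ and $\Delta^d b^{-d}$ (each has a unique geodesic representative, a concatenation of positive alternating blocks of length $m-1$, and their first letters differ), exactly one of $ga^{-d}$, $gb^{-d}$ fails to be divisible by $\Delta$, and its unique geodesic representative then ends in the wrong letter to allow a further power of $a$ (respectively $b$) on the right. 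Hence $l\le d$, and the conclusion follows because there are only $l+1\le d(g)+1$ pairs $(s,t)$ with $s+t=l$. If you want to salvage your strategy you would need a provably injective invariant, but the natural route is the paper's: bound $l$ by $d(g)$ and count pairs.
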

\begin{proof}
The proof is by induction on $l$, the case $l=0$ being trivial.
We may suppose that $a^l$ is a right divisor of $g$, since otherwise all right
divisors of $g$ of the required form end in $b$, and the result follows from
the case $l-1$ applied to $gb^{-1}$.
Similarly, we may assume that $b^l$ is a right divisor of $g$, since
otherwise the suffixes of the required form all begin with $a$, and
the result follows from the case $l-1$ applied to $g$.

Let $d=d(g)$. We may assume that $d>0$, since otherwise $g$ has a unique
geodesic representative.
It is straightforward to prove by induction on $t$ that, for any $t>0$,
the group element $g_{a,t} := \Delta^t a^{-t}$ is positive with $d(g_{a,t}) = 0$ and
$\l{g_{a,t}} = b$ (that is, the unique geodesic representative ends in $b$);
in fact the unique geodesic representative is a concatenation of positive
alternating words each of length $m-1$. 
Similarly $g_{b,t} = \Delta^t b^{-t}$ has $d(g_{b,t})=0$
with $\l{g_{b,t}}=a$ and, since $g_{b,t}$ is obtained from $g_{a,t}$ by
interchanging $a$ and $b$, we have $\f{g_{a,t}} \ne \f{g_{b,t}}$.
We write $g = g' \Delta^d$ with $d(g')=0$, and observe that
the element $g'g_{a,d}=ga^{-d}$ is divisible by $\Delta$ if and only if $g'$ ends in a 
positive letter other than $\f{g_{a,t}}$, and similarly for the element
$g'g_{b,d}=gb^{-d}$; hence exactly one
of the two group elements $g' g_{a,d}$ and $g' g_{b,d}$
is not divisible by $\Delta$.
If $g' g_{a,d} = ga^{-d}$ is not divisible by $\Delta$, then its unique
geodesic representative ends in $\l{g_{a,d}}=b$; $ga^{-d}$ cannot have $a$ as a right
divisor, and $g$ cannot have $a^{d+1}$ as a right divisor;
Similarly we see that if $g'g_{b,d}$ is not divisible by $\Delta$,
then $g$ cannot have $b^{d+1}$ as a right divisor.
Hence $g$ cannot have both $a^{d+1}$ and $b^{d+1}$
as right divisors.
It now follows from the preceding paragraph that $l \le d$, and then the result follows immediately, by counting the number of pairs
$(s,t)$ with $s+t=l$.
\end{proof}

We can now prove D1, which we can express as follows:

\begin{corollary}\label{geodcor}
There is a polynomial $P(x)$ with the following property:
for any $g \in G$ the number of left divisors of $g$ of length $k$ in
$\PP_l(g)$ is bounded above by $P(k)$.
\end{corollary}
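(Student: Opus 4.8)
The plan is to bound, uniformly over $g\in G$, the number of left divisors $h$ of $g$ with $|h|=k$ that lie in $\PP_l(g)=\PP^1_l(g)\cup\PP^2_l(g)$, treating the two types of permissible factorisation separately and then adding the two bounds. I would first dispose of the unsigned case: if $g$ is unsigned then by the Mairesse--Matheus criterion $g$ has a unique geodesic word $w$, and every geodesic factorisation $g_1g_2=g$ corresponds to a way of splitting this word; the number of left divisors of length exactly $k$ is then at most the number of positions in $w$ at which a split of total length $k$ can occur, which (because of the non-uniqueness coming from $\tau$-moves on critical subwords) I expect to be bounded by a fixed polynomial in $k$ — indeed the only ambiguity in ``$g_1$ has length $k$'' comes from reshuffling alternating syllables near the cut, and that is controlled by $m$. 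The genuinely delicate case is when $g$ is signed, say positive (the negative case being symmetric).

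For signed $g$, say $g$ positive with $d:=d(g)$, I would split along the two pieces of the definition of $\PP$. For divisors $h\in\PP^1_l(g)$, by definition either $h$ or $h^{-1}g$ has a two-syllable geodesic representative. If $h$ itself is a product $a^sb^t$ of two syllables of total length $k$, then $h$ is determined by the unordered pair $\{a,b\}$ (at most a bounded number of choices since $X$ has two elements, so $\{a,b\}$ is essentially fixed) and by $s$ with $s+t=k$, giving at most $O(k)$ possibilities. If instead $h^{-1}g=a^sb^t$ is the two-syllable piece, then $h=g(a^sb^t)^{-1}=g\,b^{-t}a^{-s}$ is a right-divisor complement, and I would invoke Lemma~\ref{p1count}: the number of right divisors of $g$ of the form $a^sb^t$ with $s+t=l$ is at most $d(g)+1$, and since $d(g)\le |g|_G$ and such an $h$ has $|h|=k$ forcing $|h^{-1}g|=l$ with $l$ in a bounded relationship to $k$ — actually here one must be a little careful, as $l$ is not a priori bounded by $k$; I would use instead that $d(g)$ itself is bounded because $\Delta^{d}$ being a left-divisor-free-complement of the length-$k$ divisor $h$ forces $d\le$ something like $k$ (more precisely, one of the two elements $ga^{-d},gb^{-d}$ fails to be $\Delta$-divisible, which as in the proof of Lemma~\ref{p1count} bounds the relevant power), so the count is $O(k)$.

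For divisors $h\in\PP^2_l(g)$, i.e. geodesic factorisations $g=g_1g_2$ with $d(g_1)+d(g_2)=d(g)=d$, the key structural input is Lemma~\ref{signed_lem}: once we fix $s:=d(g_1)\in\{0,1,\dots,d\}$, the divisor $g_1$ has $\Delta^s$ as a prefix and the remaining part $g_1'$ with $g_1=\Delta^s g_1'$ satisfies $d(g_1')=0$, hence has a unique geodesic representative; combined with the fact that $g_2$ has $\Delta^{d-s}$ as suffix and Lemma~\ref{signed_lem} applied appropriately, the factorisation $g_1g_2$ is essentially rigid once $s$ and the length $k$ are chosen. So the number of such $h$ of length $k$ is at most $d+1$ times a bounded ambiguity, i.e. $O(d)$; and again $d=d(g)$ must be bounded in terms of $k$, which I would argue by noting that $\Delta^s$ is a prefix of $h$ so $sm\le k$ hence $s\le k/m$, while the other piece's contribution $d-s$ is constrained because a length-$k$ divisor cannot ``use up'' more than $k$ of the $\Delta$-power, giving $d\le$ roughly $2k/m+$ const. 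Summing the $O(k)$ bounds from $\PP^1$ and the $O(k)$ bound from $\PP^2$ yields a single polynomial $P(x)$ (in fact linear, up to constants depending only on $m$) as required.

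The main obstacle I anticipate is \emph{not} the counting per se but pinning down why $d(g)$ is forced to be $O(k)$ whenever $g$ admits a \emph{permissible} left divisor of length $k$: the worry is a very long positive $g$ with huge $d(g)$ whose short left divisors are all honest but whose $\Delta$-structure inflates the number of right-divisor complements beyond a polynomial in $k$. The resolution, I believe, is exactly the reason $\PP$ was defined restrictively: a $\Delta$-decreasing factorisation is excluded, so a permissible $g_1g_2$ with $|g_1|=k$ small has $d(g_1)+d(g_2)=d(g)$, and since $\Delta^{d(g_1)}$ is a prefix of the length-$k$ word $g_1$ we get $d(g_1)\le k/m$, while $d(g_2)$ can be large — but then $g_1\in\PP^1$ (it cannot be that $g_1$ has a big $\Delta$ structure without having length $\ge m\cdot d(g_1)$), and the two-syllable branch of $\PP^1$ together with Lemma~\ref{p1count} keeps the count polynomial. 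Making this dichotomy airtight — i.e. verifying that every permissible length-$k$ left divisor falls into a case where the relevant ``size'' parameter is $O(k)$ — is the step I would spend the most care on.
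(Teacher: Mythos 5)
Your signed-case skeleton matches the paper's (split into $\PP^1$ and $\PP^2$, Lemma~\ref{p1count} for two-syllable right factors, rigidity via Lemma~\ref{signed_lem} indexed by $s=d(g_1)$ for $\PP^2$), but there are two genuine gaps. First, the unsigned case: your opening claim that an unsigned element has a unique geodesic word is false --- uniqueness holds only when $p(w)+n(w)<m$, and unsigned elements with $p(w)+n(w)=m$ (e.g.\ $ab^2a^{-1}=_G b^{-1}a^2b$ when $m=3$) have several geodesic representatives related by $\tau$-moves. These moves are not a local reshuffle ``near the cut'': a $\tau$-move flips the signs of two maximal alternating subwords that can lie far apart (one inside the length-$k$ prefix, one outside), exchanging their lengths $p\leftrightarrow n$ and thereby shifting every later letter, so the group element represented by the length-$k$ prefix genuinely varies with the representative, and the number of possibilities grows with $k$ rather than being ``controlled by $m$''. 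The paper handles exactly this with Lemma~\ref{unsigned_lem}: the prefix element is determined by the pair $(r_p,r_n)$ of positive and negative maximal alternating subwords it contains, giving the $O(k^2)$ bound (times a bounded correction when the cut falls inside an alternating block). Nothing in your proposal substitutes for this.

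Second, in the $\PP^1$ branch where the two-syllable piece is the right factor $a^sb^t$, Lemma~\ref{p1count} bounds the count by $2(d(g)+1)$, so you need $d(g)=O(k)$; your justification does not deliver this. The step you cite from the proof of Lemma~\ref{p1count} (one of $ga^{-d}$, $gb^{-d}$ is not $\Delta$-divisible) yields $s+t\le d(g)$, i.e.\ it bounds the length of the two-syllable divisor by $d(g)$ --- the wrong direction --- and your closing ``dichotomy'' only bounds $d(g_1)\le k/m$, which is beside the point because the problematic parameter is $d(g)$ itself; moreover your interim claim that $\PP^2$ factorisations force $d(g)$ to be roughly $2k/m$ is false, since $d(g_2)$ can be arbitrarily large (this does no harm in $\PP^2$, where the count is over $s\in[0,k]$ and Lemma~\ref{signed_lem} gives rigidity, but it cannot rescue the $\PP^1$ branch). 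The missing observation, which is how the paper closes this case, is that if $g=u\cdot a^sb^t$ is a geodesic factorisation with $|u|=k$, then on collecting the powers of $\Delta$ dividing $g$ to the left, each occurrence of $\Delta$ must take at least $m-2$ of its letters from $u$, whence $k\ge(m-2)d(g)\ge d(g)$; only with this does Lemma~\ref{p1count} give an $O(k)$ count for that branch.
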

\begin{proof} 
Suppose that $g \in G$ has length $k+l$.
We need to bound the number of group elements $u_G$ for which $w=uv$ is a
geodesic word representing $g$, with $|u|=k,|v|=l$ and $(u_G,v_G) \in \PP$.

First suppose that $w=uv$ is unsigned, of the form
$\eta_0w_1\eta_1\cdots w_s\eta_s$, as above. If one of
the subwords $w_i$ of $w$ intersects both $u$ and $v$, then
let $u_1$ be the prefix of $w$ that ends at the end of $w_i$; otherwise
let $u_1= u$.

Suppose that $u_1$ contains $r_p$ positive subwords $w_i$ and $r_n$
negative subwords $w_i$. The group element $(u_1)_G$ is determined
by those two parameters by Lemma~\ref{unsigned_lem}.
Since each of $r_p,r_n$ must be in the range $[0,k]$, there are at most
$k^2$ choices for $(u_1)_G$ corresponding to unsigned geodesics $w$.
Since $u$ differs from $u_1$ by an alternating
subword of length less than $m$ (and there are less than $4m$ such words),
there are at most $4mk^2$ choices for $u_G$ that
correspond to unsigned geodesic representatives $w$.

Now suppose that $w=uv$ is signed. Suppose first that $(u,v) \in \PP^1$,
so that either $u$ or $v$ has at most 2 syllables.
There are at most $2(k+1)$ possible words $u$ with at most two
syllables. By Lemma~\ref{p1count}, there are at most $2(d(g)+1)$
possible words $v$ with at most two syllables.
If the signed element $g$ has the geodesic factorisation $g=g'a^sb^t$
then, by collecting powers of $\Delta$ that divide $g$ to the left, we
see that at least $m-2$ of the letters in each occurrence of $\Delta$
in $g$ must come from $g'$, and hence $|g'| \ge (m-2)d(g) \ge d(g)$.
So, if there are factorisations in which $v$ has two syllables, then
$k \ge d(g)$.  So there are at most $4(k+1)$ such pairs $(u,v)$.

Now assume that $(u,v) \in \PP^2$.  Then $u=_G \Delta^su'$ and
$v=_G v'\Delta^{d(g)-s}$ for some $s$, where $u'v'$ is not divisible by
$\delta$, and by Lemma~\ref{signed_lem} $u'v'$ is completely determined
by $g,s$. Further $u'v'$ is the unique geodesic representative of $(u'v')_G$.
Hence (given $g,k$), $(u_G)$ is completely determined by $s \in [0,k]$.  
The number of such factorisations is therefore at most $k+1$.
\end{proof}

In order to verify the condition D2, we first describe a process that
we call {\em merging}, which we can use to define the set $S(g,k,l)$ that
appears in that condition.

Given elements $g_1,g_2$ of length $k,l$ whose product $g$ has length
less than $k+l$, an application of the  merging process
results in a triple $(f_1,\Delta^r,f_2)$ of elements, such that
for some $h_1,h_2$ with $h_1h_2=_G \Delta^r$, $f_1h_1$ and $h_2f_2$ are
geodesic factorisations of $g_1,g_2$, respectively, and furthermore
$(f_1,h_1),(h_2,f_2) \in \PP$.
We call such a triple a {\em merger}
of $g_1$ and $g_2$; we do not claim or need $g_1,g_2$ to
have a unique merger (although we suspect that it does).
Then we define the set $S(g,k,l)$ to be the set of all triples
$(f_1,\Delta^r,f_2)$ that arise as mergers of pairs of elements $g_1,g_2$
of lengths $k,l$ and with $g_1g_2=_G g$.

We compute a merger of $(g_1,g_2)$ as the last term of a sequence
of triples $(g_1^{(t)},\Delta^{r_t},g_2^{(t)})$, defined as follows.
When one or both of $g_1,g_2$ is a signed word, we have to be careful to ensure
that the resulting geodesic factorisations of $g_1$ and $g_2$ lie in $\PP$.
We set $g_1^{(1)}:=g_1$, $r_1:=0$, $g_2^{(1)}:=g_2$.

Now, the $t$-th step of the merging process computes
$g_1^{(t+1)},r_{t+1},g_2^{(t+1)}$ as follows.
In each of the three situations below, we choose non-identity group elements
$h,h'$, and then define $g_1^{(t+1)} :=  g_1^{(t)}h^{-1}$ and
$g_2^{(t+1)} := h'^{-1}g_2^{(t)}$ such that $g_1^{(t+1)}h$ and
$h' g_2^{(t+1)}$ are geodesic factorisations of $g_1^{(t)}$ and $g_2^{(t)}$
respectively, with $g_1^{(t+1)} \in \PP_l(g_1)$ and $g_2^{(t+1)} \in \PP_r(g_2)$.

\begin{mylist}
\item[(i)] If we can choose $h,h'$ with $h \delta^{r_t}(h') = 1$,
then we do so with $h,h'$ as long as possible, and put $r_{t+1}:= r_t$
(we call this a cancellation move).
\item[(ii)] Otherwise, if $g_1^{(t)}$ and $g_2^{(t)}$ are both signed
words, and we can choose $h=h' = \Delta^\epsilon$ with $\epsilon = \pm 1$,
then do so, and put $r_{t+1} := r_t + 2\epsilon$.
\item[(iii)] Otherwise, if we can choose $h,h'$ with
$h \delta^{r_t}(h') = \Delta^{\epsilon}$ with $\epsilon = \pm 1$, then do so
and put $r_{t+1}:= r_t+\epsilon$ (we call this a $\Delta$-extraction move).
\end{mylist}

If for some $t$, no pair of elements $h,h'$ satisfy any of these three conditions, then
merging is complete, and we output $(f_1,\Delta^r,f_2):=(g_1^{(t)},\Delta^{r_t},g_2^{(t)})$ as a merger of $(g_1,g_2)$.
Note that we needed the 
conditions at each previous step
that $g_1^{(t+1)} \in \PP_l(g_1)$ and
$g_2^{(t+1)} \in \PP_r(g_2)$ to ensure that
$(f_1,\Delta^r,f_2)$ possesses all the required properties of
a merger of $(g_1,g_2)$.

\begin{lemma}\label{mergerlemma}
Suppose that $g_1,g_2$ are elements of length $k,l$ respectively,
where $k \leq l$, and that $(f_1,\Delta^r,f_2)$ is a merger of $(g_1,g_2)$;
Let $h_1:= f_1^{-1}g_1$ and $h_2 := g_2 f_2^{-1}$.
Then $r \leq k$ and $|h_1|,|h_2| \leq (m-1)k$.
\end{lemma}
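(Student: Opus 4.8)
The plan is to analyze the merging process step by step, tracking how each of the three move types changes the quantities $r_t$, $|g_1^{(t)}|$, $|g_2^{(t)}|$, and the lengths of the pieces $h^{(t)}$ extracted from $g_1^{(t)}$ and $h'^{(t)}$ extracted from $g_2^{(t)}$. The key observation is that each move strictly decreases $|g_1^{(t)}|$ (the length of the left factor), which bounds the total number of steps by $k$, and then the contributions to $r$ and to the accumulated tails $h_1, h_2$ per step must be shown to be appropriately controlled.

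First I would set up the bookkeeping. Write $g_1^{(t)} = f_1 u_1^{(t)}$ and $g_2^{(t)} = u_2^{(t)} f_2$ as the process proceeds — more precisely, at step $t$ we peel off $h^{(t)}$ on the right of $g_1^{(t)}$ and $h'^{(t)}$ on the left of $g_2^{(t)}$, so that $h_1 = h^{(1)} h^{(2)} \cdots h^{(T)}$ (reading the cumulative right divisor of $g_1$) and symmetrically $h_2 = h'^{(T)} \cdots h'^{(1)}$, where $T$ is the number of merging steps. The crucial claim is that $\ell(g_1^{(t+1)}) < \ell(g_1^{(t)})$ at every step: in a cancellation move (i) we remove a nonempty $h$, so $|g_1|$ drops by $|h| \ge 1$; in move (ii) we remove $\Delta^{\pm 1}$ of length $m$; in a $\Delta$-extraction move (iii) we remove a nonempty $h$. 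Since $g_1^{(1)} = g_1$ has length $k$ and lengths are nonnegative integers, $T \le k$, and moreover $\sum_t |h^{(t)}| \le k$, so $|h_1| \le k$. Wait — that last inequality as stated gives $|h_1| \le k$, which is stronger than the claimed $(m-1)k$; the factor $(m-1)$ must be there to accommodate the fact that the claimed geodesic factorisations $f_1 h^{(t)}$ need not be concatenation-geodesic as written, i.e. $|h_1|$ is the geodesic length of the group element $f_1^{-1} g_1$, and one should compare it against $\ell(g_1^{(t)}) - \ell(g_1^{(t+1)})$ being at least $|h^{(t)}|/(m-1)$ rather than $|h^{(t)}|$ — because a single $\Delta^{\pm 1}$ has word length $m$ but, when appended, may only shorten a geodesic by $m - (m-2)$ or similar. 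So I would prove: at each step, $\ell(g_1^{(t)}) - \ell(g_1^{(t+1)}) \ge \ell(h^{(t)})/(m-1)$, summing to $|h_1| \le (m-1)(\ell(g_1) - \ell(f_1)) \le (m-1)k$.

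For the bound $r \le k$: $r = r_T$, and $r_t$ only changes in moves (ii) and (iii), increasing by $2\epsilon$ or $\epsilon$ respectively with $\epsilon = \pm 1$. A move of type (ii) costs a full $\Delta$ (length $m$) off $g_1^{(t)}$ and changes $r$ by $\pm 2$; a move of type (iii) extracts a $\Delta^{\pm 1}$ jointly from the two sides and changes $r$ by $\pm 1$. In all cases the change $|r_{t+1} - r_t|$ is at most, say, $2$, and the corresponding drop in $\ell(g_1^{(t)})$ is at least — I expect — $2$ as well (a type-(ii) move drops $\ell(g_1)$ by $m \ge 3 \ge 2$; a type-(iii) move drops it by at least $1$, and one must check that two consecutive type-(iii) $\Delta$-extractions on the same side together drop $\ell(g_1)$ by at least $m - (m-2) = 2$, or more carefully use that $\Delta^2$ divides appropriately). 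Summing, $r = r_T \le \sum |r_{t+1}-r_t| \le \ell(g_1) - \ell(f_1) \le k$. The symmetric statement for $|h_2| \le (m-1)l$ would follow identically from tracking $g_2^{(t)}$, but since we also want $|h_2| \le (m-1)k$ under the hypothesis $k \le l$, I would instead observe that $h_2 =_G (\Delta^r)^{-1} h_1^{-1} \cdot (\text{trivial})$ — more precisely $h_1 h_2 =_G \Delta^r$ by the merger property, so $\ell(h_2) \le \ell(\Delta^r) + \ell(h_1) = mr + |h_1| \le mk + (m-1)k$; this gives a bound of the right shape (linear in $k$) though with a worse constant, and one then absorbs the constant or sharpens the per-step analysis to get exactly $(m-1)k$.

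The main obstacle I anticipate is the per-step length accounting in the presence of Garside elements: appending $\Delta^{\pm 1}$ to a geodesic word does not change its length by exactly $m$, because $\Delta$ may partially cancel or partially absorb, and the interplay between the "as long as possible" cancellation in move (i) and the subsequent $\Delta$-extraction moves must be handled so that no step is "wasteful" — i.e. every unit of $r$ accumulated and every letter of $h_1, h_2$ accumulated is paid for by a proportional decrease in $\ell(g_1^{(t)})$. I would lean on Lemma~\ref{geodlem} and the structure of geodesics in $\DA{m}$ (the fact, used in the proof of Lemma~\ref{p1count}, that $\Delta^t a^{-t}$ has geodesic length a concatenation of alternating words of length $m-1$, hence roughly $(m-1)t$) to get the factor $m-1$ exactly, and on the requirement built into the merging definition that each $g_1^{(t+1)} \in \PP_l(g_1)$, which prevents the process from oscillating.
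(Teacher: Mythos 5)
Your accounting for $r \leq k$ is fine and is essentially the paper's (each step that changes $r$ by $\pm 1$ or $\pm 2$ strips at least that many letters from $g_1^{(t)}$, and at most $k$ letters can be stripped in total). Also, your mid-proof worry about $|h_1|$ is misplaced: by the definition of a merging step, $g_1^{(t+1)}h$ is \emph{required} to be a geodesic factorisation of $g_1^{(t)}$, so the lengths telescope exactly, $|f_1|+\sum_t |h^{(t)}| = k$, and hence $|h_1| \leq k$ with no factor $(m-1)$ needed. The factor $(m-1)$ in the statement is there to handle $h_2$, not $h_1$ -- and that is where your proposal has a genuine gap. Your route via $h_2 = h_1^{-1}\Delta^r$ gives only $|h_2| \leq |h_1| + m r \leq (m+1)k$, which you concede is weaker than the claimed $(m-1)k$; ``absorbing the constant'' does not prove the lemma as stated (it fixes $K=m-1$ and is quoted as such immediately afterwards), and ``sharpening the per-step analysis'' is precisely the content you have not supplied. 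Indeed, the three facts you use ($|h_1|\leq k$, $r\leq k$, $h_1h_2=\Delta^r$) cannot yield the constant $m-1$: for example $h_1 = x_1^{-k}$, $h_2 = x_1^k\Delta^k$ satisfies all of them, and $x_1^k\Delta^k$ is a positive, hence geodesic, word, so $|h_2| = (m+1)k$. What rules this out is the structure of the merging process itself, which your argument does not exploit beyond the bound on $r$.

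The paper's proof supplies exactly the missing per-step comparison: at every stage the pieces $h,h'$ stripped from the two sides have comparable lengths -- equal in a cancellation move (i) (since $h' = \delta^{-r_t}(h^{-1})$) and in a move of type (ii) (both are $\Delta^{\pm 1}$), while in a $\Delta$-extraction move (iii) both are nontrivial with $h\,\delta^{r_t}(h') = \Delta^{\pm 1}$, so the ratio $|h|/|h'|$ lies in $[1/(m-1),\,m-1]$. Summing over all steps gives $1/(m-1) \leq |h_1|/|h_2| \leq m-1$, and combined with $|h_1| \leq |g_1| = k$ this yields $|h_2| \leq (m-1)|h_1| \leq (m-1)k$. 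So to complete your argument you need to prove and use this stepwise length comparability (or some equivalent control relating what is removed from $g_2^{(t)}$ to what is removed from $g_1^{(t)}$); without it the proposal establishes only a weaker bound than the lemma asserts.
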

\begin{proof} 
It is clear that the process above completes in at most $k$
stages, giving the required bound on $r$.
At each stage of the merging process, the ratio of the lengths of the
elements $h,h'$, stripped  from $g_1^{(t)}$ and $g_2^{(t)}$ to
give $g_1^{(t+1)}$ and $g_2^{(t+1)}$, is in the
interval $[1/(m-1),m-1]$.
Hence the same is true for the total lengths of elements stripped off,
that is, $1/(m-1) \le |h_1|/|h_2| \le m-1$.
Since $|h_1| \le |g_1| = k$ and $h_2 \le |g_2| = l$, we get
$|h_1|,|h_2| \le (m-1)k$, as claimed.
\end{proof}

The definition of the merging process ensures that, for a merger
$(f_1,\Delta^r,f_2)$ of $(g_1,g_2)$, we have $(f_1,h_1),
(h_2,f_2) \in \PP$.
So, in order to verify that D2 holds, we just need to find polynomial bounds
on $S(g,k,l)$ and the associated set $T(k,l)$ of powers of $\Delta$, and a
value for the constant $K$.

The above lemma already bounds $|T(k,l)|$ by $2k+1$ and $K$ by $m-1$,
so it remains to find a polynomial bound on $S(g,k,l)$.
In order to do this we need to explore a reduction process, which we call
{\em compression}, that starts
with a triple $(f_1,\Delta^r,f_2) \in S(g,k,l)$ and produces a geodesic representative of $g$.
Reversing the compression process will then
enable us to estimate the size of $S(g,k,l)$.

So suppose that $(f_1,\Delta^r,f_2) \in S(g,k,l)$.
The fact that the merging process has completed at
$(f_1,\Delta^r,f_2)$ ensures
that $f_1,f_2$ cannot both have powers of $\Delta$ as divisors
and that, if one of the two elements has such a divisor $\Delta^{r_0}$
and $r,r_0\neq 0$, then $r,r_0$ cannot have opposite signs.
Hence we see that $f_1,f_2$ have geodesic representatives of the form
$u\Delta^{r_0},v$ or $u,\Delta^{r_0}v$ for some $r_0$ (possibly zero),
where $u,v$ are not divisible by $\Delta^{\pm 1}$.
Then $g_1\Delta^r g_2$ is represented by
$u\Delta^{r_1}v=_G u \delta^{r_1}(v)\Delta^{r_1}$, where $r_1=r+r_0$ has
the same sign as $r$, and (since the merging process terminated),
$u\delta^{r_1}(v)$ is freely reduced and also contains no powers of $\Delta$.
Then, by Lemma~\ref{deltafree}, if non-geodesic, $u\delta^{r_1}(v)$ 
can be reduced to a
geodesic word using only unsigned length reducing $\tau$-moves.

We describe that reduction more precisely as follows.
We define $u_1$ to be the shortest
prefix of $u\delta^{r_1}(v)$ that contains $u$ and ends at the end of a maximal
alternating subword, and define $v_1$ to be the remainder of the word.
Then $v_1$ is geodesic, and we set $v_2:= v_1$. If $u_1$ is also geodesic, then
we set $u_2:= u_1$, but otherwise
we set $u_2$ to be a geodesic word 
derived from $u_1$ by a single length reducing $\tau$-move.
We set $u_2^{(1)}:= u_2,v_2^{(1)}:= v_2$.

We can now reduce $u_2^{(1)}v_2^{(1)}$
to geodesic form through a series of words $u_2^{(t)}v_2^{(t)}$ using a sequence
of length reducing $\tau$-moves, the $i$-th of which
involves one alternating subword within $u_2^{(t)}$ and one within $v_2^{(t)}$.
Specifically, we derive $u_2^{(t+1)},v_2^{(t+1)}$ from $u_2^{(t)},v_2^{(t)}$
by either replacing a suffix 
${}_p(x,y)\,\xi_1$ in $u_2^{(t)}$ and a prefix $\xi_2\,(z^{-1},t^{-1})_n$
in $v_2^{(t)}$ by a suffix
${}_{m-p}(y^{-1},x^{-1})\,\delta(\xi_1)$ in $u_2^{(t+1)}$
and a prefix $\delta(\xi_2)\,(t,z)_{m-n}$ in $v_2^{(t+1)}$ or
by replacing a suffix
${}_n(x^{-1},y^{-1})\,\xi_1$ in $u_2^{(t)}$ and a prefix $\xi_2\,(z,t)_p$
in $v_2^{(t)}$ by a suffix 
${}_{m-n}(y,x)\,\delta(\xi_1 )$ in $u_2^{(t+1)}$
and a prefix $\delta(\xi_2)\,(t^{-1},z^{-1})_{m-p}$ in $v_2^{(t+1)}$.
Eventually this process produces a geodesic word $u_3v_3$.

Next we need to consider
the reduction to geodesic form of $u_3v_3\Delta^{r_1}$.
We do this in two stages. First we reduce $v_3\Delta^{r_1}$ to
$v_4\Delta^{r'}$, using a sequence of length reducing $\tau$-moves, each of which involves
a single $\Delta^{\pm 1}$ 
and an alternating subword of the opposite sign within $v_3$ (or the word
derived from it); at the end of 
this stage, either $v_4\Delta^{r'}$ is signed or $r'=0$. 

Now if $r'=0$ we set $u_4:=u_3$, but otherwise, we apply a further
sequence of length reducing $\tau$-moves to $u_3\Delta^{r'}$, each move
involving a single $\Delta^{\pm 1}$ and an alternating subword of the 
opposite sign in the word derived from $u_3'$, and hence reduce
$u_3\Delta^{r'}$ to $u_4\Delta^s$.  The fact that
$u_4\delta^{r'-s}(v_4)\Delta^s$ is now a geodesic representative
of $u_3v_3\Delta^{r_1}$ follows from the two equations
$u_3v_4\Delta^{r'}=_G u_3\Delta^{r'}\delta^{r'}(v_4)$
and $u_4\delta^{r'-s}(v_4)\Delta^s =_G u_4\Delta^s\delta^{r'}(v_4)$.
Our construction ensures that $u_4\delta^{r'-s}(v_4)$ has no divisor of the
form $\Delta^{\pm 1}$.
This is the end of compression;
we use the name $\kappa(f_1,\Delta^r,f_2)$ for the group element $(u_4)_G$. 

We now want to estimate both the number of group elements that can
arise as $\kappa(f_1,\Delta^r,f_2)$ out of the compression of at least 
one element $(f_1,\Delta^r,f_2)$ of $S(g,k,l)$,
and also the number of triples $(f_1,\Delta^r,f_2) \in S(g,k,l)$ for which
$\kappa(f_1,\Delta^r,f_2)=g'$ for 
a particular group element $g'$.
The product of these two values will
give a bound on $|S(g,k,l)|$.

\begin{lemma}
\label{bound_kappaS}
The size of $\kappa(S(g,k,l))$ is bounded by a polynomial in $k$.
\end{lemma}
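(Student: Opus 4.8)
The goal is to bound $|\kappa(S(g,k,l))|$ polynomially in $k$, where $\kappa(f_1,\Delta^r,f_2)=(u_4)_G$ is the group element produced at the end of the compression process. The plan is to trace through compression and observe that at each stage of the construction, the element $u_4$ is determined by $g$ together with a small number of bounded integer parameters, each lying in a range of size $O(k)$.

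First I would recall that any triple in $S(g,k,l)$ arises as a merger of some pair $(g_1,g_2)$ with $g_1g_2=_G g$, $|g_1|=k$, $|g_2|=l$, and that Lemma~\ref{mergerlemma} gives $r\le k$ and $|h_1|,|h_2|\le(m-1)k$, so $|f_1|=|g_1|-|h_1|\ge k-(m-1)k$ — more importantly, $|f_1|\le k$ and $|f_2|\le l$, while the word $u$ (the $\Delta$-free part of $f_1$) and $v$ (the $\Delta$-free part of $f_2$) satisfy $|u\delta^{r_1}(v)|\le|u|+|v|\le|f_1|+|f_2|$, which is not itself bounded in terms of $k$. The key point, then, is that the compression process reconstructs $u_4$ from $g$ by running it "in reverse": the geodesic word $u_4\delta^{r'-s}(v_4)\Delta^s$ represents $g_1\Delta^r g_2=_G g\cdot(\text{something})$; in fact $g_1\Delta^rg_2$ differs from $g$ only by the $\Delta$-powers and cancellations stripped off during merging, so $u_4\delta^{r'-s}(v_4)\Delta^s=_G g\,w$ for a word $w$ that is a product of at most $k$ elementary pieces (cancellation moves contribute cancelling pairs, $\Delta$-extractions contribute $\Delta^{\pm1}$). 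Consequently $u_4\delta^{r'-s}(v_4)$ is a geodesic representative of $g\Delta^{-s}$ (or $g\Delta^{-s}$ times a bounded correction), and since $g\Delta^{-s}$ has $d(g\Delta^{-s})\le d(g)\le$ (something controlled), the number of its geodesic representatives is polynomially bounded by Corollary~\ref{geodcor}. The splitting point between $u_4$ and $\delta^{r'-s}(v_4)$ within that geodesic word is then one further integer parameter of size $O(k)$ (since $|h_1|,|h_2|\le(m-1)k$ control how far the merger reached in from each end).

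More concretely, I would argue in the following order. (1) Show $u_4\delta^{r'-s}(v_4)$, as a word, is a geodesic representative of an element lying in a set of size $O(1)$ times (geodesic representatives of $g$-translates by $\Delta^{\pm i}$, $i\le k$); combine this with Corollary~\ref{geodcor} to bound the number of such words by $P(k)\cdot(2k+1)$ for a suitable polynomial $P$. (2) Observe that $u_4$ is the prefix of such a word ending at a position that is within $(m-1)k+m$ of the right end (because $v_4$ is obtained from $v_3$, hence ultimately from $v$, whose length is bounded by $|h_2|+|h_1|\le 2(m-1)k$ after the compression moves, which do not increase total length), so there are at most $2(m-1)k+m+1$ choices for where $u_4$ ends. (3) Multiply: $|\kappa(S(g,k,l))|\le P(k)(2k+1)(2(m-1)k+m+1)$, which is polynomial in $k$.

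The main obstacle I anticipate is step (1): controlling precisely which group element $u_4\delta^{r'-s}(v_4)\Delta^s$ represents. One must check carefully that the discrepancy between $g$ and $g_1\Delta^rg_2$ — accumulated over up to $k$ merging steps of types (i), (ii), (iii) — is captured by a single power $\Delta^j$ with $|j|\le k$ (the cancellation moves cancel internally and the $\Delta$-moves contribute a net power bounded by the number of steps), rather than by some longer uncontrolled word. Once that bookkeeping is done, the length-reducing $\tau$-moves used in compression never increase the total word length (by Lemma~\ref{deltafree} and the explicit form of the moves), so all the length estimates on $u,v,u_i,v_i$ go through, and the count follows by assembling the bounded parameters.
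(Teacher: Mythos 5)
Your strategy---count the geodesic words $w'=u_4\delta^{r'-s}(v_4)$, then count the possible split positions---breaks down at its first step, and this is a genuine gap rather than a presentational issue. Corollary~\ref{geodcor} bounds the number of \emph{group elements} of a given length that occur as left divisors of $g$ lying in $\PP_l(g)$; it says nothing about the number of geodesic \emph{words} representing a fixed element. That latter quantity is in general exponential in the length (already $\Delta^n$ has at least $2^n$ geodesic representatives, obtained by writing each factor $\Delta$ as ${}_m(x_1,x_2)$ or ${}_m(x_2,x_1)$), and even a hypothetical bound would be a function of $|g|$, which is of order $k+l$, whereas the lemma needs a polynomial in $k=\min(k,l)$ alone; $l$ is not controlled. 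So the factor ``$P(k)(2k+1)$'' in your step (1) has no justification. The bookkeeping about a ``correction'' is also unnecessary: since $h_1h_2=_G\Delta^r$ we have $f_1\Delta^r f_2=_G f_1h_1h_2f_2=g_1g_2=g$, so the compressed word $u_4\delta^{r'-s}(v_4)\Delta^s$ is a geodesic representative of $g$ itself (read ``$g_1\Delta^r g_2$'' in the description of compression as $f_1\Delta^r f_2$); no translates of $g$ by powers of $\Delta$ enter.

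Your step (2) is also inverted. The long side of $w'$ is $v_4$, not $u_4$: $v$ is the $\Delta$-free part of $f_2$ and $|f_2|$ can be close to $l$, so $v$ is \emph{not} bounded by $|h_1|+|h_2|\le 2(m-1)k$ (you have confused $f_2$ with the stripped-off piece $h_2$). What is short is the $u$-side, since $|u|\le|f_1|\le k$, and the compression moves, while never lengthening the whole word, can lengthen the $u$-side only by a bounded amount at the first stage and by a factor of at most $m-1$ at each of the two later stages, giving $|u_4|\le(m-1)^2(k+m-1)$. This length bound is the missing ingredient, and with it the correct count goes directly through group elements, which is all that $\kappa(S(g,k,l))$ consists of: $u_4$ is a left divisor of $g$ of length $k'\le(m-1)^2(k+m-1)$, it lies in $\PP_l(g)$ because $w'\Delta^s$ is a geodesic for $g$ with no power of $\Delta$ dividing $w'$, and Corollary~\ref{geodcor} applied to $g$ bounds the number of such elements of each length $k'$ by $P_1(k')$; summing over the polynomially many values of $k'$ gives the polynomial bound in $k$. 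Since your proposal neither proves the bound on $|u_4|$ nor has a valid substitute for the word count, the argument as written does not close.
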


\begin{proof}
Suppose that compression of $(f_1,\Delta^r,f_2)$ leads to
$u_4\delta^{r'-s}(v_4)\Delta^s$.
We first need to relate $|u_4|$ to $k=|g_1|$.
We recall that $|u|\leq |f_1|\leq k$, $|u_1| \leq |u|+m-1$,
and $|u_2| \leq |u_1|$.
Then each move from $u_2^{(t)}$ to $u_2^{(t+1)}$ replaces a distinct alternating
subword by an alternating subword that is longer by a factor of at most $m-1$,
and doesn't alter the lengths of the subwords before or after that alternating
subword.  Hence $|u_3| \leq (m-1)|u_2|$. By the same argument
$|u_4| \leq (m-1)|u_3|$. So $|u_4| \leq (m-1)^2(k+m-1)$.

Then $u_4$ is a prefix of length $k' \leq (m-1)^2(k+m-1)$
of $w' := u_4\delta^{r'-s}(v_4)$, for which $w'\Delta^s$ is a geodesic
representing $g$ and
no power of $\Delta$ divides $w'$; it follows that $u_4 \in \PP_l(g)$.
By Corollary~\ref{geodcor} we know that, for some polynomial $P_1(x)$,
$P_1(k')$ bounds the number of elements of $\PP_l(g)$
represented by a word that is a prefix of length $k'$ of some such $w$.
Hence, since $k'$ is bounded by a polynomial
in $k$, the number of group elements represented by any such
$u_4$ with $k'$ in the appropriate range is bounded by a polynomial in $k$.
\end{proof}

\begin{lemma}
\label{bound_each_preimage}
The number of triples $(f_1,\Delta^r,f_2)$ in $S(g,k,l)$ for
which $\kappa(f_1,\Delta^r,f_2)$ is a particular element $g' \in G$
is bounded by a polynomial in $k$.
\end{lemma}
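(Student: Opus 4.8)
The plan is to reverse the compression process: starting from the element $g' = (u_4)_G$ and the data $g, k, l$, I want to show that only polynomially many triples $(f_1, \Delta^r, f_2) \in S(g,k,l)$ can compress to $g'$. The key observation is that compression is essentially deterministic once a small amount of auxiliary bookkeeping data is fixed --- the exponents $r, r', s, r_0, r_1$, the split points between the ``$u$''-part and the ``$v$''-part at each stage, and the choice of the initial $\tau$-move producing $u_2$ from $u_1$. So I would first argue that each of these auxiliary parameters ranges over a set of size bounded by a polynomial in $k$ (indeed most are bounded by constants or by $O(k)$, using Lemma~\ref{mergerlemma} to bound $r$ by $k$ and the length estimates in the proof of Lemma~\ref{bound_kappaS} to bound $|u_i|, |v_i|$ and hence $r', s$ by polynomials in $k$).

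Next I would show that, given $g'$ and a fixed choice of all the auxiliary parameters, the triple $(f_1, \Delta^r, f_2)$ is uniquely determined. This is the heart of the argument and where I expect the main obstacle to lie: compression proceeds through $u_4\delta^{r'-s}(v_4)\Delta^s \leadsto u_3 v_3 \Delta^{r_1} \leadsto u_2^{(1)} v_2^{(1)} \leadsto u_1 v_1 \leadsto u\delta^{r_1}(v) \Delta^{r_1} \leadsto u \Delta^{r_1} v \leadsto f_1 \Delta^r f_2$, and I must check each stage is invertible. The stages that peel off or reabsorb powers of $\Delta$ (going between $u_4\Delta^s$ and $u_3\Delta^{r'}$, etc.) are invertible because the length-reducing $\tau$-moves involved are forced: each such move consumes one $\Delta^{\pm1}$ against the unique maximal alternating subword of opposite sign at the appropriate end, and the terminal condition (no $\Delta^{\pm1}$ divisor) pins down when to stop. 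The unsigned reduction $u_2^{(1)}v_2^{(1)} \leadsto u_3 v_3$ needs more care, since a geodesic word has many $\tau$-equivalent representatives; here I would invoke Lemma~\ref{deltafree} together with the structural description via Lemmas~\ref{unsigned_lem} and~\ref{signed_lem} to argue that the word $u\delta^{r_1}(v)$ --- which is freely reduced with no $\Delta$-divisor --- is recovered from $u_3 v_3$ once the split point and the count of sign-changes are known, exactly as in the proof of Corollary~\ref{geodcor}.

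Finally, once $u\delta^{r_1}(v)\Delta^{r_1}$ is reconstructed, I recover $u\Delta^{r_1}v$ by applying $\delta^{r_1}$ to the suffix, then split off $v$ at the known position to get $u$ and $v$; the exponent $r_0$ then determines whether the $\Delta^{r_0}$ attaches to $f_1$ or $f_2$, giving $f_1, f_2$ and $r = r_1 - r_0$ uniquely. Multiplying the (polynomial) number of auxiliary-parameter choices by $1$ (the number of triples per choice) gives the desired polynomial bound; combined with Lemma~\ref{bound_kappaS}, this yields a polynomial bound on $|S(g,k,l)|$ and completes the verification of D2(a) for dihedral Artin groups. The main risk in this plan is subtle non-uniqueness in the unsigned reduction stage --- if the $\tau$-moves there are not sufficiently constrained, I may need to enlarge the auxiliary data (e.g. record more about the sequence of moves), but since each move strictly decreases length and there are at most $O(k)$ of them acting on $O(k)$-many alternating syllables, even recording the full move-sequence would only cost a polynomial factor.
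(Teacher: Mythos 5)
Your overall route---reversing the compression process and bounding the bookkeeping data---is the same as the paper's, and your treatment of the $\Delta$-absorbing stages (fixing $r'$, $s$, $r_0$, $r_1$ and the binary choice of which of $f_1,f_2$ carries $\Delta^{r_0}$, so that $(u_3)_G$ is determined by $(u_4)_G$ and $r'-s$) agrees with what the paper does. The genuine gap is at the unsigned reduction stage, from $u_2^{(1)}v_2^{(1)}$ to $u_3v_3$, exactly where you flag the risk. Your primary claim, that the triple is uniquely determined once $g'$ and the split points and exponents are fixed, is not justified: reversing the unsigned length reducing $\tau$-moves from a given $u_3v_3$ can produce many different elements $(u_2)_G$, so the count cannot be reduced to those few parameters alone. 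And your fallback---record the full move-sequence and claim this costs only a polynomial factor---is quantitatively wrong. There are up to about $k$ such moves (bounded not by the total length decrease, which only gives $O(k+l)$, but by the fact that each move consumes a distinct maximal alternating subword of $u_2$), and each move has one of up to $m^2$ types $(p,-n)$ or $(-n,p)$; so the number of possible move-sequences is of order $(m^2)^{k}$, exponential in $k$, not polynomial.

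What the paper supplies, and what your plan is missing, is the structural observation that $(u_2)_G$ depends only on $(u_3)_G$ together with the \emph{number} of reversed $\tau$-moves of each type, not on their order or the positions at which they are applied. Since there are at most $m^2$ types and at most $k$ moves in total, this gives at most $k^{m^2}$ possibilities for $(u_2)_G$ given $(u_3)_G$, polynomial in $k$ because $m$ is a constant of the group. Combined with the $O(k)$ ranges for $r'-s$, $r'$, $r$ and $r_0$, the fact that $(u_1)_G=(u_2)_G$ with at most $m$ choices of $u$ from $u_1$, and the preliminary reduction that the triple $(f_1,\Delta^r,f_2)$ is determined by $u_G$, $r$, $r_0$ and the binary choice, this yields the stated bound. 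Without an argument of this kind, collapsing the exponentially many possible move-sequences to polynomially many group-element outcomes, your plan does not establish the lemma.
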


\begin{proof}
We recall that for elements of $S(g,k,l)$, $f_1,f_2$ are
represented either by words
$u\Delta^{r_0},v$ or by words $u,\Delta^{r_0}v$, where $u,v$
have no divisors of the form $\Delta^{\pm 1}$.
The triple $(f_1,\Delta^r,f_2)$
is completely determined by the above binary choice together with $u_G,r$, and
$r_0$ in the first case.
But $r$ (and $r_0$ in the first case) is bounded above by $k$. Hence it is now
sufficient to bound the number of choices of $u_G$. Hence the proof of this
lemma is completed by application of the lemma that follows.
\end{proof}

\begin{lemma}
Given $g' \in G$, a polynomial in $k$ bounds
the number of group elements $u_G$ associated as above with triples
$(f_1,\Delta^r,f_2)$ in $S(g,k,l)$ for which $\kappa(f_1,\Delta^r,f_2)=_G g'$.
\end{lemma}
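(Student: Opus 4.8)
The plan is to reverse the compression process. Fix $g$ and $g'\in G$. As in the proof of Lemma~\ref{bound_each_preimage}, since the triple $(f_1,\Delta^r,f_2)$ is determined by $u_G$, the binary choice of whether the power of $\Delta$ lies in $f_1$ or in $f_2$, and the integers $r,r_0$ --- and by Lemma~\ref{mergerlemma} together with $|f_1|\le|g_1|=k$ these integers lie in intervals of length at most a fixed multiple of $k$ --- it suffices to bound the number of possible group elements $u_G$. Note also that $r_1=r+r_0$ lies in such an interval, and so do the residual exponents $r'$ and $s$ produced during compression: reduction of $v_3\Delta^{r_1}$ can only decrease the exponent in absolute value, so $|r'|\le|r_1|$, while $s$ is in fact determined by $g$ alone --- it is $d(g)$ when $g$ is signed and $0$ otherwise, because $u_4\delta^{r'-s}(v_4)$ is constructed with no divisor of the form $\Delta^{\pm1}$.

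First I would unwind the two $\Delta$-stages of the compression. From $u\delta^{r_1}(v)\Delta^{r_1}=_G g$ we see that $\gamma:=(u\delta^{r_1}(v))_G=g\Delta^{-r_1}$ is determined once $r_1$ is fixed, and that $u_3v_3$ is a geodesic word for $\gamma$ with $u_3$ a prefix; moreover $u_3\in\PP_l(\gamma)$, by the argument used for $u_4$ in the proof of Lemma~\ref{bound_kappaS}. Since $u_3\Delta^{r'}=_Gu_4\Delta^s$, we have $(u_3)_G=g'\Delta^{s-r'}$, so $(u_3)_G$ is determined by $g'$ and the exponents $r',s$. Hence, at the cost of multiplying by the (linearly) many choices of $r,r_0,r',s$ and by $2$, we are reduced to the following: for a fixed element $\gamma$ and a fixed left divisor $\hat u\in\PP_l(\gamma)$ with $|\hat u|$ at most a fixed multiple of $k$, bound the number of elements $u_G$ with $u$ geodesic, having no divisor of the form $\Delta^{\pm1}$, and with $|u|\le k$, for which the alternating-subword stage of the compression carries $u\delta^{r_1}(v)$ to a geodesic word $u_3v_3$ with $(u_3)_G=\hat u$.

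For this I would argue as in the proof of Corollary~\ref{geodcor}. Let $u_1$ be the extension of $u$ to the nearest maximal-alternating-subword boundary of $u\delta^{r_1}(v)$, so that $u$ is recovered from $u_1$ by deleting an alternating word of length $<m$ (at most $4m$ choices) and $|u_1|\le k+m-1$. By Lemma~\ref{deltafree} the reduction $u\delta^{r_1}(v)=u_1v_1\to u_2v_2\to\cdots\to u_3v_3$ uses only unsigned length-reducing $\tau$-moves, each of which keeps the word freely reduced and does not increase $p(\cdot)$ or $n(\cdot)$. Consequently these moves all act inside a single over-critical interface region, and the part of $u_1$ lying to the left of that region is carried, unchanged up to $\delta$, into $u_3$; so $u_3$ is obtained from $u_1$ by altering only the interface segment and moving the prefix boundary by at most $|u_1|\le k+m-1$ positions. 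Tracking the number $r_p$ of positive and the number $r_n$ of negative maximal alternating subwords through this reduction and invoking Lemma~\ref{unsigned_lem} exactly as in the proof of Corollary~\ref{geodcor}, one finds that $(u_1)_G$ is determined by $\hat u=(u_3)_G$ together with the pair $(r_p,r_n)$, which ranges over $\{0,\ldots,k+m-1\}^2$. This gives at most $4m(k+m)^2$ possibilities for $u_G$ in the reduced problem, and hence a polynomial bound in $k$ overall.

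The hard part will be making the third paragraph rigorous. Since $u\delta^{r_1}(v)$ is in general non-geodesic, neither Corollary~\ref{geodcor} nor Lemma~\ref{unsigned_lem} applies to it verbatim, and one must analyse directly how the length-reducing $\tau$-moves of the compression act on the interface between the $u$-part and the $v$-part: one needs that the prefix boundary at position $|u_1|$ migrates by only a controlled amount, and that, modulo this migration, the positive/negative maximal-alternating-subword counts form a genuine invariant linking $u_1$ to $u_3$. I expect this to require a careful case analysis of the over-critical subwords of $u\delta^{r_1}(v)$, of the kind already implicit in the description of compression, but no essentially new tools beyond Lemmas~\ref{geodlem} and~\ref{deltafree} and Corollary~\ref{geodcor}.
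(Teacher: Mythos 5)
Your overall plan---reverse the compression stage by stage and count the choices at each stage---is the same as the paper's, but the step you yourself flag as ``the hard part'' is precisely the content of the lemma, and it is left unproven; worse, the sketch you give for it rests on a false structural claim. The unsigned length-reducing $\tau$-moves taking $u_2v_2$ to $u_3v_3$ do \emph{not} act inside a single interface region: each move pairs a maximal alternating subword lying anywhere in the current $u$-part with one lying anywhere in the current $v$-part (the whole intervening segment playing the role of $\xi$ in the over-critical word), successive moves use distinct alternating subwords that may be scattered through all of $u_2$, and $\delta$ is applied to the segments between them --- indeed $|u_3|$ can be as large as $(m-1)|u_2|$ (see the proof of Lemma~\ref{bound_kappaS}), so the entire $u$-part can be rewritten. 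Nor does Lemma~\ref{unsigned_lem} help as invoked: it concerns two geodesic words related by ordinary $\tau$-moves, and gives no invariant linking the prefix $u_1$ of the non-geodesic word $u_1v_1$ to the prefix $u_3$ of its length-reduced form. What the paper proves at this point, and what your argument is missing, is that each reversed move has one of at most $m^2$ types $(p,-n)$ or $(-n,p)$, and that $(u_2)_G$ is determined by $(u_3)_G$ together with the number of moves of each type; since the number of moves is bounded linearly in $k$, this gives at most $k^{m^2}$ possibilities for $(u_2)_G$, and $(u_1)_G=(u_2)_G$. Your $(r_p,r_n)$ count is in the right spirit, but it is asserted, not established, and your proposed route to it would fail.

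There is also a bookkeeping error in your handling of the $\Delta$-exponents: $r_0$, and hence $r_1=r+r_0$, is bounded by a multiple of $k$ only when the power $\Delta^{r_0}$ sits in $f_1$; when it sits in $f_2$ it is bounded only in terms of $l$, which may be far larger than $k$ (this is why Lemma~\ref{bound_each_preimage} says ``$r_0$ in the first case''). Consequently your bound $|r'|\le |r_1|\le Ck$, on which your enumeration of $(u_3)_G=g'\Delta^{s-r'}$ depends, does not hold. The paper avoids this by bounding the difference $r'-s$ directly: each reversed move of the second $\Delta$-stage consumes a maximal alternating subword of $u_4$ of the appropriate sign, so the number of possible values of $r'-s$ is controlled by $|u_4|$, which is polynomial in $k$, and in the signed case $r'$ is determined by the group element rather than enumerated. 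So while the skeleton of your argument matches the paper's, both the central counting step and the exponent bookkeeping need genuinely different arguments from the ones you propose.
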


\begin{proof}
We reverse the steps of compression.
Suppose that $w=u_4\delta^{r'-s}(v_4)\Delta^s$ is a word resulting
from compression of some element of $S(g,k,l)$,
and that $(u_4)_G = g'$.

We first examine the possible 
geodesics of the form $u_3v_3\Delta^{r'}$ from which 
$w$ might have been derived during compression
by length reducing moves.

If $v_4\Delta^s$ is unsigned, then we must have $u_3=u_4$,
$r'=s$, and $(u_3)_G = (u_4)_G$
Otherwise, if $\delta^{r'-s}(v_4)\Delta^s$ is positive,
$r'-s$ might be any non-negative integer bounded above by
the number of maximal positive alternating subwords in $u_4$, and
$u_3$ could be any word derived from $u_4$ by reversing $r'-s$
length reducing rules using $\Delta$,
while if $\delta^{r'-s}(v_4)\Delta^s$ is negative,
$s-r'$ might be any non-negative integer
bounded above by
the number of maximal negative alternating subwords in $u_4$,
$u_3$ could be a word derived from $u_4$ by reversing $s-r'$
length reducing rules using $\Delta^{-1}$. 
We can check that 
the value of $(u_3)_G$ is determined by $(u_4)_G$ and $r'-s$.

Hence there are at most $(k+1)$ possibilities
for $(u_3)_G$ for which $(u_4)_G =_G g'$.

We saw above that if $v_3$ is unsigned or
if it has the opposite sign to $r'$, then $|r'| \le k$, and so can take up to
$2k+1$ values. 
Otherwise, $v_3 \Delta^{r'}$ is a signed word,
and then $r'$ is uniquely determined by $(v_3 \Delta^{r'})_G$.

We next reverse the length reducing $\tau$-moves that transformed
$u_2v_2$ to $u_3v_3$.
Each reversed move involves replacing a subword
${}_p(x,y)\xi (z^{-1},t^{-1})_n$ or\linebreak
${}_n(x^{-1},y^{-1})\xi (z,t)_p$
with $0 < p,n < m$ and $p+n<m$ by
${}_{m-p}(y^{-1},x^{-1})\,\delta(\xi )\,(t,z)_{m-n}$
or ${}_{m-n}(y,x)\,\delta(\xi )\,(t^{-1},z^{-1})_{m-p}$,
where the left maximal alternating subword is in some $u_2^{(t+1)}$ and
the right one in $v_2^{(t+1)}$. Call these transformations of type
$(p,-n)$ or $(-n,p)$.
Then there are at most $m^2$ such possible types.
The element $(u_2)_G$ depends only on $(u_3)_G$
and the total number of $\tau$-moves of each type.
Since there are at most $k$ such $\tau$-moves in total, there are at most
$k^{m^2}$ possible elements $(u_2)_G$ for a given $(u_3)_G$.

And $u_1$ represents the same element as $u_2$.
A final reversal of a $\tau$-move on the final maximal alternating
suffix of $u_2$ recovers the word $u_1$, which represents the same
group element as $u_2$. And the deletion of an alternating
suffix of length at most $m-1$ from $u_1$ yields $u$; hence 
each choice of $(u_1)_G$ yields at most $m$ choices of $u$.
\end{proof}
The combination of Lemma \ref{bound_kappaS} and Lemma \ref{bound_each_preimage}
 gives the required polynomial bound on $S(g,k,l)$.
Hence the proof of D2 for dihedral Artin groups is complete.

\section{Artin groups of large type}
\label{largetype}
Our aim in this section is to prove that Artin groups satisfying the
hypotheses of Theorem~\ref{main_rd} satisfy D1 and D2, and hence have rapid
decay.  Much of what we say is true for any large-type Artin group. But
occasionally we shall need to restrict the groups we consider to
those satisfying the conditions of Theorem~\ref{main_rd}. For those results
where this is the case, we shall make it clear in the statement of the result;
the remaining results are proved for all Artin groups of large type.

We assume throughout this section that $G$ is an Artin group of large type,
with notation as defined in Section~\ref{notation}.
We may assume that not all $m_{ij}$ are infinite, for otherwise the group is
free, and rapid decay is known.
For any distinct pair of generators $x_i,x_j$, we let
$G(i,j)=G(j,i)$ be the subgroup of $G$ generated by $x_i$ and $x_j$.
We use $\Delta_{ij}$ to denote $(x_i,x_j)_{m_{ij}}$,
and $\delta_{ij}$ to denote the permutation of $\{x_i,x_i^{-1},x_j,x_j^{-1}\}$
induced by conjugation by $\Delta_{ij}$.

The process of reducing words in $G$ to shortlex minimal form
is described in \cite[Proposition 3.3]{HoltRees}.
In \cite[Section 3]{HoltRees}, a leftward or rightward {\em critical
sequence} is defined as a sequence of $\tau$-moves applied to a word,
in which successive moves overlap in a single letter.
For a geodesic word whose maximal proper prefix is already shortlex minimal,
but which is not itself lexicographically minimal, a lexicographic reduction
to a shortlex minimal word can be achieved by  a single leftward critical
sequence, known as a {\em leftward lex reducing sequence}.
Similarly, for a non-geodesic word whose maximal proper prefix is shortlex
minimal, a length reduction to a shortlex minimal word can be achieved by a
single rightward critical sequence followed by a single free cancellation;
the combination is known as a {\em rightward length reducing sequence}.

For example, with $m_{12},m_{13},m_{23} = 3,4,5$ and writing $a,b,c$ for
$x_1,x_2,x_3$:
\begin{eqnarray*}
\alpha \mathbf{(a b^{-2} a^{-1})} c b^2 c^{-1} b^{-1} a c a^2 c a^{-1},\\
\alpha b^{-1} a^{-2} \mathbf{(b c b^2 c^{-1} b^{-1})} a c a^2 c a^{-1},\\
\alpha b^{-1} a^{-2} c^{-1} b^{-1} c^2 b \mathbf{(c  a c a^2 c)} a^{-1},\\
\alpha b^{-1} a^{-2} c^{-1} b^{-1} c^2 b a c^2 a c \mathbf{(a a^{-1})},\\
\alpha b^{-1} a^{-2} c^{-1} b^{-1} c^2 b a c^2 a c,
\end{eqnarray*}
where $\alpha$ is an arbitrary word,
is a rightward length reducing sequence in which the critical words to which
$\tau$-moves are applied are bracketed and printed in boldface,
and the final move is the free reduction of $a a^{-1}$.

\subsection{The geodesics in Artin groups of large type}
We shall need some results about geodesics in Artin groups of large type.
The first of these is proved in \cite{HoltRees}.

\begin{proposition}
[{\cite[Proposition 4.5]{HoltRees}}]
\label{twoposs}
Suppose that $v,w$ are any two geodesic words representing the same group
element, and that $\l{v} \ne \l{w}$.
Then:
\begin{mylist}
\item[(1)] $\l{v}$ and $\l{w}$ have different names;
\item[(2)] The maximal 2-generator suffixes of $v$ and $w$ involve generators
with names equal to those of $\l{v}$ and $\l{w}$;
\item[(3)]
Any geodesic word equal in $G$ to $v$ must end in $\l{v}$ or in $\l{w}$.
\end{mylist}
Corresponding results apply if $\f{v} \ne \f{w}$.
\end{proposition}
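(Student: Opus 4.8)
The plan is to reduce all three assertions to the two-generator (dihedral) case, where they follow from the explicit form of $\tau$ on critical words. Inspecting the five defining cases of $\tau$ one reads off two facts: \textbf{(A)} if $q$ is a critical word then $\l{q}$ and $\l{\tau(q)}$ always have \emph{different names} (in each case the two last letters involve the two distinct generators of the dihedral subgroup); and \textbf{(B)} a critical word involves \emph{exactly two} generators (in form (i), $p+n=m\ge 3$ forces one of ${}_p(x,y)$, $(z^{-1},t^{-1})_n$ to have length $\ge 2$; in forms (ii), (iii) already ${}_m(x,y)$ involves both). The other ingredient I would use is the large-type analogue of Lemma~\ref{geodlem}: if $u$ is geodesic and $ua$ is non-geodesic, then either $\l{u}=a^{-1}$, or $u$ has a critical suffix $q$ — a word that is critical in the two-generator subgroup it lies in — with $\l{\tau(q)}=a^{-1}$. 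Write $x$ and $y$ for the names of $\l{v}$ and $\l{w}$.

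\textbf{Part (1).} Suppose $x=y$, so $\l{w}=(\l{v})^{-1}$; put $a:=\l{v}$. Since $w=(\p{w})\l{w}$ with $\l{w}=a^{-1}$, we have $(va)_G=(\p{w})_G$, a geodesic word of length $|v|-1$; as $|va|=|v|+1$, the word $va$ is non-geodesic. Since $\l{v}=a\neq a^{-1}$, the large-type form of Lemma~\ref{geodlem} produces a critical suffix $q$ of $v$ with $\l{\tau(q)}=a^{-1}$; but $\l{q}=\l{v}=a$, so $\l{q}$ and $\l{\tau(q)}$ share a name, contradicting \textbf{(A)}. Hence $x\neq y$.

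\textbf{Parts (2) and (3).} For (3), suppose a geodesic word $v'=_G v$ has last letter $c\notin\{\l{v},\l{w}\}$, of name $z$; applying Part (1) to $(v,v')$ and to $(w,v')$ shows $x,y,z$ are pairwise distinct. As in Part (1), $vc^{-1}$ and $v(\l{w})^{-1}$ are non-geodesic (no free cancellation, since $c$ and $\l{w}$ have names $\neq x$), so Lemma~\ref{geodlem} supplies critical suffixes $q$ and $q'$ of $v$ with $\l{\tau(q)}=c$ and $\l{\tau(q')}=\l{w}$. By \textbf{(B)} and $\l{q}=\l{q'}=\l{v}$, the names occurring in $q$ are exactly $x,z$ and those in $q'$ exactly $x,y$; but $q,q'$ are both suffixes of $v$, so one is a suffix of the other, and the names of the shorter then form a subset of those of the longer, forcing $y=z$ — contrary to pairwise distinctness. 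This proves (3). For (2): $v(\l{w})^{-1}$ is non-geodesic, so $v$ has a critical suffix $q'$ which, by \textbf{(A)}, \textbf{(B)} and $\l{q'}=\l{v}$, involves exactly the names $x$ and $y$; being a suffix of $v$ that involves two names, $q'$ is a suffix of the maximal two-generator suffix of $v$, which therefore involves exactly $\{x,y\}$; applying the same argument to $w$ and $w(\l{v})^{-1}$ handles the maximal two-generator suffix of $w$. Finally, the statements about first letters follow by applying everything above to the reversed words $\bar v,\bar w$, using that word reversal is an anti-automorphism of $G$ (the standard relations are reversal-invariant) which preserves geodesics, names and two-generator subgroups.

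The hard part will be establishing the large-type analogue of Lemma~\ref{geodlem}. In a dihedral group it is immediate from the combinatorics of $p(w)$, $n(w)$ and the definition of $\tau$, but in a general large-type group the length-reduction of a non-geodesic word is carried out by a \emph{chained} rightward critical sequence rather than a single $\tau$-move (compare the worked example earlier in this section), so one must show that appending a single letter $a$ to a geodesic word can only create non-geodesy through a critical two-generator configuration occupying the extreme right-hand end of the word. I would expect this to come out of the analysis of rightward length-reducing sequences in \cite[Section~3]{HoltRees} — by tracking how the rightmost two-generator syllable block and the final letter evolve along such a sequence — and it is here that essentially all of the technical effort goes.
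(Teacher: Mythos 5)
The paper does not prove this statement at all: it is imported verbatim from \cite[Proposition 4.5]{HoltRees}, so there is no internal proof to compare with, and your argument must stand on its own. Your observations (A) and (B) about critical words are correct, but the single ingredient on which everything rests, your ``large-type analogue of Lemma~\ref{geodlem}'', is not merely hard to establish --- it is false. If $u$ is geodesic, $ua$ is non-geodesic and $\l{u}\ne a^{-1}$, then $u$ need not have a critical suffix at all. The paper's own worked example (with $m_{12},m_{13},m_{23}=3,4,5$) is a counterexample: there $u=\alpha\, a b^{-2} a^{-1} c b^2 c^{-1} b^{-1} a c a^2 c$ is geodesic, $ua^{-1}$ is non-geodesic, $\l{u}=c\ne a$, yet the maximal $2$-generator suffix $aca^2c$ of $u$ satisfies $p+n=3<4=m_{13}$ in $\langle a,c\rangle$, and no shorter suffix is critical either; the non-geodesy is created by the critical word $ab^{-2}a^{-1}$ far from the right-hand end and is transmitted rightwards by a chain of overlapping $\tau$-moves, i.e.\ a rightward length reducing sequence as in Proposition~\ref{ngreduceprop}~(2). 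So ``a critical two-generator configuration occupying the extreme right-hand end of the word'' is exactly what fails, and the step you defer to the end cannot be proved in the form your argument needs.

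The statement that is actually available (and is what the argument in \cite{HoltRees} runs on) is that $v\,\l{w}^{-1}$ admits a rightward length reducing sequence whose \emph{last} critical word ends at $\l{v}$ and whose letters, apart from its first one (the overlap with the preceding move), are untouched letters of $v$. With that substitute your scheme can be repaired: the untouched tail of the last critical word is a genuine $2$-generator suffix of $v$, of length at least two, which cannot be a power of a single generator (a word $c\,x^k$ has $p+n\le 2<m$, so is never critical), its last letter is $\l{v}$ and the last letter of the $\tau$-image is $\l{w}$, so (A) yields (1) and the tail yields (2); for (3) you must compare the untouched tails arising from $v c^{-1}$ and $v\,\l{w}^{-1}$ rather than the critical words themselves, because with chains those critical words are no longer suffixes of $v$, which is where your ``one is a suffix of the other'' step breaks down as written. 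But this structure of rightward critical sequences is precisely the technical content you have postponed, so as it stands the proposal has a genuine gap at its central step.
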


\begin{corollary}\label{twoposscor}
If $wa$ is a geodesic word for some $a \in A$ and $k>1$, then $wa^k$ is
a geodesic word for all $k>1$.
\end{corollary}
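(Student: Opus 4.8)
The plan is to reduce everything to the single assertion that if $wa$ is geodesic then $wa^2$ is geodesic, and then to iterate. Indeed, if $wa^k$ is geodesic for some $k\ge 1$ then its prefix $wa$ is geodesic, and applying the single-step assertion successively with $w$ replaced by $wa$, $wa^2$, $\dots$ (using $wa^{j}\cdot a = wa^{j+1}$) shows that $wa^k$ is geodesic for every $k\ge 1$.

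To prove the single step, suppose for a contradiction that $wa$ is geodesic but $wa^2=(wa)a$ is not. Apply Lemma~\ref{geodlem} to the geodesic word $wa$ and the letter $a$: since $\l{wa}=a\neq a^{-1}$, the only remaining possibility is that $wa$ has a critical suffix $v$ with $\l{\tau(v)}=a^{-1}$. Because $v$ is a suffix of $wa$ we have $\l{v}=\l{wa}=a$. Since $v$ is critical it satisfies $p(v)+n(v)=m$ and is therefore geodesic; likewise $\tau(v)$ is critical, hence geodesic, and it represents the same group element as $v$.

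We have thus produced two geodesic words $v$ and $\tau(v)$ for the same element of $G$ with $\l{v}=a$ and $\l{\tau(v)}=a^{-1}$, and these last letters are distinct. Proposition~\ref{twoposs}(1) then says that $\l{v}$ and $\l{\tau(v)}$ have different names — but $a$ and $a^{-1}$ have, by definition, the same name. This contradiction establishes the single step and hence the corollary.

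I expect no serious obstacle here: the argument is a short proof by contradiction built entirely from Lemma~\ref{geodlem} and Proposition~\ref{twoposs}. The only points that need to be checked carefully are that the critical suffix $v$ and its $\tau$-image are both genuinely geodesic words, and that they have last letters $a$ and $a^{-1}$ respectively; once these are in place, part (1) of Proposition~\ref{twoposs} delivers the contradiction immediately, with no need to examine the internal structure of $w$ or of the dihedral subgroup in which $v$ lives.
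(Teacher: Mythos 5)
Your overall shape of argument---reduce to the single step $wa$ geodesic $\Rightarrow$ $wa^2$ geodesic, then derive a contradiction from Proposition~\ref{twoposs}(1) by exhibiting two geodesic representatives of the same element ending in $a$ and in $a^{-1}$---is essentially the paper's (the paper takes $k$ minimal with $wa^k$ non-geodesic and contradicts Proposition~\ref{twoposs}(1) for the element $wa^{k-1}$). The gap is in how you produce the representative ending in $a^{-1}$. You invoke Lemma~\ref{geodlem}, but that lemma is a statement about the dihedral group $\DA{m}$: there $A=\{x_1^{\pm1},x_2^{\pm1}\}$ and ``critical word'' and $\tau$ are defined only for two-generator words. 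Corollary~\ref{twoposscor} is stated and needed for an arbitrary Artin group of large type, where $w$ may involve many generators (and it is later applied in exactly that setting, e.g.\ to the word $a^su''_2$ in the proof of Lemma~\ref{geodfaclem1}). In the many-generator group, the non-geodesity of $(wa)a$ is in general witnessed not by a single critical suffix of $wa$ but by a rightward length reducing sequence of $\tau$-moves passing through several overlapping 2-generator subwords followed by a free cancellation (this is Proposition~\ref{ngreduceprop}(2), which in this paper is proved only \emph{after} the corollary), so the dichotomy of Lemma~\ref{geodlem} simply is not available for the word $wa$.

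The repair needs none of that machinery, and is what the paper does: with $k\ge 2$ minimal such that $wa^k$ is non-geodesic, set $g=(wa^{k-1})_G$. Since every generator maps to $1$ under the homomorphism $G\to\Z$, the lengths of $g$ and $ga$ differ by exactly one, and non-geodesity of $wa^k$ forces $|ga|=|g|-1$; hence if $u$ is geodesic for $ga$, then $ua^{-1}$ is a geodesic representative of $g$ ending in $a^{-1}$, while $wa^{k-1}$ is one ending in $a$. These last letters have the same name, contradicting Proposition~\ref{twoposs}(1). (As a side remark, in the genuinely dihedral situation your step is also slightly roundabout: inspection of the definition of $\tau$ shows that $\l{v}$ and $\l{\tau(v)}$ always have different names, so the alternative ``critical suffix $v$ with $\l{\tau(v)}=a^{-1}$ and $\l{v}=a$'' can never occur; but this observation, like Lemma~\ref{geodlem} itself, does not transfer to words involving more than two generators.)
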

\begin{proof}
Otherwise, choosing $k$ minimal with $wa^k$ non-geodesic, the group
element $wa^{k-1}$ has geodesic representatives ending both in $a$ and
in $a^{-1}$, contradicting Proposition~\ref{twoposs} (1).
\end{proof}

\begin{proposition}\label{longestld}
Let $g \in G$ and $x_i,x_j \in X$ with $i \ne j$. Then $g$ has a unique left
divisor $\LD_{ij}(g) \in G(i,j)$ of maximal length. Furthermore, if $w$ is
any geodesic word representing $g$, and $u$ is the maximal
$\{x_i,x_j\}$-prefix of $w$,
then $\LD_{ij}(g) =_G u a^r$ for some $r \ne 0$ with
$a \in \{x_i^{\pm 1}, x_j^{\pm 1} \}$ and $|\LD_{ij}(g)| = |u| + r$.

Similarly, $g$ has a unique right divisor $\RD_{ij}(g) \in G(i,j)$ of
maximal length, to which the corresponding results apply.
\end{proposition}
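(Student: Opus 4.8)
The plan is to prove existence, uniqueness, and the explicit form of $\LD_{ij}(g)$ simultaneously, by reducing everything to the dihedral subgroup $G(i,j)$ and invoking the structure of geodesics provided by \cite{HoltRees}. First I would fix a geodesic word $w$ representing $g$ and let $u$ be its maximal $\{x_i,x_j\}$-prefix, so that $w = u\,\eta$ where either $\eta$ is empty or $\f{\eta}$ has a name different from $x_i$ and $x_j$. The claim is that every left divisor $h$ of $g$ lying in $G(i,j)$ satisfies $h =_G u a^r$ for suitable $a\in\{x_i^{\pm1},x_j^{\pm1}\}$ and $r$ (possibly $0$), from which uniqueness of the maximal one is immediate by comparing lengths. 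To establish this, suppose $h\in G(i,j)$ is a left divisor of $g$, say $g = h g'$ with $|h|+|g'| = |g|$. Pick a geodesic word $v_h$ over $\{x_i^{\pm1},x_j^{\pm1}\}$ for $h$ and a geodesic word $v'$ for $g'$; then $v_h v'$ is a geodesic representative of $g$. Now I would apply Proposition~\ref{twoposs} (and its left-hand analogue) repeatedly, or rather its consequence about maximal $2$-generator prefixes: since $w$ and $v_h v'$ are both geodesics for $g$, part (2) of Proposition~\ref{twoposs} (applied to first letters) forces the maximal $\{x_i,x_j\}$-prefixes of the two words to be closely linked. The key point is that two geodesic words for the same element that begin with letters of name $x_i$ or $x_j$ can differ at the front only by a $\tau$-move inside $G(i,j)$, so their maximal $\{x_i,x_j\}$-prefixes represent the \emph{same} element of $G(i,j)$ up to a power of a single generator at the junction.

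Next I would make that precise using the dihedral analysis of Section~\ref{dihedral}. Work inside $G(i,j) = \DA{m_{ij}}$. The element $h =_G (v_h)_G$ and the element $u =_G$ (the maximal prefix of $w$) both lie in $G(i,j)$, and $u$ is a left divisor of $g$ as well. I want to show $h$ and $u$ are comparable, i.e. one is a left divisor of the other within $G(i,j)$, and moreover that the longer one exceeds the shorter by a single syllable $a^r$. For this I would argue: after the critical reductions carrying $v_h v'$ to $w$, all $\tau$-moves affecting the front of the word are leftward critical sequences, and by the part of \cite{HoltRees} quoted just before Proposition~\ref{twoposs}, a letter of name in $\{x_i,x_j\}$ can only be moved past another letter of name in $\{x_i,x_j\}$. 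Hence the set of positions occupied by $\{x_i,x_j\}$-letters in any geodesic for $g$ forms a prefix of bounded, in fact determined, content; the maximal such prefix, as an element of $G(i,j)$, is therefore well-defined up to the free choice of which generator sits at the boundary, which is exactly the $a^r$ ambiguity. Then $\LD_{ij}(g)$ is the element of $G(i,j)$ obtained by taking that prefix $u$ and absorbing the maximal run $a^r$ of the next boundary letter; it is a left divisor because $u a^r$ is still a prefix of a geodesic for $g$ (Corollary~\ref{twoposscor} guarantees that extending by powers of $a$ preserves geodesity until the run ends), and it is maximal because any longer $G(i,j)$-divisor would force a geodesic for $g$ with more $\{x_i,x_j\}$-letters at the front than $w$ has, contradicting the position-content invariance. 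Uniqueness follows since two left divisors in $G(i,j)$ of the same maximal length would both equal $u a^r$ with the same $u$ and the same $a, r$ (the run length $r$ being maximal and $a$ being forced once $r\neq 0$), hence coincide.

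The statement about $\RD_{ij}(g)$ follows by the symmetric argument, applying the "first letter" versions of Proposition~\ref{twoposs} in their "last letter" form (explicitly granted by the final sentence of that proposition) and reading words right-to-left; alternatively one can pass to the group anti-automorphism reversing words and quote the left-divisor case verbatim.

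I expect the main obstacle to be the bookkeeping in the middle step: making rigorous the claim that \emph{every} geodesic word for $g$ has the same $\{x_i,x_j\}$-content at its front, so that the maximal $G(i,j)$-left-divisor is genuinely canonical and not merely geodesic-dependent. The danger is that a $\tau$-move well inside the word could, via a cascade, alter which letters near the front have names in $\{x_i,x_j\}$. Ruling this out requires care with the structure of leftward critical sequences from \cite[Section 3]{HoltRees} and the non-commutation of letters with distinct names that are not joined by the relevant relator; once that invariance is in hand, the explicit form $u a^r$ and uniqueness are essentially forced, and the dihedral computations needed (that $u a^r$ stays geodesic, that the run $a^r$ is the only residual freedom) are exactly the facts already assembled in Section~\ref{dihedral} and Corollary~\ref{twoposscor}.
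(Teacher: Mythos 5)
There is a genuine gap, and it sits exactly where you flag your ``main obstacle'': the invariance you need is false. You assert that all geodesic representatives of $g$ have the same $\{x_i,x_j\}$-content at the front (``the set of positions occupied by $\{x_i,x_j\}$-letters \dots forms a prefix of \dots determined content''), and that maximal $\{x_i,x_j\}$-prefixes of two geodesics ``represent the same element of $G(i,j)$ up to a power of a single generator at the junction''. The paper's own example (given after Proposition~\ref{onetail}, with $m_{12},m_{13},m_{23}=4,3,3$) refutes this: $w=babacabab$, $w_1=babcacbab$ and $w_2=abacbcaba$ all represent the same element $g$, but their maximal $\{x_1,x_2\}$-prefixes are $baba$, $bab$ and $aba$ --- of different lengths, and $bab\neq_G aba$. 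A $\tau$-move in a \emph{different} dihedral subgroup (here $aca\to cac$ in $G(1,3)$) can change a front letter of name $x_1$ into one of name $x_3$, so letters with names in $\{x_i,x_j\}$ are not merely ``moved past'' each other, and no such invariance result is available in \cite{HoltRees}. (Your opening claim that \emph{every} left divisor $h\in G(i,j)$ of $g$ equals $ua^r$ is also false as stated --- take $h=1$ --- though that is presumably a slip for maximal-length divisors.) What is true, and is precisely the content of the proposition, is the weaker statement that each such prefix $u$ satisfies $\LD_{ij}(g)=_Gua^r$; different geodesics may have genuinely different $u$ and even different boundary letters $a$ (that is why Proposition~\ref{onetail} needs the $(3,3,m)$-hypothesis, while Proposition~\ref{longestld} does not). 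So deriving the proposition from the claimed invariance is circular-or-false, not a bookkeeping issue.

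The paper's proof avoids this by making the comparison one-directional rather than between two arbitrary geodesics: fix a shortlex ordering in which $x_i^{\pm1},x_j^{\pm1}$ are the smallest generators, let $w'$ be the shortlex normal form of $g$ for that ordering with maximal $\{x_i,x_j\}$-prefix $u'$, and reduce an arbitrary geodesic $w$ to $w'$ prefix-by-prefix using \cite[Proposition 3.3]{HoltRees}. Because a leftward lex-reducing sequence cannot replace a letter of the current $\{x_i,x_j\}$-prefix by a letter of another name (that would be lexicographically increasing for this ordering), the prefix can only grow, and \cite[Lemma 3.8(2)]{HoltRees} forces every growth step to append powers of one and the same letter $a$. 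This yields $u'=_Gua^r$ with $|u'|=|u|+r$ for \emph{every} geodesic $w$, and maximality and uniqueness of $\LD_{ij}(g)=(u')_G$ then follow by the comparison-of-lengths argument you had in mind. If you want to salvage your approach, you should replace the false invariance by this monotone reduction to a single distinguished geodesic (or prove some equally strong structural statement about how $\tau$-moves in subgroups $G(i,i')$, $i'\notin\{i,j\}$, interact with the $\{x_i,x_j\}$-prefix); Proposition~\ref{twoposs} and Corollary~\ref{twoposscor} alone do not control that interaction.
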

\begin{proof} Order the monoid generators of $G$ such that $x_i^{\pm 1},
x_j^{\pm 1}$ (in some order) come first, and let $w'$ be the shortlex least
representative of $g$, using this ordering, with maximal $\{x_i,x_j\}$-prefix
$u'$.

Let $w$ be an arbitrary geodesic representative of $g$ and let $u$ be the
maximal $\{x_i,x_j\}$-prefix of $w$. We consider the process of reducing $w$ to
its shortlex form $w'$ by considering each letter of $w$ in turn, and
reducing the prefix ending in that letter to shortlex form. Suppose
that $w_0=w$, that $w$ reduces through the sequence of words $w_1,w_2,\ldots$
to $w_n=w'$ and that $u_0=u,u_1,\ldots, u_n=u'$ is the corresponding
sequence of maximal $\{x_i,x_j\}$-prefixes.

By \cite[Proposition 3.3]{HoltRees}, the prefix of length $k$ in $w_k$ 
is either already shortlex reduced, or can be reduced to shortlex
form with a single leftward lex reducing sequence.
Such a reduction cannot change a letter of $u_k$ to a letter with name
not in $\{x_i^{\pm 1}, x_j^{\pm 1} \}$, since that would be shortlex
increasing.  So either $u_{k+1}=u_k$ or $u_{k+1}$ is the
shortlex reduction of $u_ka^s$ for some $a \in \{x_i^{\pm 1}, x_j^{\pm 1 }\}$
and $s > 0$.
By \cite[Lemma 3.8(2)]{HoltRees}, after such a reduction, the names of
the next two letters in $w_{k+1}$ after $u_{k+1}$ are the name of $a$
and a generator not in $\{x_i,x_j\}$. So if subsequent reductions occur that
increase the length of the maximal $i,j$-prefix, then they all involve adjoining the same letter
$a$. (We can't adjoin $a$ and then $a^{-1}$ or the word would not be geodesic!)
So we have $u' =_G u a^r$ with $|u'| = |u| + r$. as claimed. Since
the equation holds for any choice of $w$, we have proved that
$(u')_G$ is the unique longest left divisor of $g$ in $G(i,j)$.

The proof for the maximal right divisor is similar.
\end{proof}

\begin{proposition}\label{ngreduceprop}
\begin{mylist}
\item[(1)]
Let $v,w$ be two geodesic words representing the same group
element $g$, with $\l{v} \ne \l{w}$. Then a single rightward critical sequence
can be applied to $v$ to yield a word ending in $\l{w}$.
\item[(2)] Let $v$ be a freely reduced non-geodesic word with $v=wa$ with
$a \in A$ and $w$ geodesic. Then $v$ admits a rightward length reducing
sequence.
\end{mylist}
\end{proposition}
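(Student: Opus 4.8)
The plan is to establish part (1) first and then obtain part (2) as a short consequence.

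For part (1) I would begin with Proposition~\ref{twoposs}. By its clause (1) the letters $\l{v}$ and $\l{w}$ have distinct names, say $x_i$ and $x_j$; set $H:=G(i,j)$, so $H\cong\DA{m_{ij}}$, and write $m:=m_{ij}$. By clause (2) the maximal $2$-generator suffixes of $v$ and of $w$ are words over $\{x_i^{\pm1},x_j^{\pm1}\}$, so I may write $v=v_0v_1$ and $w=w_0w_1$ with $v_1,w_1$ these suffixes, noting that $\l{v_1}=\l{v}$ has name $x_i$ and $\l{w_1}=\l{w}$ has name $x_j$. Let $g$ denote the common group element and put $r:=\RD_{ij}(g)$. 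Applying Proposition~\ref{longestld} to right divisors, $r$ has a geodesic representative of the form $a^{s}v_1$ with $a\in\{x_i^{\pm1},x_j^{\pm1}\}$, and also one of the form $b^{t}w_1$ with $b\in\{x_i^{\pm1},x_j^{\pm1}\}$; these are two geodesic representatives of $r$ whose last letters have different names. Since each of these words lies over $\{x_i^{\pm1},x_j^{\pm1}\}$ and is geodesic in $G$, it is geodesic in the dihedral group $\DA{m}$, so Lemma~\ref{geodlem} applies inside $\DA{m}$: $a^{s}v_1$ has a critical suffix $c$ with $\l{\tau(c)}=\l{w}$, and the single $\tau$-move replacing $c$ by $\tau(c)$ turns $a^{s}v_1$ into a geodesic of $r$ ending in $\l{w}$. (When $v_0$ is empty we already have $g=r\in H$, and this is the whole argument.)

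It then remains to transfer this move back to $v$. Here I would use the description of the geodesic representatives of $g$ supplied by Proposition~\ref{longestld}, together with the canonical reduction procedure of \cite[Proposition~3.3]{HoltRees} --- which realises any passage between geodesics for $g$ by leftward and rightward critical sequences --- to show that $v$ can be carried by a rightward critical sequence to a geodesic word $v_0'\,a^{s}v_1$ for $g$, where $v_0'$ represents $gr^{-1}$; informally, this exposes the maximal right divisor of $g$ in $H$ as a genuine suffix. The critical suffix $c$ found above then sits inside this exposed copy of $a^{s}v_1$, so appending the $\tau$-move on $c$ produces a word ending in $\l{w}$. The delicate point --- and the one I expect to be the main obstacle --- is to check that the $\tau$-moves generated in these two stages chain together into a \emph{single} rightward critical sequence, i.e. that consecutive moves overlap in exactly one letter and the process never needs to run leftward; I would verify this by tracking the letters affected by each move, in the style of \cite[Section~3]{HoltRees}.

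For part (2), write $h:=(w)_G$ and $g:=(v)_G=ha$. First I would note that $x_i\mapsto1$ extends to a homomorphism $G\to\Z$, because both sides of each defining relator ${}_{m_{ij}}(x_i,x_j)={}_{m_{ij}}(x_j,x_i)$ have image $m_{ij}$; hence any two words representing a fixed element of $G$ have equal length parity, so $|g|\not\equiv|w|\pmod2$. Since $v=wa$ is freely reduced and non-geodesic we have $|w|-1\le|g|\le|w|$, and therefore $|g|=|w|-1$. Thus $h=ga^{-1}$ with $|h|=|g|+1$, which forces $h$ to have a geodesic representative ending in $a^{-1}$, while $w$ itself is a geodesic representative of $h$ ending in $\l{w}$, and $\l{w}\neq a^{-1}$ by free reducedness, so $\l{w}$ and $a^{-1}$ are distinct letters. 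By part (1) a single rightward critical sequence transforms $w$ into a geodesic $w^{*}$ of $h$ with $\l{w^{*}}=a^{-1}$; regarded as acting on $v=wa$, all of its $\tau$-moves lie inside the prefix $w$ and leave the terminal letter $a$ untouched, so it carries $v$ to $w^{*}a$, whose terminal pair $a^{-1}a$ is now freely cancellable. This rightward critical sequence on $v$, followed by that one free cancellation, is the desired rightward length reducing sequence.
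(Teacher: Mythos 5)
Your part (2) is fine and is essentially the paper's argument: parity (or Lemma~\ref{geodlem}) shows $|v|_G=|w|-1$, so $w$ has a geodesic representative ending in $a^{-1}$, and part (1) plus one free cancellation gives the rightward length reducing sequence. The problem is part (1), where the step you yourself flag as ``the delicate point'' is in fact the entire content of the proposition, and your proposal does not supply an argument for it. You reduce to the dihedral statement that some geodesic representative $a^{s}v_1$ of $\RD_{ij}(g)$ admits a single $\tau$-move producing a word ending in $\l{w}$ (this much is sound), but you then need $v$ itself to be carried by a \emph{single rightward critical sequence} to a word in which that dihedral suffix is exposed, and you only say you ``would verify this by tracking the letters affected by each move.'' The citation you lean on does not do this job: \cite[Proposition~3.3]{HoltRees} describes reduction to shortlex normal form (leftward lex-reducing sequences for geodesic prefixes, rightward length-reducing sequences for non-geodesic ones); passing between two arbitrary geodesic representatives of $g$ is in general a composition of many $\tau$-moves with no reason to organise themselves into one rightward critical sequence. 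So as written the proof has a genuine gap exactly at its crux.

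The paper closes this gap by induction on $|v|$, and the inductive set-up is the idea your proposal is missing. Write $v=v'u$ with $u$ the maximal $2$-generator suffix of $v$. If $u$ already has a geodesic representative ending in $\l{w}$, one is done inside the dihedral subgroup by \cite[Corollary~2.6]{HoltRees}. Otherwise Proposition~\ref{longestld} gives $\RD_{ij}(g)=_G a^{r}u$ with $a\in\{x_i^{\pm1},x_j^{\pm1}\}$, and the key observation is that $(v')_G=\bigl(g\,\RD_{ij}(g)^{-1}\bigr)a^{r}$ therefore has a geodesic representative ending in $a$; so the inductive hypothesis applies to the shorter word $v'$ and yields a single rightward critical sequence turning $v'$ into a word ending in $a$. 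A final $\tau$-move on the critical word $au$ (via \cite[Lemma~2.8]{HoltRees}) then produces the letter $\l{w}$, and since this move overlaps the previous one in precisely the letter $a$, the whole chain is by construction a single rightward critical sequence. Note also that the paper never needs to expose the full suffix $a^{r}u$ (which would be harder); it only needs the last letter of the prefix to become $a$, which is what the induction delivers. If you want to salvage your write-up, replace the ``transfer'' paragraph by this induction.
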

\begin{proof}
The proof of (1) is by induction on $|v|$, the result being vacuously
true when $|v|=1$.
Let $x_i$ and $x_j$ be the names of  $\l{v}$ and $\l{w}$, and let $v = v'u$,
where  $u$ is the longest 2-generator suffix of $v$. If $u$ has a geodesic
representative ending in $\l{w}$, then the result follows from
\cite[Corollary 2.6]{HoltRees}, so suppose not.
By Proposition \ref{longestld}, we have $\RD_{ij}(g) =_G a^r u$ for some
$a \in \{x_i^{\pm 1},x_j^{\pm 1}\}$, and also $\RD_{ij}(g)$ has a geodesic
representative ending in $\l{w}$.  So, by \cite[Lemma 2.8]{HoltRees},
a single $\tau$-move can be applied to $au$ to give a word ending in $\l{w}$.
Now, by inductive hypothesis, a single rightward critical sequence can be
applied to $v'$ 
to yield a word ending in $a$, and following this by the
above $\tau$-move results in the required critical sequence, which completes
the proof of (1).

If $v$ is as in (2), then $w$ has a geodesic representative ending in $a^{-1}$,
and the result follows from (1).
\end{proof}

In fact the above proposition provides a slightly shorter proof that Artin
groups of large type have FFTP than the one in \cite{HoltRees}.

The next proposition implies that for the groups in Theorem~\ref{main_rd},
for a given $g$, $i$ and $j$,
the letter $a$ arising in Proposition~\ref{longestld} is unique.

\begin{proposition}\label{onetail}
Assume that $G$ satisfies the hypotheses of Theorem~\ref{main_rd},
and let $g \in G$ and $x_i,x_j \in X$ with $i \ne j$.  Then either
\begin{mylist}
\item[(1)] every geodesic word representing $g$ has a prefix that 
represents $LD_{ij}(g)$; or
\item[(2)] there is a unique letter $a \in \{x_i^{\pm 1},x_j^{\pm 1}\}$,
and an element $\LD'_{ij}(g) \in G(i,j)$ such that, for any geodesic
representative $w$ with maximal $\{x_i,x_j\}$-prefix $u$,
we have $\LD_{ij}(g)=_G ua^r$ and $u=_G \LD'_{ij}(g)a^s$,
for $r,s \geq 0$.
\end{mylist} 
In case (1), we define $\LD'_{ij}(g)=\LD_{ij}(g)$, for future reference.
\end{proposition}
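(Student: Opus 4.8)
The plan is to reduce the statement to the single assertion that the letter $a$ of Proposition~\ref{longestld} does not depend on the chosen geodesic representative of $g$, and then to prove that assertion by a contradiction argument that localises to a three-generator parabolic, where the $(3,3,m)$-hypothesis finally enters. First the easy reductions. If $g\in G(i,j)$ then $\LD_{ij}(g)=g$, every geodesic word for $g$ lies in $G(i,j)$ and equals its own maximal $\{x_i,x_j\}$-prefix, and alternative~(1) holds with $\LD'_{ij}(g):=\LD_{ij}(g)$; so I may assume $g\notin G(i,j)$. Ordering the monoid generators so that $x_i^{\pm1},x_j^{\pm1}$ come first, let $w'$ be the shortlex-least geodesic for $g$; by the proof of Proposition~\ref{longestld} its maximal $\{x_i,x_j\}$-prefix represents $\LD_{ij}(g)$, and for any geodesic $w$ with maximal $\{x_i,x_j\}$-prefix $u$ the reduction of $w$ to $w'$ by leftward lex-reducing sequences only ever lengthens this prefix and does so by repeatedly appending a single letter $a=a(w)$, so that $\LD_{ij}(g)=_G u\,a(w)^{r}$ with $|\LD_{ij}(g)|=|u|+r$, $r\ge0$, and $a(w)$ is determined by $w$ once $r>0$.

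Next I would show that the proposition follows once $a(w)$ is known to be a fixed letter $a$ for every geodesic $w$ with $r>0$. Let $\LD'_{ij}(g)$ be the group element represented by a maximal $\{x_i,x_j\}$-prefix of least length among all geodesics for $g$; this is well defined, since any two such prefixes equal $\LD_{ij}(g)\,a^{-R}$ for the same $R$. For an arbitrary geodesic with maximal $\{x_i,x_j\}$-prefix $u$, comparing the two geodesic words $u\,a^{r}$ and $\LD'_{ij}(g)\,a^{R}$, both representing $\LD_{ij}(g)$, gives $u=_G\LD'_{ij}(g)\,a^{R-r}$ with $R-r\ge0$; if $R=0$ this forces $u=_G\LD_{ij}(g)$ for every geodesic, which is alternative~(1), and otherwise it is alternative~(2) with $s=R-r$. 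So it suffices to prove: if $w_1,w_2$ are geodesics for $g$ whose maximal $\{x_i,x_j\}$-prefixes $u_1,u_2$ have positive exponents $r_1,r_2$, then $a(w_1)=a(w_2)$.

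Suppose not, and set $a_\nu:=a(w_\nu)$. Since $u_\nu a_\nu^{r_\nu}$ is a geodesic word ending in $a_\nu$ and representing $\LD_{ij}(g)$, Proposition~\ref{twoposs}(1) forces $a_1,a_2$ to have distinct names, say $x_i$ and $x_j$; by Proposition~\ref{twoposs}(3) every geodesic for $\LD_{ij}(g)$ ends in $a_1$ or $a_2$, so $\LD_{ij}(g)$ has several geodesic representatives in $\DA{m_{ij}}$ and hence, by the structure of geodesics in dihedral Artin groups (Section~\ref{dihedral}), $p+n=m_{ij}$ for them with the two relevant ends critical. Factor $g=_G\LD_{ij}(g)\,h$ as a left-divisor factorisation (here $h\ne 1$ because $g\notin G(i,j)$); every geodesic word for $h$ begins with a letter whose name lies outside $\{x_i,x_j\}$, for otherwise $\LD_{ij}(g)$ would extend to a longer $\{x_i,x_j\}$-left-divisor of $g$. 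Writing $w_\nu=u_\nu\beta_\nu$, a length count via Proposition~\ref{longestld} shows that $\beta_\nu$ is a geodesic word for $a_\nu^{r_\nu}h$ with $a_\nu^{r_\nu}\cdot h$ itself a geodesic factorisation, that $a_\nu^{r_\nu}\cdot(\text{geodesic for }h)$ is a competing geodesic beginning with $a_\nu$, and that $\f{\beta_\nu}$ has name outside $\{x_i,x_j\}$. Applying the first-letter form of Proposition~\ref{twoposs}(2) to this competing pair, the maximal $2$-generator prefix of $a_\nu^{r_\nu}\cdot(\text{geodesic for }h)$ involves $x_i$ (resp.\ $x_j$) together with the name of $\f{\beta_\nu}$; since the geodesics of the fixed element $h$ begin with at most two names, after dealing in parallel with the subcase of two distinct names one is reduced to $\f{\beta_1},\f{\beta_2}$ sharing a name $x_k$ with $k\notin\{i,j\}$, so that the whole configuration sits inside the parabolic $G(i,j,k)$. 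In $G(i,j,k)$ the critical reductions realising $\beta_1,\beta_2$ as rewritings of $a_1^{r_1}\cdot(\text{geod }h)$ and $a_2^{r_2}\cdot(\text{geod }h)$ must, by Proposition~\ref{twoposs}(2), use short braid relations in $G(i,k)$ and in $G(j,k)$ to move $x_i$- and $x_j$-letters past the leading $x_k$-letter of $h$; combining this with the critical shape of $\LD_{ij}(g)$ in $\DA{m_{ij}}$ and tracking the lengths of the alternating syllables through these $\tau$-moves, every case either already contradicts Proposition~\ref{twoposs}(1) (as happens, for instance, whenever $r_1$ or $r_2$ exceeds $1$) or leaves a triangle with labels $3,3,m$, $3\le m<\infty$, in $\Gamma$, against the $(3,3,m)$-hypothesis.

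The main obstacle is exactly that last, three-generator analysis: everything preceding it is formal bookkeeping with Propositions~\ref{longestld} and~\ref{twoposs}, but the finite case check of the $\tau$-moves at the junction between the $G(i,j)$-part and the third generator is delicate, and one must verify that no way of letting $\LD_{ij}(g)$ shed a tail letter into the $x_k$-part at both ends escapes producing a $(3,3,m)$-triangle. This is precisely the point at which the argument — and the proposition itself — fails for large-type Artin groups violating the hypothesis.
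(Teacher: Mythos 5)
Your overall strategy is the paper's: reduce the proposition to the statement that the letter $a$ produced by Proposition~\ref{longestld} is independent of the chosen geodesic, note that $m_{ij}<\infty$, and localise the would-be counterexample around a third generator. The reduction itself (the definition of $\LD'_{ij}(g)$, the fact that case (1) corresponds to $s=0$, the use of Propositions~\ref{longestld} and~\ref{twoposs} to force $a_1,a_2$ to have the names $x_i,x_j$) is fine. But the proof has a genuine gap exactly where the $(3,3,m)$-hypothesis must do its work: the final step, in which you ask the reader to track ``the lengths of the alternating syllables through these $\tau$-moves'' and assert that ``every case'' either contradicts Proposition~\ref{twoposs}(1) or produces a forbidden $(3,3,m)$-triangle, is an announcement of a case analysis rather than an argument. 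No cases are enumerated, no $\tau$-moves are actually analysed, and the parenthetical claim that $r_1>1$ or $r_2>1$ already contradicts Proposition~\ref{twoposs}(1) is given without justification. Since this step is precisely the content of the proposition (and you flag it yourself as ``the main obstacle''), what you have is a correct set-up with an unproved core. The subcase in which geodesics of $h$ begin with letters of two distinct names is likewise only waved at (``dealing in parallel'') rather than eliminated.

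For comparison, the paper closes the argument without any syllable-by-syllable analysis of $\tau$-moves. Writing $h'=\LD_{ij}(g)^{-1}g$, the element $a^rh'$ has geodesic representatives beginning with $a$ and beginning with a letter of name $x_{i'}\notin\{x_i,x_j\}$, so by Proposition~\ref{twoposs} every geodesic representative of $h'$ begins with the same letter $c$ of name $x_{i'}$, and $\LD_{ii'}(a^rh')$ has more than one geodesic representative, hence contains a critical subword; the same applies to $\LD_{ji'}(b^sh')$. Since $m_{ij}<\infty$, the $(3,3,m)$-hypothesis forbids $m_{ii'}=m_{ji'}=3$, so at least one of these two elements, say $\LD_{ii'}(a^rh')$, has at least four syllables in every geodesic representative. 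This gives $h'$ one geodesic representative with a prefix in $G(i,i')$ having at least three syllables and another with a prefix in $G(j,i')$ having at least two syllables; cancelling from $h'$ the maximal left-dividing power of $c$ then yields an element with geodesic representatives beginning with letters of names $x_i$ and $x_j$ whose maximal 2-generator prefixes are incompatible with Proposition~\ref{twoposs}(2), the desired contradiction. Supplying this argument (or a genuine, complete version of your ``finite case check'') is what is missing from your proposal.
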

\begin{proof}
Let $h := \LD_{ij}(g)$ and $h' := h^{-1}g$,
and suppose that case (1) does not hold.
Then $g$ has at least one geodesic representative whose maximal
$\{x_i,x_j\}$-prefix $u$ is a proper left divisor of $\LD_{ij}(g)$.
Proposition ~\ref{longestld} implies that $h=_Gua^r$ with $r>0$.

If $a$ is not uniquely determined, then $g$ has another geodesic
representative with maximal $\{x_i,x_j\}$-prefix $v$ such that $h=_G vb^s$,
with $s>0$ and $b \neq a$. 
We may assume that the names of $a$ and $b$ are $x_i$, $x_j$ respectively.
Note that, since $h$ has more than one geodesic representative, the vertices
corresponding to $x_i,x_j$ in the graph $\Gamma$ defining $G$ must be
joined by an edge (that is $m_{ij}<\infty$).

Now $a^r h' = u^{-1}g$ has a geodesic representative beginning with $a$ and
another geodesic representative beginning with a letter with name
$x_{i'}$ for some $i' \ne i,j$. By Proposition~\ref{twoposs}, all geodesic
representatives of $a^r h'$ have their longest 2-generator prefix in $G(i,i')$,
and hence all geodesic representatives of $h'$ must begin with the same
letter $c$, of which the name is $x_{i'}$. Furthermore, any geodesic
representative of $\LD_{ii'}(a^r h')$
must contain a critical subword, and similarly $\LD_{ji'}(b^s h')$
contains a critical subword.

From the hypothesis of Theorem~\ref{main_rd} and the fact that the vertices
$x_i$ and $x_j$ are joined by an edge in $\Gamma$, it follows that the edges
joining vertices $x_i,x_{i'}$ and vertices $x_j,x_{i'}$ cannot both have
the label 3. So, for at least one of $\LD_{ii'}(a^r h')$ and
$\LD_{ji'}(b^s h')$, it is true that any geodesic representative has at least
four syllables. Assume without loss that this is true for $\LD_{ii'}(a^r h')$.
So $h'$ has geodesic representatives $u_1,u_2$, where $u_1$ has a
prefix in $G(i,i')$ with at least three syllables and $u_2$ has a prefix
in $G(j,i')$ with at least two syllables. But then, if $c^s$ is the
highest power of $c$ that is a left divisor of $h'$, $c^{-s}h'$ has
a geodesic representative beginning with $x_i$ with a 2-generator prefix
in $G(i,i')$ and another geodesic representative beginning with $x_j$,
which contradicts Proposition~\ref{twoposs} (as applied to prefixes of
$u_1$ and $u_2$).

So $a$ is indeed uniquely determined.
We define $\LD'_{ij}(g) := \LD_{ij}(g)a^{-s}$, where $a^s$
is the maximal power of $a$ that is a right divisor of $\LD_{ij}(g)$.
\end{proof}

The following example shows that Proposition ~\ref{onetail} fails without
the $(3,3,m)$-hypothesis.
Let $m_{12},m_{13},m_{23} = 4,3,3$ and write $a,b,c$ for $x_1,x_2,x_3$.
Let $g$ be the group element represented by the geodesic word
$w := babacabab$. Then, since the only two geodesic representatives
of the suffix $cabab$  of $w$ are $cabab$ and $cbaba$, neither of which
starts with $a^{\pm 1}$ or $b^{\pm 1}$, we see that $\LD_{1,2}(g) = baba$.
But, by replacing $aca$ by $cac$, we have $g =_G w_1$ with
$w_1 = babcacbab$, and also $g =_G (baba)c(abab) =_G aba(bcb)aba =_G w_2$
with $w_2 = abacbcaba$, so $\LD_{1,2}(g) =_G v_1a =_G v_2b$, where
$v_1$ and $v_2$ are the maximal $\{a,b\}$-prefixes of $w_1$ and $w_2$,
respectively. A corresponding example can be constructed with
$m_{12} = m$ for any $m$ with $3 \le m < \infty$.

\subsection{Verifying D1 and D2 for the Artin groups in Theorem~\ref{main_rd}}
We assume for the remainder of the paper that $G$ satisfies the hypotheses
of Theorem~\ref{main_rd}: $G$ is an Artin group of large type that
satsifies the $(3,3,m)$-hypothesis.

We must first  define our set $\PP$
of permissible geodesic factorisations $(g_1,g_2)$.
We use the notation $\PP_{ij}$ to denote the set of permissible geodesic
factorisations in the dihedral Artin group on $G(i,j)$, as defined
in Subsection~\ref{sec:dihedral_D1D2}.
Recall that $\LD_{ij}(g)$ and $\RD_{ij}(g)$ were defined in
Proposition~\ref{longestld} as the longest left and right divisors
of $g$ in $G(i,j)$.
Then we define $\PP$ to be the set of geodesic factorisations $(g_1,g_2)$
in $G$ such that for every pair
of generators $x_i,x_j$, $(\RD_{ij}(g_1),\LD_{ij}(g_2)) \in \PP_{ij}$.

Our proof of D1 will involve an inductive argument. To make this work properly,
we have to investigate in what circumstances we can have a geodesic
factorisation $(ag_1,g_2) \in \PP$ for some $a \in A$, with
$(g_1,g_2) \not\in \PP$. 

\begin{lemma}\label{d1lem1}
Suppose that $g \in G$, $a \in A$, $|ag|=|g|+1$,
$(ag_1,g_2) \in \PP(ag)$ but $(g_1,g_2) \not\in \PP(g)$.
\begin{enumerate}
\item[(1)] Then $ag$ has a geodesic representative that begins with a letter
$b \ne a$.
\item[(2)] 
If $ag_1$ does not lie in any 2-generator subgroup of $G$,
then all geodesic representatives
of $\LD'_{ij}(ag)$ (as defined in Proposition~\ref{onetail})
begin with $b$, where $x_i$ and $x_j$ are the names of $a$ and $b$.
\end{enumerate}
\end{lemma}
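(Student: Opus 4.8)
The plan is to exploit the hypothesis $(g_1,g_2)\notin\PP(g)$ together with $(ag_1,g_2)\in\PP(ag)$ to locate a pair of generators $x_i,x_j$ where something changes, and then translate that change into a statement about the left divisor of $ag$. Since $(g_1,g_2)$ is a geodesic factorisation (because $(ag_1,g_2)$ is geodesic and $|ag|=|g|+1$ forces $|g_1|=|ag_1|-1$ with $a$ a left divisor of $ag_1$), the failure $(g_1,g_2)\notin\PP(g)$ must come from the defining condition of $\PP$: there is a pair $x_i,x_j$ with $(\RD_{ij}(g_1),\LD_{ij}(g_2))\notin\PP_{ij}$, whereas $(\RD_{ij}(ag_1),\LD_{ij}(g_2))\in\PP_{ij}$. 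In particular prepending $a$ to $g_1$ must change its maximal right divisor in $G(i,j)$, i.e. $\RD_{ij}(g_1)\ne\RD_{ij}(ag_1)$ as elements, and this can only happen if $a\in\{x_i^{\pm1},x_j^{\pm1}\}$, so the name $x_i$ of $a$ is one of the two generators involved; write $x_j$ for the other. This already pins down which 2-generator subgroup is relevant.

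For part (1): I would argue that if every geodesic representative of $ag$ began with $a$, then every geodesic representative of $ag_1$ (being a prefix of a geodesic representative of $ag$) would begin with $a$, so $\RD_{ij}(ag_1)=\RD_{ij}(a\cdot g_1)$ would be obtained from $\RD_{ij}(g_1)$ simply by prepending $a$ inside $G(i,j)$ — more precisely, the maximal $\{x_i,x_j\}$-prefix would just grow by one letter $a$ on the left. One then checks, using the dihedral analysis of Section~\ref{sec:dihedral_D1D2} (in particular that prepending a single letter to a geodesic factorisation that is already in $\PP_{ij}$, when the prepended letter genuinely extends the word on the left, keeps it in $\PP_{ij}$ — this is essentially the content of Corollary~\ref{geodcor}/Lemma~\ref{d1lem1}'s dihedral ancestor), that $(\RD_{ij}(g_1),\LD_{ij}(g_2))$ would then already have been in $\PP_{ij}$, contradicting $(g_1,g_2)\notin\PP(g)$. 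Hence some geodesic representative of $ag$ begins with a letter $b\ne a$.

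For part (2): now apply Proposition~\ref{twoposs} to $ag$: it has geodesic representatives beginning with $a$ (e.g. $a$ followed by a geodesic for $g$) and beginning with $b\ne a$, so by part~(1) of that proposition $a$ and $b$ have different names, call them $x_i$ and $x_j$ (consistent with the naming above, since $a$ has name $x_i$), and by part~(3) every geodesic representative of $ag$ begins with $a$ or with $b$. Next invoke Proposition~\ref{onetail} for the pair $x_i,x_j$ and the element $ag$. Since $ag_1$ is a proper left divisor of $ag$ that lies in no 2-generator subgroup (hypothesis), in particular $ag$ itself is not in $G(i,j)$ and case~(1) of Proposition~\ref{onetail} cannot force all geodesics through $\LD_{ij}(ag)$ in the trivial way; I would show we are in case~(2), with the distinguished letter of Proposition~\ref{onetail} equal to $a$ — because $\RD_{ij}(ag_1)\ne\RD_{ij}(g_1)$ exhibits two geodesic representatives of $ag$ whose maximal $\{x_i,x_j\}$-prefixes differ, forcing the $\LD'$ phenomenon. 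Then $\LD'_{ij}(ag)$ is obtained from $\LD_{ij}(ag)$ by stripping the maximal power of $a$ on the right, and by the characterisation in Proposition~\ref{onetail}(2) every geodesic representative $w$ of $ag$ has maximal $\{x_i,x_j\}$-prefix $u=_G\LD'_{ij}(ag)a^s$ with $s\ge0$; since some such $w$ begins with $b$ and not with $a$ (as its first letter has name $x_j$, not $x_i$), that $w$ has $s=0$, so $\LD'_{ij}(ag)$ has a geodesic representative beginning with $b$, and then by Proposition~\ref{twoposs}(3) applied to $\LD'_{ij}(ag)$ — whose geodesics cannot all begin with $a$ either, by the same argument — every geodesic representative of $\LD'_{ij}(ag)$ begins with $b$ (it cannot begin with $a$, else prepending reasoning would again contradict minimality of the stripped power $s$).

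\textbf{Main obstacle.} The delicate point is the bookkeeping in part~(1): carefully showing that ``prepending a letter that strictly lengthens the word on the left'' cannot move a dihedral factorisation out of $\PP_{ij}$, since $\PP_{ij}$ has the somewhat fragile $\Delta$-decreasing restriction built in. One has to rule out that the new letter $a$ is exactly what creates (or destroys) a power of $\Delta_{ij}$ straddling the cut in $g_1$; this is where the $(3,3,m)$-hypothesis, via Proposition~\ref{onetail}, does real work, because without uniqueness of the tail letter $a$ one could not control how $\RD_{ij}$ changes. I expect the cleanest route is to phrase part~(1) contrapositively and reduce it to the statement that $\RD_{ij}(ag_1)$, under the assumption that all geodesics of $ag$ start with $a$, equals the element obtained from $\RD_{ij}(g_1)$ by the dihedral one-letter extension, and then cite the dihedral results directly.
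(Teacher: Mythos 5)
There is a genuine gap, and it occurs at the very first step on which everything else rests. From $(\RD_{ij}(ag_1),\LD_{ij}(g_2))\in\PP_{ij}$ and $(\RD_{ij}(g_1),\LD_{ij}(g_2))\notin\PP_{ij}$ you correctly get $\RD_{ij}(ag_1)\ne\RD_{ij}(g_1)$, but your next inference -- that this can only happen when the name of $a$ lies in $\{x_i,x_j\}$ -- is false. Prepending a letter whose name is outside $\{x_i,x_j\}$ can change the maximal right divisor in $G(i,j)$, because the new letter can trigger $\tau$-moves that propagate to the right-hand end of the word: for instance with $m_{13}=3$, the element $x_1x_3$ has trivial $\RD_{12}$ (its unique geodesic is $x_1x_3$ and $x_1x_3x_1^{-1}$ has length $3$), yet $x_3\cdot x_1x_3=_G x_1x_3x_1$ has $x_1$ as a right divisor. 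The paper's proof deliberately keeps the two pairs of generators separate: it writes $\{i',j'\}$ for the pair at which dihedral permissibility changes and $\{i,j\}$ for the names of $a$ and $b$, and the whole substance of part (2) is the comparison of these two (possibly different) pairs, via $h:=\RD_{i'j'}(ag_1)$: one shows $|h|>|\RD_{i'j'}(g_1)|$, that $h$ is divisible by $\Delta_{i'j'}^{\pm 1}$, that no geodesic representative of $ag_1h^{-1}$ can begin with $a$ (else $h$ would be a right divisor of $g_1$), and finally that $\LD'_{ij}(ag)$ is a left divisor of $ag_1h^{-1}$, whence all its geodesic representatives begin with $b$. By identifying $\{i',j'\}$ with $\{i,j\}$ you assume away precisely this difficulty.

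The subsequent steps also do not hold up. In part (1) you appeal to a ``dihedral ancestor'' asserting that prepending a letter preserves membership in $\PP_{ij}$; no such statement exists in the paper (Corollary~\ref{geodcor} is the counting bound D1, not a stability statement), and the stability claim is delicate exactly because of the $\Delta$-decreasing clause in the definition of $\PP_{ij}$. Moreover you never treat the case $ag_1\in G(i,j)$, where the paper argues directly: $(g_1,g_2)$ is $\Delta_{ij}$-decreasing while $(ag_1,g_2)$ is not, so $\Delta_{ij}^{\pm1}$ divides $ag_1$, which immediately yields a geodesic representative of $ag$ starting with a letter other than $a$. In part (2), the claim that the geodesic $w$ of $ag$ beginning with $b$ forces $s=0$ in $u=_G\LD'_{ij}(ag)a^s$ is a non sequitur (this is an equality of group elements and says nothing about the first letter of the word $u$), the assertion that the distinguished letter of Proposition~\ref{onetail} is $a$ is unjustified, and the jump from ``some geodesic representative of $\LD'_{ij}(ag)$ begins with $b$'' to ``all of them do'' is exactly the conclusion to be proved; the paper gets it from the left-divisor relation with $ag_1h^{-1}$ together with Proposition~\ref{twoposs}(3). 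Finally, the hypothesis that $ag_1$ lies in no 2-generator subgroup is used in the paper to separate the pairs $\{i,j\}$ and $\{i',j'\}$, not (as in your sketch) to claim $ag\notin G(i,j)$, which in any case does not follow from it.
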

\begin{proof}
Suppose first that $ag_1$ lies in a 2-generator subgroup $G(i,j)$.
Then $(ag_1,g_2) \in \PP_{ij}$ and $(g_1,g_2) \not \in \PP_{ij}$
implies that the factorisation $(g_1,g_2)$ of $g$ must be
$\Delta_{ij}$-decreasing (as defined in Section~\ref{sec:dihedral_D1D2}),
while the factorisation $(ag_1,g_2)$ of $ag$ is not.
So $ag_1$ is divisible by $\Delta_{ij}$, and conclusion (1) is immediate.

So now suppose that $ag_1$ does not lie in a 2-generator subgroup of $G$.
Then there exist distinct $i',j'$ such that 
$(\RD_{i'j'}(ag_1),\LD_{i'j'}(g_2)) \in \PP_{i'j'}$ and
$(\RD_{i'j'}(g_1),\LD_{i'j'}(g_2)) \not\in \PP_{i'j'}$.
Let $h := \RD_{i'j'}(ag_1)$.
Since, by Proposition~\ref{longestld},
$\RD_{i'j'}(g_1)$ is a right divisor of $h$, we have $|h| > |\RD_{i'j'}(g_1)|$.
We are assuming that $ag_1 \not\in G(i',j')$, so $|h| < |ag_1|$,
and $ag_1 h^{-1}$ cannot have a geodesic representative beginning with $a$,
as that would imply that $h$ was a right divisor of $g_1$.
Hence all geodesic representatives of $ag_1 h^{-1}$ begin with some letter
$b$ with $b \ne a$, and hence $ag$ has a geodesic representative
beginning with $b$, which proves (1).

We observe also that $(\RD_{i'j'}(g_1),\LD_{i'j'}(g_2))$ must be a
$\Delta_{i'j'}$-decreasing factorisation, whereas $(h,\LD_{i'j'}(g_2))$ is not,
so $h$ must be a signed element divisible by $\Delta_{i'j'}^{\pm 1}$. Hence
$h$ has geodesic representatives beginning with letters with names $x_{i'}$
and $x_{j'}$; these are suffices $u',v'$ of geodesic representatives
$u,v$ of $ag_1$. Now if both $u'$ intersects the maximal $\{x_i,x_j\}$-prefix of
$u$ and $v'$ intersects the maximal $\{x_i,x_j\}$-prefix of $v$, we have
$\{i,j\}=\{i',j'\}$ and hence $ag_1 \in G(i,j)$, a contradiction.
This implies that $ag$ has a
geodesic representative in which the maximal $\{x_i,x_j\}$-prefix is a left
divisor of $ag_1 h^{-1}$.
Then, by Proposition~\ref{onetail}, $\LD'_{ij}(ag)$ is a left divisor of
$ag_1 h^{-1}$; hence, as we showed in the proof of (1), all of its geodesic
representatives begin with $b$, and (2) is proved.

\end{proof}

In Corollary~\ref{geodcor}, we proved D1 for 2-generator Artin groups.
Let $P'$ be a polynomial such that D1 holds with $P'$ for each of the
2-generator subgroups $G(i,j)$ of $G$.

\begin{lemma}\label{d1lem2}
Let $g \in G$ with $|g| = m$ and suppose that 
$v,w$ are geodesic representatives of $g$ with $\f{v} \ne \f{w}$.
Suppose that $0 \le k \le m$. Let $\mathcal{D}$ be the set of
left divisors $g_1$ of $g$ in $\PP_l(g)$ of length $k$
for which all geodesic representatives begin with $\f{v}$.
Then $|\mathcal{D}| \le Q(k):=\sum_{j=1}^kP'(j)$.
\end{lemma}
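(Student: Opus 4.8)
The plan is to argue by induction on $k$, reducing to D1 for the two-generator subgroups of $G$, which is already available in the form of Corollary~\ref{geodcor}. Write $a:=\f{v}$ and $b:=\f{w}$, with names $x_i$ and $x_j$. By Proposition~\ref{twoposs} in the version for prefixes, $i\ne j$, every geodesic representative of $g$ begins with $a$ or with $b$, and the maximal $\{x_i,x_j\}$-prefix of each such representative lies in $G(i,j)$. The cases $k=0$ (where $\mathcal{D}=\emptyset$) and $k=1$ (where $\mathcal{D}\subseteq\{a\}$, so $|\mathcal{D}|\le 1$ and we may assume $Q(1)=P'(1)\ge 1$) are immediate, so assume $k\ge 2$ and that the lemma holds for all smaller values of $k$ and all choices of $g,v,w$.

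Fix $g_1\in\mathcal{D}$ and let $h:=\LD_{ij}(g_1)$ be the longest left divisor of $g_1$ lying in $G(i,j)$ (Proposition~\ref{longestld}). Since every geodesic representative of $g_1$ begins with $a$, of name $x_i$, its maximal $\{x_i,x_j\}$-prefix is nonempty, so $h\ne 1$; put $j':=|h|\in\{1,\dots,k\}$ and write $g_1=h\bar g_1$ with $|\bar g_1|=k-j'$. I record two facts. First, by maximality of $h$, no geodesic representative of $\bar g_1$ begins with a letter of name $x_i$ or $x_j$: such a letter $c$ would make $hc\in G(i,j)$ a left divisor of $g_1$ of length $j'+1$. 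Second, $h$ left-divides the fixed element $\LD_{ij}(g)\in G(i,j)$, since $h$ divides $g$, so occurs as a prefix of some geodesic representative of $g$, hence by Proposition~\ref{longestld} divides the maximal $\{x_i,x_j\}$-prefix of that representative, which in turn left-divides $\LD_{ij}(g)$. I would then verify that $h$ is, moreover, a \emph{permissible} left divisor of $\LD_{ij}(g)$, i.e.\ $(h,\,h^{-1}\LD_{ij}(g))\in\PP_{ij}$; this has to be extracted from $g_1\in\PP_l(g)$ and the definition of $\PP$, and uses Proposition~\ref{onetail} to guarantee that the left divisors of $g$ in $G(i,j)$ are rigid enough for the required compatibility to hold. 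Granting this, Corollary~\ref{geodcor} (D1 for $G(i,j)$ with polynomial $P'$) bounds the number of candidates $h$ of length $j'$ by $P'(j')$.

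It then remains to show that $h$ determines $g_1$, that is, that $\bar g_1$ is the unique left divisor of $h^{-1}g$ of length $k-j'$ none of whose geodesic representatives begins with a letter of name $x_i$ or $x_j$. Here Proposition~\ref{onetail}, and hence the $(3,3,m)$-hypothesis, is indispensable: it forces the maximal $\{x_i,x_j\}$-prefix of $h^{-1}g$ to be unique up to a power of a single letter, and forces the letter immediately following it in every geodesic representative; peeling that letter, one reduces to an instance of the lemma for $h^{-1}g$ with strictly smaller length parameter, and the inductive hypothesis then pins down $\bar g_1$. Consequently $g_1\mapsto h$ is injective on $\mathcal{D}$, and summing the bound $P'(j')$ over $j'=1,\dots,k$ gives $|\mathcal{D}|\le\sum_{j=1}^{k}P'(j)=Q(k)$.

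I expect this last step to be the main obstacle. Without the $(3,3,m)$-hypothesis the ``tail letter'' of Proposition~\ref{longestld} need not be unique, as the example following Proposition~\ref{onetail} shows; the map $g_1\mapsto h$ then ceases to be injective and the count genuinely blows up. Taming this ambiguity — ensuring that after peeling off the maximal two-generator left divisor the remainder of $g_1$ is essentially forced — is exactly the role played by Proposition~\ref{onetail}.
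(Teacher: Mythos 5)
There is a genuine gap — in fact two — in the counting scheme you propose. First, your bound of $P'(j')$ on the number of possible prefixes $h=\LD_{ij}(g_1)$ rests on the claim $(h,\,h^{-1}\LD_{ij}(g))\in\PP_{ij}$, which you defer ("I would then verify\dots", "Granting this\dots"). But the set $\PP$ is defined by requiring $(\RD_{i'j'}(g_1),\LD_{i'j'}(g_2))\in\PP_{i'j'}$ for all pairs, i.e.\ it constrains the \emph{right} two-generator divisors of $g_1$; membership $g_1\in\PP_l(g)$ gives no direct control over the left divisor $\LD_{ij}(g_1)$, so this permissibility is not a routine unwinding of definitions, and without some such restriction the number of length-$j'$ left divisors of a signed element of $G(i,j)$ (e.g.\ a power of $\Delta_{ij}$) can grow exponentially — which is precisely why $\PP_{ij}$ was restricted in Section~\ref{dihedral}. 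Second, your injectivity step — that $\bar g_1$ is the \emph{unique} left divisor of $h^{-1}g$ of length $k-j'$ none of whose geodesic representatives begins with a letter of name $x_i$ or $x_j$ — is neither proved nor clearly true: $h^{-1}g$ may well have geodesic representatives beginning with two different letters whose names both lie outside $\{x_i,x_j\}$, and then two distinct such divisors can occur. Moreover, invoking the inductive hypothesis to "pin down" $\bar g_1$ misreads it: the lemma's conclusion is a counting bound $Q(\cdot)$, not uniqueness, so at best each $h$ admits up to $Q(k-j')$ complements and your total becomes $\sum_{j'=1}^{k}P'(j')\,Q(k-j')$, which exceeds $Q(k)$ and does not close the induction as you set it up.

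The paper's proof proceeds differently and avoids counting the dihedral prefix altogether: it inducts on $m=|g|$, splits $\D$ according to whether $g_1\in G(i,j)$, and for $g_1\in G(i,j)$ applies Corollary~\ref{geodcor} to $\LD_{ij}(g)$ to get at most $P'(k)$ elements. For $g_1\notin G(i,j)$, Proposition~\ref{onetail}, combined with the hypothesis that every geodesic representative of $g_1$ begins with $\f{v}$ (so that $g_1$ cannot have $h'a$ as a left divisor, by \cite[Lemma 2.8]{HoltRees}), forces $\LD_{ij}(g_1)$ to equal the \emph{single} element $h'=\LD'_{ij}(g)$ determined by $g$ alone. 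One then checks — again using Proposition~\ref{onetail}(2), and this step has real content — that $g_1':=h'^{-1}g_1$ lies in $\PP_l(h'^{-1}g)$ and that all geodesic representatives of $g_1'$ share one first letter, so the inductive hypothesis applied to the shorter element $h'^{-1}g$ gives at most $Q(k-|h'|)\le Q(k-1)$ possibilities, and $|\D|\le P'(k)+Q(k-1)=Q(k)$. So the role of the $(3,3,m)$-hypothesis is to collapse the two-generator prefix of every $g_1\in\D\setminus G(i,j)$ to one common value, not (as in your sketch) to make the complement unique for each of many prefixes; your instinct that Proposition~\ref{onetail} supplies the rigidity is right, but the consequence actually needed is $\LD_{ij}(g_1)=\LD'_{ij}(g)$, which your argument neither establishes nor exploits.
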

\begin{proof}
The proof is by induction on $m$, the result being vacuously true for $m=1$.
Let the names of $\f{v}$ and $\f{w}$ be $x_i,x_j$,
let $h:= \LD_{ij}(g)$, and let $h' := \LD'_{ij}(g)$, as defined in
Proposition~\ref{onetail}.
Suppose that $g_1 \in \mathcal{D}$,
with $g=g_1g_2$.

If $g_1 \in G(i,j)$, then $g_1 \in \PP_l(h)$. It follows 
immediately from Corollary~\ref{geodcor} that there
are at most $P'(k)$ such choices for $g_1$. 

So now suppose that $g_1 \not \in G(i,j)$.
Since $g_1$ has $\f{v}$ as unique left divisor of length 1, whereas
$g$ 
and hence $h$, by Proposition \ref{longestld},
has more than one such left divisor, $h$ cannot be a left divisor of $g_1$.

Since $g_1 \not \in G(i,j)$, 
Proposition~\ref{onetail} implies that $h'$ is a left divisor of $g_1$, 
and so a proper divisor of $h$, and that
$h=h'a^r$ for some $r>0$,
$a \in \{x_i^{\pm 1}, x_j^{\pm 1}\}$.
But now, since $\LD_{ij}(g_1)$
is certainly a left divisor of $g$, Proposition~\ref{onetail}
also implies that $h=\LD_{ij}(g_1)a^s$, for some $s>0$. We 
deduce that $\LD_{ij}(g_1)=h'a^t$, with $t\geq 0$.
By~\cite[Lemma 2.8]{HoltRees},
$h' a$ has more than one left divisor of length 1,
and so cannot be a left divisor of $g_1$, so $t=0$.
Hence $\LD_{ij}(g)=h'$.

Now $h'^{-1}g$ has a geodesic representative with first letter $a$,
whereas $g_1' := h'^{-1}g_1$ does not. So by Proposition~\ref{twoposs}, all
geodesic representatives of $g_1'$ have the same first letter.

We claim that $g_1' \in \PP_l(h'^{-1} g)$. If not then for some $i',j'$,
$(g'_1,g_2)$ is $\Delta_{i'j'}$-decreasing whereas
$(g_1,g_2)$ is not, and hence $\RD_{i'j'}(g_1)$ is longer than
$\RD_{i'j'}(g'_1)$,
Now suppose that $vw$ is a geodesic representative of $g_1$, with $w$
representing $\RD_{i'j'}(g_1)$. Then by
Proposition~\ref{onetail} (2) some prefix $u$ of $vw$ represents $h'$. 
If $u$ is a prefix of $v$, then $w$ is a suffix of a representative of $g'_1$,
and $\RD_{i'j'}(g_1)$ is a right divisor of $g'_1$, yielding a contradiction.
So $u$ and $w$ must intersect non-trivially, and hence $|\{i,j\} \cap
\{i',j'\}| = 1$. We may suppose that $i=i'$ and $j \ne j'$.
Then since $(g_1,g_2)$ is not $\Delta_{i'j'}$-decreasing, 
the element $\RD_{i'j'}(g_1)$ is divisible by $\Delta_{i'j'}$, and so has a
representative that begins with $x_{j'}^{\pm 1}$; hence
we can find a geodesic representative of $g_1$, and then one of $g$, in
which the longest $\{x_i,x_j\}$-prefix is shorter than $|h'|$, contradicting
Proposition~\ref{onetail} (2). 

Now by the inductive hypothesis applied to $h'^{-1} g$, there are at most
$Q(k-|h'|) \leq Q(k-1)$ possible $g_1'$ and hence at most 
$Q(k-1)$ possible $g_1$ of length $k$ not in $G(i,j)$.  
This bounds the total number of $g_1$ of length $k$
by $Q(k-1)+P'(k)=Q(k)$ for any $k \leq m$, and so completes the inductive step.  
\end{proof}

\begin{proposition}
D1 holds with the polynomial $P_1$ defined by $P_1(n) = 2 \sum_{j=0}^n Q(j)$,
where the set $\PP$ and the polynomial $Q$ are as defined above.
\end{proposition}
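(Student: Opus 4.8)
The plan is to bound $\FF_{\PP,k,l}$, i.e.\ the number of permissible geodesic factorisations $(g_1,g_2)\in\PP(g)$ with $g_1\in C_k$, $g_2\in C_l$, by controlling the number of possible \emph{left divisors} $g_1$ (the right divisor $g_2$ is then determined, since $g_1g_2=g$). By the symmetry assumption $k\le l$, it suffices to bound the number of left divisors of length $k$ in $\PP_l(g)$, so the factor of $2$ in $P_1(n)=2\sum_{j=0}^nQ(j)$ will come from combining the two cases $k\le l$ and $l\le k$.

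First I would fix $g$ and the set $\mathcal{E}$ of left divisors $g_1$ of length $k$ lying in $\PP_l(g)$, and split $\mathcal{E}$ according to the first letter of the geodesic representatives of $g_1$. By Proposition~\ref{twoposs}(3), the set of possible first letters of geodesic representatives of $g$ has size at most $2$: either all geodesic representatives of $g$ begin with the same letter, or there are exactly two possible first letters $\f{v}\ne\f{w}$, with different names. Every $g_1\in\mathcal{E}$ is a left divisor of $g$, so a geodesic word for $g_1$ extends to a geodesic word for $g$; hence every geodesic representative of $g_1$ begins with one of these (at most two) letters. Here I must be slightly careful: a priori a single element $g_1$ could have geodesic representatives starting with both of the two possible letters. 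But in that case Proposition~\ref{twoposs}(1) forces those letters to have different names, and then $g_1$ is pinned down very tightly — in fact this is exactly the situation analysed via $\LD_{ij}$, $\LD'_{ij}$ in Proposition~\ref{onetail} and Lemma~\ref{d1lem2}; such exceptional $g_1$ contribute a bounded number and can be absorbed. Concretely, I would partition $\mathcal{E}=\mathcal{E}_v\cup\mathcal{E}_w$, where $\mathcal{E}_v$ (resp.\ $\mathcal{E}_w$) consists of those $g_1$ all of whose geodesic representatives begin with $\f{v}$ (resp.\ $\f{w}$), noting every element of $\mathcal{E}$ lies in at least one of these by the previous sentence.

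The heart of the argument is then Lemma~\ref{d1lem2}: applied to $g$ with the pair $\f{v}\ne\f{w}$, it gives $|\mathcal{E}_v|\le Q(k)$, and applied with the roles of $v,w$ swapped it gives $|\mathcal{E}_w|\le Q(k)$. Hence $|\mathcal{E}|\le|\mathcal{E}_v|+|\mathcal{E}_w|\le 2Q(k)$. In the degenerate case where all geodesic representatives of $g$ share a first letter, $\mathcal{E}=\mathcal{E}_v$ and a single application of Lemma~\ref{d1lem2} (or rather of Corollary~\ref{geodcor}/the argument in its proof, with no second letter to split off) already gives $|\mathcal{E}|\le Q(k)\le 2Q(k)$. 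Since $k\le l$ here, this bounds $\FF_{\PP,k,l}$ by $2Q(k)=2Q(\min(k,l))$; by symmetry the case $l\le k$ is bounded by $2Q(l)=2Q(\min(k,l))$. In both cases $\FF_{\PP,k,l}\le 2Q(\min(k,l))\le 2\sum_{j=0}^{\min(k,l)}Q(j)=P_1(\min(k,l))$, which is condition D1.

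The main obstacle is the bookkeeping around the two-first-letter dichotomy: one must be sure that \emph{every} relevant left divisor is caught by one of the two applications of Lemma~\ref{d1lem2}, including the delicate boundary cases where $g_1$ itself lies in a $2$-generator subgroup $G(i,j)$ (handled inside Lemma~\ref{d1lem2} via Corollary~\ref{geodcor}) and where $g_1$ has two possible first letters of different names (handled via Propositions~\ref{longestld} and~\ref{onetail}, which is precisely where the $(3,3,m)$-hypothesis enters). Once Lemma~\ref{d1lem2} is invoked as a black box, the remaining step — assembling $|\mathcal{E}|\le 2Q(k)$ and recognising $2\sum_{j=0}^nQ(j)$ as an admissible (non-negative-coefficient, increasing) polynomial bound — is routine. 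I would therefore keep the proof short: state the dichotomy on first letters, apply Lemma~\ref{d1lem2} twice, add, and read off $P_1$.
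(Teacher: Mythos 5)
There is a genuine gap. Your plan replaces the paper's induction on $|g|$ by two direct applications of Lemma~\ref{d1lem2}, and the whole weight then falls on the ``exceptional'' divisors $g_1$ having geodesic representatives that begin with \emph{both} admissible letters; you assert these ``contribute a bounded number and can be absorbed'', but this is exactly the hard part and it is not true that they are few. Any $g_1$ with $\Delta_{ij}^{\pm 1}$ as a left divisor has geodesic representatives beginning with letters of both names, so the exceptional class contains, for instance, all permissible left divisors of the form $\Delta_{ij}h_1$ with $h_1$ a left divisor of $\Delta_{ij}^{-1}g$ of length $k-m_{ij}$ --- their number is essentially the quantity being bounded, for a shorter element, and so can only be controlled recursively. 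This is why the paper's proof is an induction on $|g|$: it splits $\D=\D_1\cup\D_2\cup\D_3$, applies the dihedral bound to $\D_1\subseteq G(i,j)$ and Lemma~\ref{d1lem2} only to $\D_3$, and handles $\D_2$ (which contains all exceptional divisors outside $G(i,j)$) by stripping the letter $\f{w}$ and invoking the inductive hypothesis for $\f{w}^{-1}g$; the recursion $P_1(k)=P_1(k-1)+2Q(k)$ is where the factor $2$ and the summation $2\sum_{j\le n}Q(j)$ come from (not from the symmetry between $k\le l$ and $l\le k$), and your claimed bound $2Q(\min(k,l))$ is strictly stronger than what the paper proves --- the slack you propose to ``absorb'' things into is precisely the cost of the induction you omitted.

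Two further points make the omission of the induction fatal rather than cosmetic. First, stripping a letter does not automatically preserve permissibility: $\PP$ is defined by conditions on all pairs $(\RD_{ij}(g_1),\LD_{ij}(g_2))$, and one can have $(ag_1,g_2)\in\PP(ag)$ with $(g_1,g_2)\notin\PP(g)$. Lemma~\ref{d1lem1} exists exactly to control this, and it (together with the WLOG choice that $\LD'_{ij}(g)$ has a representative beginning with $\f{w}$) is what legitimises the inductive step; your argument never uses Lemma~\ref{d1lem1} because it never recurses, but any repair of the exceptional class will need both. Second, your degenerate case is not covered by your tools: when all geodesic representatives of $g$ begin with the same letter, Lemma~\ref{d1lem2} simply does not apply (its hypothesis requires two representatives of $g$ with distinct first letters), and Corollary~\ref{geodcor} only bounds divisors lying in a dihedral subgroup; the paper again handles this case by stripping the common first letter, using Lemma~\ref{d1lem1}(1), and appealing to the inductive hypothesis. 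So the correct skeleton is the paper's induction; the two-fold application of Lemma~\ref{d1lem2} alone cannot yield D1.
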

 
\begin{proof} Let $g$ be an element of length $m$ and choose $k$ with
$0 \le k \le m$. The proof is by induction on $m$, the case $m=1$ being clear.

If all geodesic representatives of $g$ begin with the same letter, then
the result follows immediately from the inductive hypothesis and
Lemma \ref{d1lem1} (1). So suppose that $g$ has geodesic representatives
$v,w$ with $\f{v} \ne \f{w}$, and let $x_i,x_j$ be the names of
$\f{v},\f{w}$. Assume without loss that $\LD'_{ij}(g)$ has a geodesic
representative beginning with $\f{w}$.

Let $\D$ be the set of elements in $\PP_l(g)$ of length $k$. 
Then $\D = \D_1 \cup \D_2 \cup \D_3$, where $\D_1$ consists of those elements of
$\D$ that lie in $G(i,j)$, $\D_2$ consists of those elements of 
$D \setminus \D_1$ with at least one geodesic representative beginning with
$\f{w}$, and $\D_3$ consists of those elements of $\D \setminus \D_1$ all of
whose geodesic representatives begin with $\f{v}$.
Then $|\D_1| \le Q(k)$ from the 2-generator case
and $|\D_3| \le Q(k)$ by Lemma~\ref{d1lem2}.

Suppose that $g_1 \in \D_2$. If
$\f{w}^{-1} g_1 \not\in \PP_l(\f{w}^{-1} g)$, then by Lemma~\ref{d1lem1} (2),
all geodesic representatives
of $\LD'_{ij}(g)$ begin with $\f{v}$, contrary to assumption; so
$\f{w}^{-1} g_1 \in \PP_l(\f{w}^{-1} g)$, and 
the inductive hypothesis applied to $\f{w}^{-1} g_1$ gives
$|\D_2| \le P_1(k-1)$. 
So $|\D| \le P_1(k-1)+2Q(k)=P_1(k)$ as required.
\end{proof}

The following lemma is of the same type as Lemma~\ref{nclemma1}.
It holds for all Artin groups of large type.

\begin{lemma}\label{nclemma2}
For some $i \ne j$, suppose that no nontrivial element of
$G(i,j)$  is either a left divisor of $g_1$ or a right divisor of $g_2$.
If $g_1g_2 \in G(i,j)$, then $g_1g_2 = 1$.
\end{lemma}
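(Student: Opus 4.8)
The plan is to argue by considering geodesic representatives of $g_1$ and $g_2$ together with the structure of $2$-generator subgroups. Write $g := g_1 g_2 \in G(i,j)$ and suppose for contradiction that $g \ne 1$. Then $g$ is a nontrivial element of $G(i,j)$, so any geodesic representative of $g$ is a nonempty $2$-generator word over $\{x_i^{\pm1}, x_j^{\pm1}\}$. Pick geodesic words $v$ for $g_1$ and $w$ for $g_2$. The product $vw$ represents $g$, but it need not be geodesic; what we know is that $|g_1| + |g_2| \ge |g| \ge 1$, so at least one of $g_1, g_2$ is nontrivial — say $g_1 \ne 1$ (the other case is symmetric).

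\emph{First} I would reduce $vw$ to a geodesic word for $g$ and track what happens to the prefix coming from $v$. Since $v$ is geodesic and $vw$ is freely reduced up to cancellation at the junction, repeated application of Proposition~\ref{ngreduceprop}(2) (rightward length reducing sequences) transforms $vw$ into a geodesic word $w'$ for $g$. Each such step replaces a critical suffix by its $\tau$-image; crucially, the $\tau$-moves and free cancellations involved only ever act on suffixes, so they cannot touch an initial segment of $v$ that survives all the way through. More precisely: because $g \in G(i,j)$ and $g \ne 1$, \emph{every} geodesic representative of $g$ lies in $G(i,j)$ (an element lies in a $2$-generator subgroup iff its geodesics are $2$-generator words, by the results of \cite{HoltRees} underlying Proposition~\ref{longestld}). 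So whatever letters of $v$ are "consumed" by the reduction, the letters of $v$ not consumed must already be letters with names in $\{x_i, x_j\}$.

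\emph{Then} I would pin down the left divisor. Let $u$ be the longest prefix of $v$ all of whose letters have names in $\{x_i, x_j\}$ — i.e. the maximal $\{x_i,x_j\}$-prefix of $v$. If $u$ is all of $v$, then $g_1 \in G(i,j)$, and then $g_1$ is a nontrivial left divisor of $g_1$ lying in $G(i,j)$ — wait, we need a divisor of $g_1$; $g_1$ itself works and is nontrivial, contradicting the hypothesis. So $u$ is a proper prefix of $v$, and the letter of $v$ immediately after $u$ has a name $x_{k}$ with $k \notin \{i,j\}$. Now I claim $u$ is nontrivial. Indeed, by Proposition~\ref{longestld} applied to $g$ (which is in $G(i,j)$ and hence is its own longest $G(i,j)$-divisor from the left), any geodesic word for $g$ has its maximal $\{x_i,x_j\}$-prefix equal, up to a power of a single generator, to $\mathrm{LD}_{ij}(g) = g$; in particular the reduction from $vw$ to a geodesic word for $g$ cannot create $\{x_i,x_j\}$-content out of nothing on the left. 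If $u$ were empty, the first letter of $v$ would have name $x_k$ with $k \notin \{i,j\}$, and this letter — being in a geodesic of $g_1$ and hence appearing at the start of some geodesic of $g$ after reduction — would force a geodesic of $g$ to start with a letter of name $x_k$, contradicting $g \in G(i,j)$. (Here Proposition~\ref{twoposs}, tracking first letters through $\tau$-moves, is the tool.) So $u$ is a nonempty $\{x_i,x_j\}$-word which is a prefix of the geodesic $v$, hence $u_G$ is a nontrivial element of $G(i,j)$ that is a left divisor of $g_1$ — the desired contradiction.

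\emph{The hard part} will be making rigorous the claim that no $G(i,j)$-content can be "created on the left" during reduction — i.e. controlling how the maximal $\{x_i,x_j\}$-prefix evolves under the rightward length-reducing sequences. This is exactly the kind of bookkeeping done in Proposition~\ref{longestld} and Lemma~\ref{d1lem2}, so the cleanest route is probably to invoke Proposition~\ref{longestld} directly: $\mathrm{LD}_{ij}(g_1)$ is a well-defined element of $G(i,j)$, and it is a left divisor of $g_1$ hence (being a left divisor of $g_1$, which is a left divisor of $g = g_1g_2$) a left divisor of $g$; but $\mathrm{LD}_{ij}(g) = g$ since $g \in G(i,j)$, and $\mathrm{LD}_{ij}$ is monotone under left division, forcing $\mathrm{LD}_{ij}(g_1)$ to have length at least... — and then symmetrically $\mathrm{RD}_{ij}(g_2)$ is a right divisor of $g$. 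The hypothesis says both $\mathrm{LD}_{ij}(g_1)$ and $\mathrm{RD}_{ij}(g_2)$ are trivial, forcing $g_1$ to have no $G(i,j)$-content on the left and $g_2$ none on the right; combined with $g_1 g_2 = g \in G(i,j) \setminus \{1\}$, a short argument on first/last letters via Proposition~\ref{twoposs} yields the contradiction. I would present it in this second form, as it avoids explicit manipulation of reduction sequences.
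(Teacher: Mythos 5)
There is a genuine gap, and it sits exactly at the point you flag as ``the hard part''. Your argument needs the first letter of a geodesic $v$ of $g_1$ to survive the reduction of $vw$ and appear at the front of some geodesic of $g=g_1g_2$, i.e.\ that no $\{x_i,x_j\}$-content can be created on the left. But this is not a bookkeeping detail: it is essentially the content of the lemma itself. Proposition~\ref{twoposs} only compares two \emph{geodesic} words representing the same element, whereas $vw$ is not geodesic, and the rightward length reducing sequences of Proposition~\ref{ngreduceprop} can alter or remove every letter of $v$ — indeed in the situation the lemma is really about ($g_2=g_1^{-1}h$ with $h\in G(i,j)$), the whole prefix $v$ is consumed during reduction, so nothing forces a geodesic of $g$ to begin with the name of $\f{v}$. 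Your fallback ``second form'' does not repair this and contains an outright error: in this paper ``left divisor'' means a factor in a \emph{geodesic} factorisation, so $g_1$ is not a left divisor of $g=g_1g_2$ unless $|g|=|g_1|+|g_2|$, and hence $\LD_{ij}(g_1)$ being a left divisor of $g$ does not follow; moreover ``$\LD_{ij}$ is monotone under left division'' is neither proved nor used in a precise way, and the final sentence (``a short argument on first/last letters via Proposition~\ref{twoposs} yields the contradiction'') simply restates the lemma rather than proving it.

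For comparison, the paper's proof works at the junction rather than at the far left end. Writing $h:=g_1g_2\ne 1$, one has $g_1=hg_2^{-1}$, and this factorisation cannot be geodesic, since otherwise $h$ itself would be a nontrivial left divisor of $g_1$ in $G(i,j)$. Taking geodesic words $u,v$ for $h,g_2^{-1}$, one locates the shortest suffix $u'=au''$ of $u$ (with $a\in\{x_i^{\pm1},x_j^{\pm1}\}$) such that $u'v$ is non-geodesic; then $u''v$ is geodesic and has another geodesic representative beginning with $a^{-1}$, while $(u'')_G$ has no such representative. Applying Proposition~\ref{longestld} to that second representative shows $(u'')_G\ne\LD_{ij}((u''v)_G)$, and since the maximal $\{x_i,x_j\}$-prefix of the geodesic $u''v$ is $u''$, this forces a nontrivial power of a letter of $G(i,j)$ to be a left divisor of $(v)_G=g_2^{-1}$, i.e.\ a nontrivial right divisor of $g_2$ in $G(i,j)$ — contradicting the hypothesis. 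It is this precise analysis of the first non-geodesic junction, rather than tracking the leftmost letter through the reduction, that your proposal is missing.
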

\begin{proof} Suppose $g_1g_2 = h$ with $1 \ne h \in G(i,j)$.
Since $g_1$ has no element of $G(i,j)$ as left divisor, $h g_2^{-1}$ cannot
be a geodesic factorisation. Let $u, v$ be geodesic words representing
$h, g_2^{-1}$. Let $u' = a u''$ with $a \in \{x_i^{\pm 1},x_j^{\pm 1} \}$ be the
shortest suffix of $u$ such that $u'v$ is non-geodesic.  Then
$u''v$ has a geodesic representative beginning with $a^{-1}$ whereas
$u''$ does not.
So the maximal $\{x_i,x_j\}$-prefix of that geodesic representative
of $u''v$ cannot represent a left divisor of $(u'')_G$, and hence
by Lemma~\ref{longestld}
$(u'')_G \neq \LD_{ij}((u''v)_G)$.
But then $g_2$ must have
a nontrivial right divisor in $G(i,j)$, contrary to assumption.
\end{proof}

It remains to verify D2.
We start by defining a merging process for elements $g_1 \in C_k$ and
$g_2 \in C_l$ which, as in the 2-generator case, will result in a merger
$(f_1,\Delta_{ij}^r,f_2)$ for some $i,j$ where, for $h_1 := f_1^{-1} g_1$,
$h_2 := g_2 f_2^{-1}$, we have $h_1h_2 = \Delta_{ij}^r$,
$(f_1,h_1) \in \PP(g_1)$ and $(h_2,f_2) \in \PP(g_2)$.

As in the 2-generator case, the merging process proceeds through a series
of steps, starting with  $g_1^{(1)}:=g_1$, $r_1:=0$, $g_2^{(1)}:=g_2$,
and in the $t$-th step we compute $g_1^{(t+1)},r_{t+1},g_2^{(t+1)}$,
by choosing geodesic factorisations
$g_1^{(t+1)}h$ and $h' g_2^{(t+1)}$ of $g_1^{(t)}$ and $g_2^{(t)}$
such that $g_1^{(t+1)} \in \PP_l(g_1)$ and $g_2^{(t+1)} \in \PP_r(g_2)$.
But there is an additional complication in that the $i,j$ in the
term $\Delta_{ij}^{r_t}$ may change during the process.

Recall from Subsection~\ref{sec:dihedral_D1D2} that there are three types
of merging steps, the first being free cancellation, and the other two
involving a specific $\Delta_{ij}$. In general, they are defined in the same way
as in the 2-generator case subject to the condition that, if $r_t \ne 0$, then
we must choose $h,h' \in G(i,j)$. If $r_t = 0$, then there is no such
restriction on free cancellation moves whereas, for the other two types,
we have $h,h' \in G(i,j)$ for the new values of $i,j$.

Again as in the 2-generator case, for $g \in G$, $k,l \in \N$ $k+l=m:=|g|$ we
define the set $S(g,k,l)$ to be the set of all triples
$(f_1,\Delta^r,f_2)$ that arise as mergers of pairs of elements $g_1,g_2$
of lengths $k,l$ and with $g_1g_2= g$.
For each such triple,
we have $\Delta = \Delta_{ij}$ for some $i,j$; if $r=0$, then $\{i,j\}$ is not
necessarily uniquely defined. 
There exist $h_1,h_2 \in G$ with
$h_1h_2 = \Delta^r$ where $g_1=f_1h_1$ and $g_2=h_2f_2$, and the restrictions
on the merging steps described above ensure that
$(f_1,h_1) \in \PP(g_1)$ and $(h_2,f_2) \in \PP(g_2)$.

In order to verify D2, 
we have to show that $|S(g,k,l)|$ is bounded by $P(k)$ for some polynomial $P$.
We decompose $S$ as a disjoint union $S_0 \cup S_1 \cup S_2$, and 
establish polynomial bounds for each of those subsets.

We define $S_0$ to be the subset of elements of $S(g,k,l)$ for which
$r=0$ and $f_1,f_2 \in \langle x_i \rangle$ for some generator $x_i$.
Since $|f_1| \le k$, $f_2 = f_1^{-1}g$, and $x_i$ is determined by $g$,
we have $|S_0| \le 2k+1$. 

The sets $S_1,S_2$ are defined with respect to the elements
$f''_1,\hat{f},f''_2$ that are defined in Proposition~\ref{reduceto2genprop}
below.  We define $S_1$ to be the set of triples in
$S(g,k,l)\setminus S_0$ for which $f''_1\hat{f}f''_2$ is a geodesic
factorisation of $g$, and $S_2$ to be the others.

\begin{proposition}\label{reduceto2genprop}
Let $(f_1, \Delta^r, f_2) \in S(g,k,l) \setminus S_0$.
Then the following is true for some $i \ne j$ with $1 \le i,j \le n$ where,
if $r \ne 0$, $i,j$ are determined by $\Delta = \Delta_{ij}$.
There exist $f''_1, f'_1, f''_2, f'_2, \hat{f} \in G$ such that
\begin{mylist}
\item[(1)] $f'_1,f'_2,\hat{f} \in G(i,j)$, but $f''_1$ has no right divisor in
$G(i,j)$ and $f''_2$ has no left divisor in $G(i,j)$,
\item[(2)] $g = f''_1 \hat{f} f''_2$,
$f_1 = f''_1 f'_1$, $f_2 = f'_2 f''_2$, and
\item[(3)] $(f'_1,\Delta^r,f'_2) \in
S(\hat{f},k',l')$ for some $k' \le k$ and $l' \le l$.
\end{mylist}

Furthermore, $\hat{f}$ does not lie in $\langle x_i \rangle$ for any $i$.
\end{proposition}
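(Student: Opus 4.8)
The plan is to take the obvious choices for the five elements and show that condition~(3) descends from the merging computation witnessing $(f_1,\Delta^r,f_2)\in S(g,k,l)$. First unwind that witness: there are $g_1\in C_k$, $g_2\in C_l$ with $g_1g_2=g$, together with $h_1,h_2$ such that $h_1h_2=\Delta^r$, $g_1=f_1h_1$ and $g_2=h_2f_2$ are geodesic factorisations, $(f_1,h_1)\in\PP(g_1)$, $(h_2,f_2)\in\PP(g_2)$, and hence $g=f_1\Delta^rf_2$; the computation also records a pair $\{i,j\}$ with $\Delta=\Delta_{ij}$, unique when $r\neq0$. Fix such $i\neq j$, and set $f'_1:=\RD_{ij}(f_1)$, $f'_2:=\LD_{ij}(f_2)$ (well defined by Proposition~\ref{longestld}), $f''_1:=f_1(f'_1)^{-1}$, $f''_2:=(f'_2)^{-1}f_2$, and $\hat f:=f'_1\Delta^rf'_2$.

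Parts (1) and (2) are then formal. We have $f'_1,f'_2\in G(i,j)$ by construction and $\hat f\in G(i,j)$ since $\Delta_{ij}\in G(i,j)$; by Proposition~\ref{longestld} the products $f_1=f''_1f'_1$ and $f_2=f'_2f''_2$ are geodesic factorisations, and substituting them into $g=f_1\Delta^rf_2$ gives $g=f''_1\hat ff''_2$; and $f''_1$ has no nontrivial right divisor $v\in G(i,j)$, since a length count would then make $vf'_1$ a right divisor of $f_1$ lying in $G(i,j)$ and strictly longer than $f'_1$, contradicting the maximality of $\RD_{ij}(f_1)$ --- symmetrically for $f''_2$.

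For~(3) put $g'_1:=f'_1h_1$ and $g'_2:=h_2f'_2$, so that $g'_1g'_2=f'_1\Delta^rf'_2=\hat f$. From $|g_1|=|f_1|+|h_1|=|f''_1|+|f'_1|+|h_1|\ge|f''_1|+|g'_1|\ge|g_1|$ every inequality is an equality, so $g_1=f''_1g'_1$ is a geodesic factorisation and $k':=|g'_1|=|f'_1|+|h_1|\le k$; symmetrically $g_2=g'_2f''_2$ is geodesic and $l':=|g'_2|\le l$. It then remains to produce a merging computation from $(g'_1,g'_2)$ whose output is $(f'_1,\Delta^r,f'_2)$. I would run the process on $(g'_1,g'_2)$ so as to mirror the process on $(g_1,g_2)$, setting at stage $t$ the elements $g_1^{\prime(t)}:=(f''_1)^{-1}g_1^{(t)}$ and $g_2^{\prime(t)}:=g_2^{(t)}(f''_2)^{-1}$ (each, by the same length argument, a left resp.\ right divisor of $g'_1$ resp.\ $g'_2$), and show that a move of type (i), (ii) or (iii) legal at $(g_1^{(t)},\Delta^{r_t},g_2^{(t)})$ is legal at $(g_1^{\prime(t)},\Delta^{r_t},g_2^{\prime(t)})$ and yields the analogous triple; and conversely that no move is available at $(f'_1,\Delta^r,f'_2)$, since a cancellation, $\Delta$-adjunction or $\Delta$-extraction move there would involve divisors of $f'_1$ and $f'_2$, which via $f_1=f''_1f'_1$ and $f_2=f'_2f''_2$ are divisors of $f_1$ and $f_2$ of the same form, producing a forbidden move at $(f_1,\Delta^r,f_2)$ and contradicting termination of the original computation.

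The main obstacle is transferring the bookkeeping requirements ``$g_1^{\prime(t+1)}\in\PP_l(g'_1)$'' and ``$g_2^{\prime(t+1)}\in\PP_r(g'_2)$'' in the step above. Because the piece $\eta$ stripped off at any given stage is literally the same word whether one works from $g_1^{(t)}$ or from $g_1^{\prime(t)}$, this reduces to comparing $\RD_{pq}(g_1^{(t)})$ with $\RD_{pq}(g_1^{\prime(t)})$ for every pair $p\neq q$, given the geodesic factorisation $g_1^{(t)}=f''_1g_1^{\prime(t)}$: either this maximal $G(p,q)$-divisor of $g_1^{(t)}$ stays within $g_1^{\prime(t)}$, in which case it equals $\RD_{pq}(g_1^{\prime(t)})$ and the condition $(\RD_{pq}(\,\cdot\,),\LD_{pq}(\eta))\in\PP_{pq}$ is unchanged; or it reaches past the $f''_1$-boundary, which forces $g_1^{\prime(t)}\in G(p,q)$, and one must check directly --- using the description $\PP_{pq}=\PP^1_{pq}\cup\PP^2_{pq}$ together with Lemmas~\ref{p1count} and~\ref{signed_lem} --- that the induced factorisation of the relevant divisor of $\hat f$ again lies in $\PP_{pq}$. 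I expect this case analysis, carried out symmetrically on the right, to be the bulk of the argument. Finally, the closing clause $\hat f\notin\langle x_p\rangle$ needs a short separate argument: if $\hat f=x_p^s$ then $x_p\in\{x_i,x_j\}$, and feeding $\hat f=f'_1\Delta^rf'_2$ through Lemma~\ref{nclemma1} in $G(i,j)$ (for $r=0$), or using that a power of a single generator is not $\Delta_{ij}^{\pm1}$-divisible in $\DA{m_{ij}}$ (for $r\neq0$), one finds that the maximality built into the merging steps forces $r=0$ and $f''_1=f''_2=1$, so that $(f_1,\Delta^r,f_2)=(f'_1,1,f'_2)\in S_0$, against the hypothesis; the same forces $\hat f\neq1$.
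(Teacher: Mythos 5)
Your choices of $f'_1=\RD_{ij}(f_1)$, $f'_2=\LD_{ij}(f_2)$, $f''_1,f''_2$ and $\hat f=f'_1\Delta^rf'_2$ agree with the paper's, and your verification of (1) and (2) is fine, but there are two genuine gaps. First, in the case $r=0$ you simply ``fix'' the pair $\{i,j\}$ recorded by the computation; but when $r=0$ that pair is essentially arbitrary (possibly only cancellation moves occurred), and with an arbitrary choice the final clause can actually fail: merging can terminate with $\RD_{ij}(f_1)=x_p^a$ and $\LD_{ij}(f_2)=x_p^b$ for the \emph{same} generator $x_p$ and $a,b>0$ (no cancellation move is available there, since $x_p^{-1}$ is not a left divisor of $f_2$), and then $\hat f=x_p^{a+b}\in\langle x_p\rangle$ even though $(f_1,1,f_2)\notin S_0$, because $f''_1,f''_2$ may be nontrivial. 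This also breaks your closing argument, where the step ``$w=1$ forces $f''_1=f''_2=1$'' does not follow. The paper avoids this by \emph{re-choosing} $i,j$ in the $r=0$ case, using $(f_1,\Delta^r,f_2)\notin S_0$ to pick $i\ne j$ with $\RD_{ij}(f_1)$ and $\LD_{ij}(f_2)$ not both in one cyclic subgroup (and preferring $m_{ij}<\infty$, which is needed later in Lemma~\ref{geodfaclem2}); the final clause is then deduced from Lemma~\ref{nclemma1} together with the fact that the extra cancellation it would permit lies in $\PP^1$ and so would contradict termination of the original merging.

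Second, your proof of (3) is left incomplete exactly at its crux. You merge $(f'_1h_1,h_2f'_2)$ and propose to mirror the original run step by step, but the legality of each mirrored move requires transferring the bookkeeping conditions $g_1^{(t+1)}\in\PP_l(g_1)$, $g_2^{(t+1)}\in\PP_r(g_2)$ to the primed pair, \emph{and} respecting the priority of move types and the ``as long as possible'' maximality in cancellation moves: a type (ii)/(iii) move performed in the original run is not legal for the primed pair if some cancellation (or a longer cancellation) has become permissible there, since $\PP_l(g'_1)$ and $\PP_l(g_1)$ are different sets. You acknowledge that this transfer is ``the bulk of the argument'' but do not carry it out, and the same transfer is needed in the reverse direction for your termination claim (a move at $(f'_1,\Delta^r,f'_2)$ only contradicts termination of the original run if its $\PP$-conditions lift to $g_1,g_2$). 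The paper sidesteps this wholesale transfer: it projects $h_1,h_2$ into $G(i,j)$ by setting $h'_1=\LD_{ij}(h_1)$, $h'_2=\RD_{ij}(h_2)$, applies Lemma~\ref{nclemma2} to conclude $h'_1h'_2=h_1h_2=\Delta^r$ (the parts of $h_1,h_2$ outside $G(i,j)$ cancel), and then realises $(f'_1,\Delta^r,f'_2)$ as a merger of the two $G(i,j)$-elements $f'_1h'_1$ and $h'_2f'_2$, with $k'=|f'_1h'_1|\le k$, $l'=|h'_2f'_2|\le l$ --- a dihedral merger, which is also what the subsequent bounds on $|S_1|$ and $|S_2|$ via $P^{ij}_2$ actually require. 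Your argument never invokes Lemma~\ref{nclemma2}, and without it (or a completed transfer argument) statement (3) is not established.
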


\begin{proof}\
Since $(f_1, \Delta^r, f_2) \in S(g,k,l)$, it is the merger of
elements $g_1 \in C_k$ and $g_2 \in C_l$, and so there exist
$h_1,h_2 \in G$ with $g_1 = f_1h_1$, $g_2 = h_2f_2$ and $h_1h_2 = \Delta^r$.

Suppose first that $r \ne 0$, and hence that $i,j$ are uniquely specified,
with $m_{ij}< \infty$. 
Define $f'_1:= \RD_{ij}(f_1)$, $h'_1:= \LD_{ij}(h_1)$,
$f'_2:= \LD_{ij}(f_2)$, $h'_2:= \RD_{ij}(h_2)$, $k'=|f'_1h'_1|$,
$l'=|h'_2f'_2|$ and $\hat{f}=f'_1h'_1h'_2f'_2$.
Now Lemma~\ref{nclemma2} applied to $h_1'^{-1}h_1$ and $h_2h_2'^{-1}$
tells us that $h'_1h'_2=h_1h_2 = \Delta^r$.
In other words, during the merging the right divisor 
$(h'_1)^{-1}h_1$ of $h_1$ has cancelled with
the left divisor $h_2(h'_2)^{-1}$ of $h_2$.
We see also that $(f'_1,\Delta^r,f'_2) \in S(\hat{f},k',l')$
is the result of the merging of $f'_1h'_1$ and $h'_2f'_2$ in $G(i,j)$.
Suppose that $\hat{f} \in \langle x_{i'} \rangle$ for $i'=i$ or $j$.
Then the application of Lemma~\ref{nclemma1} to $f'_1h'_1$ and $h'_2f'_2$
shows that free cancellation of $w$ with
$w \in \PP_r(f'_1h'_1)$, $w^{-1} \in \PP_l(h'_2f'_2)$ can be used to
merge  $f'_1h'_1$ and $h'_2f'_2$ to give just a power
of $x_{i'}$, contradicting the fact that $r \neq 0$.
(Here we are using the factorisations in $\PP^1$,
to ensure that such a cancellation would be permissible.)

Suppose instead that $r=0$.  Then $h_1=h_2^{-1}$.
Since we are assuming that $(f_1, \Delta^r, f_2) \not\in S_0$,
we can choose distinct $i,j$ such that $f'_1 := \RD_{ij}(f_1)$ and
$f'_2:= \LD_{ij}(f_2)$ do not both lie in the same cyclic subgroup
$\langle x_{i'} \rangle$ for ${i'}=i$ or $j$.
If possible, we choose $i,j$ such that the vertices corresponding to
$i,j$ in the graph $\Gamma$ defining $G$ are joined by an edge (equivalently,
$m_{ij}< \infty$).  Again we define $h'_1:= \LD_{ij}(h_1)$, $h'_2 :=
\RD_{ij}(h_2)$, $k'=|f'_1h'_1|$, $l'=|h'_2f'_2|$ and $\hat{f}=f'_1h'_1h'_2f'_2$.
Since $h_1=h_2^{-1}$, we have $h'_1= (h'_2)^{-1}$,
so in this case too $(f'_1,\Delta_{ij}^r,f'_2) \in
S(\hat{f},k',l')$ is a merger of $f'_1h'_1$ and $h'_2f'_2$ in $G(i,j)$ and,
by Lemma~\ref{nclemma1}, $f'_1h'_1h'_2f'_2$ does not
lie in a cyclic subgroup  $\langle x_{i'} \rangle$ for $i'=i$ or $j$.
\end{proof}

For triples in $S_1$, we claim that $f''_1 \in \PP_l(g)$. Since $f''_1$
has no right divisor in $G(i,j)$ the only way that the factorisation
$(f''_1,\hat{f}f''_2)$ could be $\Delta$-decreasing for some $\Delta$
would be with $\Delta=\Delta_{ii'}$, where $x_{i'}$ is the name of
both $\l{f''_1}$ and $\f{f''_2}$ and $\hat{f} = x_i^sx_j^t$ for some $s,t>0$.
But then
$\RD_{ii'}(\hat{f}f''_2)$ would have at most two syllables, and so
the factorisation would still be permissible. So the claim is true.

Hence, since $|f''_1| \le k$, the number of choices of $f''_1$ is bounded by
$P_1(k)$.  For given $f''_1$ and a pair $i,j$, we have
$\hat{f} = LD_{ij}({f''_1}^{-1}g)$ and $f''_2 = \hat{f}^{-1}{f''_1}^{-1}g$.
Now $(f'_1,\Delta_{ij}^r,f'_2) \in S(\hat{f},|f'_1h'_1|,|h'_2f'_2|)$,
which we know from the dihedral case has size bounded by
$P^{ij}_2(\min{|f'_1h'_1|,|h'_2f'_2)}) \leq P^{ij}_2(k)$.
So $|S_1| \le P_1(k) \sum_{i,j}P^{ij}_2(k)$.

In order to bound $|S_2|$, we need some further technical lemmas.

\begin{lemma}\label{geodfaclem1}
Suppose that $f''_1\hat{f}f''_2$ is not a geodesic factorisation of $g$.
Then $\hat{f}=a^sb^t$
for some $s,t>0$ where $a,b\in  \{ x_i^{\pm 1},x_j^{\pm 1} \}$ and
$a,b$ have different names.
\end{lemma}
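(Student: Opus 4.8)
The plan is to unpack what it means for $f''_1\hat f f''_2$ to fail to be a geodesic factorisation, and to use the structural facts established about $\hat f$ in Proposition~\ref{reduceto2genprop} together with Proposition~\ref{twoposs}. Recall from Proposition~\ref{reduceto2genprop} that $\hat f\in G(i,j)$, that $g=f''_1\hat f f''_2$ with $f''_1$ having no right divisor in $G(i,j)$ and $f''_2$ no left divisor in $G(i,j)$, and that $\hat f$ does not lie in any cyclic subgroup $\langle x_{i'}\rangle$. First I would observe that since we are in $S_2$, i.e.\ the factorisation is \emph{not} geodesic, while both $f''_1\hat f$ (as a left divisor arising from $f_1h_1=g_1$, modulo $\PP$-bookkeeping) and $\hat f f''_2$ are handled inside $G(i,j)$, the failure of geodesy must occur at one of the two junctions, say the junction between $f''_1$ and $\hat f$. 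Concretely, writing $\hat f$ as a geodesic word in $G(i,j)$ and $f''_1$ as a geodesic word, the concatenation $f''_1\cdot\hat f$ is non-geodesic (or the other junction is; the two cases are symmetric).

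The key step is then to apply Proposition~\ref{ngreduceprop}(2) / Lemma~\ref{geodlem}: if $w\,a$-type cancellation is forced at the $f''_1$–$\hat f$ junction, then a geodesic representative of $f''_1\hat f$ ends in a letter whose name lies in $\{x_i,x_j\}$ but which is not a right divisor of $f''_1$ alone — more precisely, some geodesic representative of $g' := f''_1\hat f$ has a maximal $\{x_i,x_j\}$-suffix that is strictly longer than any $\{x_i,x_j\}$-suffix coming only from $\hat f$, contradicting the hypothesis that $f''_1$ has no right divisor in $G(i,j)$, \emph{unless} $\hat f$ has a very special form. The point is that for the cancellation to "bleed" past $\hat f$ into $f''_1$ without $f''_1$ acquiring a $G(i,j)$ right divisor, the word $\hat f$ must be one on which a $\tau$-move slides letters past it — and by the definition of $\tau$-moves on dihedral critical/over-critical words, the only way a $\{x_i,x_j\}$-word $\hat f$ can be "transparent" in this way from the left is if $\hat f$ is (up to the $\tau$-action) a single alternating block, i.e.\ of the form $a^s b^t$ with $a,b$ of distinct names in $\{x_i^{\pm1},x_j^{\pm1}\}$; any $\hat f$ with at least two syllables of the same name or with genuinely "mixed" structure would force a proper $G(i,j)$ right divisor onto $f''_1$. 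I would also need to use that $s,t>0$: if $\hat f=a^s$ were a single syllable then $\hat f\in\langle x_{i'}\rangle$, contradicting the last sentence of Proposition~\ref{reduceto2genprop}.

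The main obstacle I anticipate is the bookkeeping needed to pin down exactly which junction fails and to rule out the possibility that cancellation happens simultaneously at both junctions, as well as making rigorous the claim that a $\{x_i,x_j\}$-word which is "transparent" to a leftward or rightward critical sequence must be of the form $a^sb^t$. This last point should follow from the explicit description of $\tau$ and of length-reducing $\tau$-moves on dihedral words recalled in Section~\ref{dihedral} (in particular the forms ${}_p(x,y)\xi(z^{-1},t^{-1})_n$ and the positive/negative cases $_m(x,y)\xi$, $_m(x^{-1},y^{-1})\xi$): a word on which such a move acts nontrivially from one end, while leaving no residue on the other side, is exactly one of the short special forms, and after the $\tau$-image is absorbed what can remain as the "core" $\hat f$ is a two-syllable alternating word. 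So the skeleton is: (i) identify the failing junction via Proposition~\ref{ngreduceprop}(2); (ii) use Lemma~\ref{geodlem} and Proposition~\ref{twoposs} to see that non-geodesy at that junction without creating a $G(i,j)$ divisor of $f''_1$ (resp.\ $f''_2$) forces $\hat f$ into the special two-syllable shape; (iii) invoke $\hat f\notin\langle x_{i'}\rangle$ to conclude $s,t>0$ and that $a,b$ have different names.
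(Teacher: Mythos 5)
Your plan has a genuine gap at its first step, and the later ``transparency'' step is asserted rather than proved. The reduction to ``the failure of geodesy must occur at one of the two junctions'' is unjustified and in general false: a product of three geodesic words $u''_1\hat u u''_2$ can be non-geodesic even though both concatenations $u''_1\hat u$ and $\hat u u''_2$ are geodesic, because a rightward length reducing sequence can pass right through $\hat u$, with the eventual free cancellation pairing a letter created to the left of $\hat u$ with one created to the right of it. This is exactly the situation described by Lemma~\ref{geodfaclem2}, where $f''_1 a^s=e_1c^q$ and $b^tf''_2=c^{-q}e_2$ are geodesic factorisations and the cancellation is of $c^q$ against $c^{-q}$ ``across'' $\hat f$; so the failure cannot be localised at a single junction, and your symmetric two-case analysis never gets started. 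Relatedly, your inference that a maximal $\{x_i,x_j\}$-suffix of $f''_1\hat f$ longer than $\hat f$ would contradict ``$f''_1$ has no right divisor in $G(i,j)$'' is not valid: by Proposition~\ref{longestld}, $\RD_{ij}(f''_1\hat f)$ may exceed the visible $\{x_i,x_j\}$-suffix by a power $a^r$ produced by $\tau$-moves, without any nontrivial element of $G(i,j)$ being a right divisor of $f''_1$.

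The second gap is the claim that a $\{x_i,x_j\}$-word through which a reduction can slide must be ``a single alternating block, i.e.\ of the form $a^sb^t$'' --- these are not the same thing (an alternating block ${}_p(x,y)$ has $p$ syllables), and the actual content of the lemma is precisely the bound of at most two syllables, which is where all the work lies. The paper's argument supplies what is missing: assume $\hat u$ has at least three syllables and apply Proposition~\ref{ngreduceprop}(2) to obtain a rightward length reducing sequence. Since the last letter of $u''_1$ and the first letter of $u''_2$ have names outside $\{x_i,x_j\}$ (this is how the divisor hypotheses on $f''_1,f''_2$ enter) and critical words are 2-generator, the first critical subword cannot start to the left of $\hat u$: the straddling $\tau$-move would leave a letter $c$ of outside name followed by at least two syllables with names $x_i,x_j$, so the sequence would halt with no free reduction. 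Nor (after replacing $\hat u$ by another geodesic representative if necessary) can the first move lie wholly inside $\hat u$; hence the first critical subword starts in the last syllable $a^s$ of $\hat u$, so $a u''_2$ is non-geodesic by Corollary~\ref{twoposscor}, and then $f''_2$ has a geodesic representative beginning with $a^{-1}$, contradicting that $f''_2$ has no left divisor in $G(i,j)$. Your final step, deducing $s,t>0$ and distinct names from $\hat f\in G(i,j)$ and $\hat f\notin\langle x_{i'}\rangle$, is fine and matches how the two-syllable bound yields the stated form.
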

\begin{proof}
Let $u''_1, \hat{u}, u''_2$ be geodesic words representing $f''_1,\hat{f},
f''_2$, respectively. Suppose (for a contradiction) that $\hat{u}$ has at
least three syllables. Since $u''_1\hat{u} u''_2$ is not geodesic, we can
by Proposition \ref{ngreduceprop} (2) apply a rightward length reducing
sequence to a prefix of it containing $u''_1$.

If the left-hand end of the
first critical subword in the sequence were to the left of $\hat{u}$, then some
$\tau$-move in the sequence would
replace a letter in the first syllable of
$\hat{u}$ by a letter $c$ with name not equal to $x_i$ or $x_j$. But since
$c$ would then be followed by at least two syllables with names $x_i$ and
$x_j$, $c$ 
would not be the leftmost letter in a critical subword, and so this would be
the last $\tau$-move in the sequence, and would not provoke a free reduction.  
So the left hand end of the first
critical subword must be  within $\hat{u}$.

We may assume that the first $\tau$-move of the sequence is not completely
within $\hat{u}$, since otherwise we could just
replace $\hat{u}$ by the result of this move. So the first critical subword
overlaps the right hand end of $\hat{u}$ and hence has its left hand end in
the final syllable, $a^s$ say, of $\hat{u}$. But then $a^su''_2$ is not
geodesic and so, by Corollary \ref{twoposscor}, neither is $au''_2$.
But then $f''_2$ has a geodesic representative beginning with $a^{-1}$,
contradicting $f_2 = f'_2 f''_2$ with $f'_2 = \LD_{ij} (f_2)$. 
\end{proof}

\begin{lemma}\label{geodfaclem2}
Suppose that $\hat{f} = a^s b^t$ for some $s,t > 0$, where
$a,b$ have distinct names $x_i,x_j$,
and that $f''_1\hat{f}f''_2$ is not a geodesic factorisation of $g$.
Then there exists a letter $c$ with name $x_{i'}$ not equal to $x_i$ or $x_j$,
$q>0$, and $e_1,e_2 \in G$, such that  $f_1''a^s = e_1 c^q$ and
$b^t f_2'' = c^{-q} e_2$,
where $e_1c^q$, $c^{-q}e_2$ and  $e_1 e_2$ are all geodesic factorisations.
Furthermore, the vertices corresponding to $x_i,x_j$ in the graph $\Gamma$
are joined by an edge; that is $m_{ij}< \infty$.

\end{lemma}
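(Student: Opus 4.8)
The plan is to imitate the proof of Lemma~\ref{geodfaclem1}, carrying the analysis of the rightward length reducing sequence one $\tau$-move further. Fix geodesic words $u''_1$, $a^sb^t$ and $u''_2$ representing $f''_1$, $\hat f$ and $f''_2$. Since $f''_1$ has no right divisor in $G(i,j)$ — in particular no right divisor that is a power of $x_i$ or of $x_j$ — the last letter of $u''_1$ has name outside $\{x_i,x_j\}$, and symmetrically for the first letter of $u''_2$; as $a$ and $b$ have distinct names, $W:=u''_1a^sb^tu''_2$ is freely reduced, and $W$ is non-geodesic exactly because $f''_1\hat ff''_2$ is not a geodesic factorisation of $g$. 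I would also record two geodesity facts: $u''_1a$ is geodesic (otherwise $a^{-1}$ would be a right divisor of $f''_1$ in $G(i,j)$), so $u''_1a^s$ is geodesic by Corollary~\ref{twoposscor}; and $u''_1a^sb$ is geodesic, since otherwise $f''_1a^s$ would have a geodesic word ending in $b^{-1}$ and then, by Proposition~\ref{twoposs}(2), the maximal $2$-generator suffix of $u''_1a^s$ would lie in $G(i,j)$ and, as it genuinely involves both names $x_i$ and $x_j$, would extend strictly into $u''_1$, giving $f''_1$ a nontrivial right divisor in $G(i,j)$; one more application of Corollary~\ref{twoposscor} then shows that $u''_1a^sb^t$, and symmetrically $a^sb^tu''_2$, are geodesic.

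Next I would apply Proposition~\ref{ngreduceprop}(2) to the shortest non-geodesic prefix $v_1$ of $W$. As $u''_1a^sb^t$ is geodesic, $v_1$ has the form $u''_1a^sb^t\rho c$ with $\rho c$ a nonempty prefix of $u''_2$, and $v_1$ admits a rightward length reducing sequence whose final step is the free cancellation of its last letter. Look at the first $\tau$-move of this sequence whose critical subword $C$ is not contained in $u''_1$ (all earlier moves merely rewrite $u''_1$ into another geodesic word for $f''_1$). This $C$ overlaps the syllable $a^s$; it cannot be contained in $a^sb^t$, since a word with only two syllables over two generators has $p(w)+n(w)\le 2<m_{ij}$ and so is never critical; and it cannot lie in $G(i,j)$, for then $C$ would be a nonempty suffix $\sigma$ of the current version of $u''_1$, all of whose letters have names in $\{x_i,x_j\}$, followed by a prefix of $a^s$, and $(\sigma)_G\ne 1$ would be a right divisor of $f''_1$ in $G(i,j)$. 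Hence $C$ lies in $G(i,i')$ for some $i'\ne i,j$, so $m_{ii'}<\infty$, and the $\tau$-move on $C$ replaces letters of $a^s$ by a power of a letter $c$ of name $x_{i'}$. Tracking the remaining moves exactly as in the worked example following Proposition~\ref{ngreduceprop}, this power of $c$ is transported rightward past $b^t$ into $u''_2$, where the terminal free cancellation removes it; reading off the word just before that cancellation gives factorisations $f''_1a^s=e_1c^q$ and $b^tf''_2=c^{-q}e_2$ with $q>0$, which are geodesic, and then $g=f''_1a^sb^tf''_2=e_1e_2$.

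Finally I would check $m_{ij}<\infty$: the configuration just obtained shows $\RD_{ii'}(f_1)$ involves both $x_i$ and $x_{i'}$ — a tail $c^q$ preceded by the letters of $a^s$ or by the final letter of $u''_1$ — and symmetrically for $\LD_{ii'}(f_2)$, so the pair $(i,i')$, being joined by an edge of $\Gamma$, would have been an admissible choice in the selection made in the proof of Proposition~\ref{reduceto2genprop}; since an edge-pair is preferred there whenever one exists, $m_{ij}<\infty$. The main obstacle is the bookkeeping in the second paragraph: one must verify that the rightward critical sequence transports exactly one power of $c$ with no residual length loss — so that $e_1e_2$ is genuinely geodesic — and that the same exponent $q$ emerges on the $f''_1a^s$ and $b^tf''_2$ sides, with $q$ maximal via Corollary~\ref{twoposscor}. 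This is the syllable-by-syllable tracking of $\tau$-moves illustrated by the worked example of the previous subsection, and it, rather than any isolated conceptual point, is where the real work lies.
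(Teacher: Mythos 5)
There is a genuine gap, and it sits at the heart of the lemma: the claim that $e_1e_2$ is a geodesic factorisation. You obtain $c$, $e_1c^q$ and $c^{-q}e_2$ essentially as the paper does (your first two steps, and your argument for $m_{ij}<\infty$ via the choice made in Proposition~\ref{reduceto2genprop}, are close to the intended ones), but you then assert that ``reading off the word just before the cancellation'' gives $g=e_1e_2$ \emph{geodesically}, relegating the verification to ``syllable-by-syllable tracking''. That verification is not bookkeeping: a single rightward length reducing sequence shortens the word by exactly $2$, and after it the resulting word may still be non-geodesic, with further reductions passing across the junction between (what you hope are) $e_1$ and $e_2$; in that case $|g|<|e_1|+|e_2|$ and the conclusion fails. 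Equivalently, the statement $|g|=|f''_1|+s+t+|f''_2|-2q$ with $q$ the \emph{maximal} common power of $c$ is a substantive claim, and tracking the first reducing sequence cannot by itself deliver it.

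This is exactly the point where the paper invokes the $(3,3,m)$-hypothesis, which your proposal never uses. Having shown $m_{ij}<\infty$, the hypothesis forces at least one of $m_{ii'},m_{ji'}$ to be at least $4$. Applying Propositions~\ref{twoposs} and~\ref{longestld} to $f''_1a^s=e_1c^q$ and $b^tf''_2=c^{-q}e_2$, one of $\RD_{ii'}(f''_1a^s)$ (ending in $c^q$) and $\LD_{ji'}(b^tf''_2)$ (beginning with $c^{-q}$) then has at least four syllables, so either $e_1$ has a geodesic representative whose last three syllables alternate between powers of $a$ and $c$, or $e_2$ has one whose first three syllables alternate between powers of $b$ and $c$. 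No rightward length reducing sequence can pass through such a three-syllable two-generator block, and this is what shows the concatenation of the chosen representatives of $e_1$ and $e_2$ is geodesic. Without an argument of this kind (and the hypothesis it rests on), the ``no residual length loss'' step of your plan is unjustified; the failure of Proposition~\ref{onetail} without the $(3,3,m)$-hypothesis indicates that such junction phenomena are precisely where the hypothesis is needed. (A secondary, fixable point: define $q$ directly as the maximal integer with both $f''_1a^s=e_1c^q$ and $b^tf''_2=c^{-q}e_2$ geodesic factorisations, rather than extracting it from the reducing sequence; the sequence only guarantees one letter $c^{\pm1}$ on each side.)
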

\begin{proof}
Let $u''_1,u''_2$ be geodesic representatives of $f''_1,f''_2$.
Using similar reasoning to that of the proof of the previous lemma, we see
that a rightward length reducing sequence applied to
the word $u''_1 a^s b^t u''_2$ would consist of a sequence of moves that
replaced $u''_1 a^s$ by a word ending in a letter $c$ with name $x_{i'}$ not
equal to $x_i$ or $x_j$, followed by a length reducing sequence applied to
$c b^t u''_2$. But then $b^t u''_2$ would have a geodesic representative
starting with $c^{-1}$. Let $q$ be maximal such that $f''_1 a^s$ and
$b^t f''_2$ have geodesic factorisations $e_1 c^q$ and $c^{-q} e_2$
respectively.

If $m_{ij}$ were infinite, then we would have $r=0$ in
Proposition~\ref{reduceto2genprop}, in which case we could and would have
chosen one of the pairs $i,i'$ or $j,i'$, depending on which of $m_{ii'}$ and
$m_{ji'}$ was finite rather than $i,j$
in the proof of that proposition. 
So $m_{ij}< \infty$, which proves the final assertion
in the lemma. Hence the hypothesis of Theorem~\ref{main_rd} implies that at
least one of the $m_{ii'}$ and $m_{ji'}$ is at least 4.

Let $h_1$ and $h_2$ be the group elements represented by
$e_1 c^q =_G u''_1 a^s$ and $c^{-q} e_2 =_G b^t u''_2$, respectively.
Applying Propositions \ref{twoposs} and \ref{longestld} to these elements,
we see that $\RD_{ii'}(h_1)$ and $\LD_{ji'}(h_2)$ are represented by
$w_1$ and $w_2$, ending in $c^q$ and beginning with $c^{-q}$, respectively.
Since $w_1$  and $w_2$ are not the unique geodesic representatives
of $\RD_{ii'}(h)$ and $\LD_{ji'}(h_2)$, the above condition on the 
edge labels implies that at least one of $w_1$ and $w_2$ has at least
four syllables.  Hence either $e_1$ has a geodesic representative
$v_1$ in which at least the final three syllables are powers
of $a$ and $c$, or $e_2$ has a geodesic representative $v_2$ in
which at least the first three syllables are powers $b$ and $c$.
We see that
no rightward length reducing sequence can pass through this middle section,
and hence $v_1 v_2$ is geodesic, which completes the proof.
\end{proof}

By Lemma~\ref{geodfaclem1}, each element $(f_1,\Delta_{ij}^r,f_2)$ of $S_2$
determines $f'',\hat{f},f''_2$ for which
the hypotheses and conclusion of Lemma~\ref{geodfaclem2} hold.
Since $|g| \ge l-k$, we must have $q \le k$, so there are
at most $2nk$ possibilities for $c^q$. So assume that $c^q$ is fixed.
Unfortunately we have no bound on $|e_1|$ as a polynomial in $k$.  Let
$e_1 = e''_1 e'_1$ with $e'_1 = \RD_{ii'}(e_1)$. Then $|e''_1| < |f''_1| < k$
and $e''_1 \in \PP_l(e_1)$, so there are at most $P_1(k)$ possible $e''_1$.

We claim that  the triple $(f_1,\Delta_{ij}^r,f_2)$ is uniquely
determined by $g$, $c^q$ and $e''_1$.
To see this note that $\LD_{ii'}({e_1''}^{-1} g) = e_1 c^{-t}$ for some
$t \ge 0$ and, since $(e_1,c^q)$ is a geodesic factorisation,
$e_1$ does not have $c^{-1}$ as right divisor. Hence, given
$g$, $c^q$ and $e''_1$, $e'_1$ is determined as
$\LD_{ii'}({e_1''}^{-1} g) c^t$, where $c^{-t}$ is the longest right
divisor of $\LD_{ii'}({e_1''}^{-1} g)$ of that form.
Then $e_1$ and $f''_1$ are determined as $e''_1 e'_1$ and $e_1 c^q a^{-s}$,
where $a^s$ is the highest power of $a$ that is a right divisor of $e_1c^q$.
Then, as in the previous case, we have $\hat{f} = LD_{ij}({f_1''}^{-1}g)$ and
$f''_2 = \hat{f}^{-1}{f_1''}^{-1}g$, which establishes the claim.

So the number of triples $(f_1,\Delta_{ij}^r,f_2) \in S(g,k,l)$ that satisfy
the conclusion of Lemma~\ref{geodfaclem2}, and hence also $|S_2|$, is bounded
by $2nkP_1(k)\sum_{i,j} P^{ij}_2(k)$. 

This completes the proof that Artin groups satisfying the hypotheses of
Theorem~\ref{main_rd} satisfy D1 and D2,
and hence also the proof of Theorem~\ref{main_rd}.

\section*{Acknowledgments}
We would like to thank the referee of an earlier version of this paper, whose
query as to whether or not Lafforgue's results would actually apply
to the groups under study led us to address this question and hence add the
second of the two main results.

The first author was partially supported by the Marie Curie Reintegration
Grant 230889, the third author by grants from the  Mathematics Departments
of the Universities of Fribourg and Neuch\^atel, Switzerland, and
by UK Engineering and Physical Sciences Research Council grant EP/F014945/1,
Quantum Computation: Foundations,
Security, Cryptography and Group Theory.

\noindent Addresses:

\medskip
\noindent Laura Ciobanu,\\
Mathematics Department,
University of Neuch\^atel,
Rue Emile Argand 11,\\
CH-2000 Neuch\^atel, Switzerland.\\
email: Laura.Ciobanu@unine.ch

\medskip
\noindent Derek F. Holt,\\
Mathematics Institute,
University of Warwick,
Coventry CV4 7AL,
UK.\\
email: D.F.Holt@warwick.ac.uk

\medskip
\noindent Sarah Rees,\\
School of Mathematics and Statistics,
University of Newcastle,
Newcastle\\ NE1 7RU,
UK.\\
email: Sarah.Rees@newcastle.ac.uk
\end{document}